\renewcommand\subparagraph{\@startsection {subparagraph}{5}{\z@ }{3.25ex \@plus 1ex
 \@minus .2ex}{-1em}{\normalfont \normalsize \bfseries }}%
\DeclareFontFamily{U}{MnSymbolC}{}
\DeclareSymbolFont{MnSyC}{U}{MnSymbolC}{m}{n}
\DeclareFontShape{U}{MnSymbolC}{m}{n}{
    <-6>  MnSymbolC5
   <6-7>  MnSymbolC6
   <7-8>  MnSymbolC7
   <8-9>  MnSymbolC8
   <9-10> MnSymbolC9
  <10-12> MnSymbolC10
  <12->   MnSymbolC12}{}
\DeclareMathSymbol{\intprod}{\mathbin}{MnSyC}{'270}
\newcommand{\N}{\mathbb{N}}
\newcommand{\Z}{\mathbb{Z}} 
\newcommand{\R}{\mathbb{R}}
\newcommand{\C}{\mathbb{C}} 
\newcommand{\T}{\mathbb{T}} 
\renewcommand{\div}{\operatorname{div}}
\newcommand{\eg}[1][~]{\textit{e.g.#1}}
\newcommand{\ie}[1][~]{\textit{i.e.#1}}
\newtheorem{thm}{Theorem}
\newtheorem{cor}[thm]{Corollary}
\newtheorem{prop}[thm]{Proposition}
\newtheorem{lemma}[thm]{Lemma}
\newtheorem{dfn}[thm]{Definition}
\newcommand{\proof}{\par\noindent\textit{Proof.} }%{\paragraph{\it Proof.}}
\newcommand{\cqfd}{\hfill\rule{1ex}{1ex}}
\newtheorem{ex}[thm]{Example}
\newtheorem{claim}[thm]{Claim}
\newtheorem{preremark}[thm]{Remark}
\newenvironment{rmk}%
  {\begin{preremark}\upshape}{\hfill{\small $\square$}\end{preremark}}
\newcommand{\norme}[2][]{\| #2 \|_{#1}}
\def\p{\partial}
\def\io{{\infty}}
\def\curl{\operatorname{curl}}
\def\range{\operatorname{ran}}
\def\Id{\operatorname{Id}}
\def\dive{\operatorname{div}}
\def\N{\mathbb N}
\def\Z{\mathbb Z}
\def\R{\mathbb R}
\def\C{\mathbb C}
\DeclareMathOperator{\rot}{\mathbf{C}}
\def\poscal#1#2{\langle#1,#2\rangle}
\def\norm#1{\Vert#1\Vert}
\def\val#1{\vert#1\vert}
\def\l2{L^2(\R^{n})}
\def\L2{L^2(\R^{2n})}
\def\tr#1{{^t}#1}
\let \dis=\displaystyle
\let \dis=\displaystyle
\def\mat22#1#2#3#4{\begin{pmatrix}#1&#2\\ #3&#4\end{pmatrix}}
\def\mattre#1#2#3{\begin{pmatrix}#1\\ #2\\#3\end{pmatrix}}
\def\XXint#1#2#3{{\setbox0=\hbox{$#1{#2#3}{\int}$}
     \vcenter{\hbox{$#2#3$}}\kern-.5\wd0}}
\def\beq{\begin{equation}}
\def\eeq{\end{equation}}
\def\tr#1{{^t}\!\!#1}
\title{On some properties of the curl operator\\and their consequences for the Navier-Stokes system}
\author{
Nicolas Lerner\\[1ex]
\small Sorbonne Universit\'e (formerly Paris VI)\\
\small Institut de Mathématiques de Jussieu, UMR 7586\\
\small Campus Pierre et Marie Curie, 4 Place Jussieu\\
\small 75252 Paris cedex 05\\
\small\texttt{nicolas.lerner@imj-prg.fr}
\and
Fran\c{c}ois Vigneron\\[1ex]
\small Université de Reims Champagne-Ardenne\\
\small Laboratoire de Mathématiques de Reims, UMR 9008\\
\small Moulin de la Housse, BP 1039\\
\small 51687 Reims cedex 2\\
\small\texttt{francois.vigneron@univ-reims.fr}
}
\date{December 25, 2021}
\begin{document}
\maketitle

\begin{center}\small\sl
In honor of our friend Professor Chaojiang Xu, on the occasion of his 65th birthday.
\end{center}

\begin{abstract}
We investigate some geometric properties of the $\curl$ operator, based on its diagonalization
and its expression as a non-local symmetry of the pseudo-derivative $(-\Delta)^{1/2}$ among divergence-free vector fields
with finite energy. In this context, we introduce the notion of spin-definite fields, \ie eigenvectors of $(-\Delta)^{-1/2}\curl$.
The two spin-definite components of a general 3D incompressible flow untangle the right-handed motion from the left-handed one. 

Having observed that the non-linearity of Navier-Stokes has the structure of a cross-product
and its weak (distributional) form is a determinant that involves the vorticity, the velocity and a test function,
we revisit the conservation of energy and the balance of helicity in a geometrical fashion. We show that in the case
of a finite-time blow-up, both spin-definite components of the flow will explose simultaneously and with equal rates,
\ie singularities in 3D are the result of a conflict of spin, which is impossible in the poorer geometry of 2D flows.
We investigate the role of the local and non-local determinants $$\int_0^T\int_{\R^3}\det(\curl u, u, (-\Delta)^{\theta} u)$$
and their spin-definite counterparts, which drive the enstrophy and, more generally, are responsible for
the regularity of the flow and the emergence of singularities or quasi-singularities.
As such, they are at the core of turbulence phenomena.\\[1ex]
\textbf{Keywords:} Navier-Stokes, Vorticity, Hydrodynamic spin, Critical determinants, Turbulence.\\[1ex]
\textbf{MSC primary:} 35Q30, 35B06.\\
\textbf{MSC secondary:}  76D05, 76F02.
\end{abstract}

\bigskip

The initial value problem for the Navier-Stokes system for incompressible fluids is usually written as
\begin{equation}\label{nsieq-}
\begin{cases}
 \dis\frac{\p u}{\p t}+(u\cdot \nabla) u- \nu\Delta   u=-\nabla p,\quad \dive u=0, 
 \\
 u_{\vert t=0}=u_{0}.
\end{cases}
\end{equation}
Here $u=u(t,x)$ is a time-dependent vector field on $\R^{3}$, the viscosity $\nu$ is a positive parameter
(expressed in Stokes, \ie ${\tt L}^{2}\,{\tt T}^{-1}$) and $u_{0}$ is a given divergence-free vector field. 

\medskip
In 1934, Leray \cite{MR1555394} proved the existence of global weak solutions in $L^\infty_t L^2_x \cap L^2_t \dot{H}^1_x$.
In 3D, the question of their uniqueness remains elusive and is intimately connected to deciding whether the weak
solutions enjoy a higher regularity.
Well-posedness in various function spaces has been studied thoroughly and culminates in Koch and Tataru's result
\cite{MR1808843} if the data~$u_0$~is small in the largest (\ie less constraining) 
function space (called $\operatorname{BMO}^{-1}$) that is scale and translation invariant and on which
the heat flow remains locally uniformly in $L^2_{t,x}$.

The set of singular times may or not be empty, but it is a compact subset of~$\R_+$, whose Hausdorff measure of
dimension $1/2$ is zero.
The celebrated theorem of Caffarelli, Kohn and Nirenberg \cite{CKN82} ensures that singularities form a subset of space-time
whose parabolic Hausdorff measure of dimension $1$ vanishes too (see also Arnold and Craig~\cite{AC2010}).

Note that equation~\eqref{nsieq-} corresponds to an Eulerian point of view, \ie it describes the movement of the fluid in a fixed reference frame. The natural question of tracking individual fluid particles, \ie the Lagrangian point of view, is equivalent to the existence
of a flow $\xi:\R_+\times \R^d\to\R^d$
\begin{equation}
\frac{\partial \xi}{\partial t} = u(t,\xi(t,x)), \qquad \xi(0,x)=x.
\end{equation}
The volume preserving map $\xi(t,\cdot)$ tracks the deformations of the fluid (see \eg\cite{FGT1985} and \cite{MR1354312}).

\medskip
For a comprehensive covering of most of the classical theory of Navier-Stokes, we refer the reader to, \eg[,]
Lemarié's book \cite{LR2016} and the references therein.
Berselli's recent book~\cite{Berselli2021} offers an interesting complement that blends theoretical results on the energy fluxes with
numerical methods and turbulence theory. Davidson's book~\cite{Davidson2015} provides valuable physical insight on the latter subject.

\section{Introduction}

In the next few lines, we will present a small subset of these classical results,
not necessarily in chronological order, to provide some background on the arduous question of the regularity of the solutions.
Then we will expose our own contribution, which is a new geometric approach based on the diagonalization of the $\curl$ operator.

\subsection{Classical regularity theory near a singular event}

The behavior of smooth solutions of the Navier-Stokes equation as they approach a (still conjectural) finite
blow-up time has been studied very carefully. 

\smallskip
For the $\dot{H}^1$ semi-norm, a precise rate has been known since Leray~\cite{MR1555394}:
if the first time of singularity~$T^\ast$ of a smooth solution is finite, then
\begin{equation}\label{lerayBlowupBootstrap}
\norme[L^2]{\nabla u(t)} \geq \frac{C}{(T^\ast-t)^{1/4}}\cdotp
\end{equation}
This inequality is the immediate consequence of a bootsrap of the local well-posedness result for data in~$H^1$,
when one takes into account that if $u_0$ blows up at time $T^\ast$, then $u(t)$ will blow up at time $T^\ast-t$.
Similarly, for any $0<\gamma<1/2$ and $p=3/(1-2\gamma)$:
\begin{equation}\label{sobolevBlowupBootstrap}
\norme[\dot{H}^{\frac{1}{2}+2\gamma}]{u(t)} \gtrsim \norme[L^p]{u} \geq \frac{C_\gamma}{(T^\ast-t)^{\gamma}}\cdotp
\end{equation}
The endpoint $L^\infty$ is admissible with a rate $\gamma=1/2$.

\smallskip
Thanks to the energy inequality and the Sobolev embedding, any Leray solution enjoys
a uniform control in $L^\infty_t L^2_x \cap L^2_t L^6_x$, so in particular in $L^4_t L^3_x$ and $L^{2+2/3}_t L^4_x$. 
Various authors including Foias, Guillop\'e \& Temam \cite{FGT1981}, Chemin \cite{C2004}, Cordoba, de la Llave \& Fefferman~\cite{CFL2004}
observed independently that the amplitude of Leray solutions is controlled in $L^1_{\text{loc}}(\R_+ ; L^\infty(\R^3))$ \ie
\begin{equation}\label{noSquirt}
\forall T>0, \qquad \int_0^T \norme[L^\infty]{u(t)} dt <\infty.
\end{equation}
This result is now known as the absence of squirt singularities (see \eg\cite[\S 11.6]{LR2016}).

\pagebreak
In \cite{V2010}, Vasseur proved a family of estimates in various function spaces as long as the solution remains smooth,
one of which reads
\begin{equation}\label{vasseurUnif}
\int_0^{T^\ast} \sum_{|k|\leq 2} \norme[L^1]{\nabla^k u(t)} dt
 \leq C(1+\norme[L^2]{u_0}^{4})
\end{equation}
with a constant $C$ that does not depend on the solution $u$, nor on the blow-up time $T^\ast$. 
Interpolation between~\eqref{noSquirt} and~\eqref{vasseurUnif} ensures that
\begin{equation}\label{roughBound}
\forall p\in[1,\infty], \qquad \int_0^{T^\ast} \norme[L^p]{u(t)} dt <\infty.
\end{equation}
Using the energy inequality~\eqref{roughBound} can obviously be improved to $L^q([0,T^\ast);L^p)$ with either $q=1-\frac{3}{p}$ if $p\geq 6$, or $\frac{2}{q}+\frac{3}{p}=\frac{3}{2}$ if $2\leq p\leq 6$, or $q=\frac{p}{2-p}$ if $1\leq p\leq 2$.

\medskip
These universal qualitative upper bounds are in sharp contrast with the lower bounds, which generalize~\eqref{lerayBlowupBootstrap}-\eqref{sobolevBlowupBootstrap} in the case of a finite time blow-up.
Regarding the supremum norm, \eqref{sobolevBlowupBootstrap} implies
\begin{equation}\label{LinfGrowth}
\int_0^{T^\ast} \norme[L^\infty]{u(t)}^2 dt =+\infty.
\end{equation}
In very rough terms and unless the amplitude oscillates wildly near $T^\ast$, the behavior depicted by~\eqref{noSquirt}
and~\eqref{LinfGrowth} suggests that
\[
\frac{C_\infty}{(T^\ast-t)^{1/2}} \leq \norme[L^\infty]{u(t)} \leq \frac{C(u)}{T^\ast-t}\cdotp
\]
To ``thicken'' the peaks of amplitude, one may look at uniform bounds for the heat flow, \ie Besov norms of negative regularity index.
The quantitative lower bound of Chemin \& Gallagher \cite{CG2019}
\begin{equation}
\norme[\dot{B}^{-1+2\gamma}_{\infty,\infty}]{u(t)} 
= \sup_{\tau>0} \tau^{\frac{1}{2}-\gamma} \norme[L^\infty]{e^{\tau\Delta}u(t)}
\geq \frac{C_\gamma}{(T^\ast- t)^\gamma}
\qquad (0<\gamma<1/2)
\end{equation}
is coherent with the previous intuition when $\gamma = 1/2$. The second endpoint ($\gamma=0$) requires
special care because it is also the end of the chain of critical scale-invariant spaces
\[
\dot{H}^{1/2} %= \dot{B}^{1/2}_{2,2} \subset \dot{B}^{0}_{3,2}
\subset L^3 %\subset \dot{B}^{0}_{3,3} \subset \dot{B}^{-1}_{\infty,2}
\subset \operatorname{BMO}^{-1} \subset \dot{B}^{-1}_{\infty,\infty}
\]
\ie Galilean invariant spaces $X\subset \mathcal{S}'(\R^3)$ whose norm satisfies $\norme[X]{\lambda u(\lambda x)} = \norme[X]{u(x)}$
(see Meyer~\cite{M1997} and also~\cite[Prop. 1.2]{CG2019}).
Kato's~\cite{Kato1965} and Escauriaza, Seregin \& Sver\'ak's \cite{ISS2003} theorems state that
\[
\underset{t\to T^\ast}{\operatorname{lim\, inf}}\, \norme[L^3]{u(t)} \geq c_0
\qquad\text{and}\qquad
\underset{t\to T^\ast}{\operatorname{lim\, sup}}\, \norme[L^3]{u(t)} = +\infty.
\]
Later, Seregin \cite{S2012} proved that there are no major fluctuations of the $L^3$ norm near the blow-up time, \ie
\begin{equation}\label{seregin}
\lim_{t\to T^\ast} \norme[L^3]{u(t)} = +\infty
\end{equation}
and a quantitative polylogarithmic rate was obtained recently by Tao~\cite{Tao2019}:
\begin{equation}\label{tao}
\underset{t\to T^\ast}{\operatorname{lim sup}}\, \frac{\norme[L^3]{u(t)}}{ \left( \log\log\log \frac{1}{T^\ast-t} \right)^c} = +\infty.
\end{equation}
However, a simple scaling argument (see~\cite[\S5.1]{BarkerPHD}) forces the inferior limit (over all solutions)
to be zero in~\eqref{tao}
and in any similar estimate with a diverging rate, \ie fluctuations
of the $L^3$ norm will sometimes be visible at this time-scale.
Soon afterwards, Barker \& Prange~\cite{BarkerPrange2020} investigated the possibility of reducing the length of the polylogarithm.

The well known Ladyzhenskaya-Prodi-Serrin condition reads:
\begin{equation}\label{LPS}
\int_0^{T^\ast} \norme[L^p]{u(t)}^q dt = +\infty \qquad\text{for}\qquad \frac{2}{q}+\frac{3}{p}=1, \quad p>3.
\end{equation}
Note that~\eqref{seregin} corresponds to the endpoint $p=3$, while~\eqref{LinfGrowth} matches $p=\infty$.
This second endpoint was investigated by Kozono \& Taniuchi \cite{KT00}, who even generalized it
to the (larger) BMO space.

The blow-up of scale-invariant Besov norms of negative regularity index was obtained by Gallagher,
Koch \& Planchon~\cite{GKP2016}. This lower bound implies that most supercritical norms (\ie Galilean invariant space-time function
spaces $Y$ such that $\norme[Y]{\lambda u(\lambda^2 t,\lambda x)} \leq C \lambda^{1-\gamma} \norme[Y]{u(t,x)}$ with $\gamma<1$)
will also blow up.
For the subtle behavior at the endpoint among critical spaces, \ie $\dot{B}^{-1}_{\infty,\infty}$, we refer to Cheskidov \& Shvydkoy~\cite{CS2010a} and Ohkitani~\cite{O2017}.

\medskip
Concerning spaces of higher regularity, we have known since Kato that
\begin{equation}\label{cheapBKM}
\int_0^{T^\ast} \norme[L^\infty]{\nabla u} = +\infty.
\end{equation}
The celebrated Beale-Kato-Majda criterion \cite{MR763762}, \cite{MR1953068} reads
\begin{equation}\label{BKMintro}
\int_0^{T^\ast} \norme[L^\infty]{\curl u} = +\infty
\end{equation}
and various generalizations in more involved function spaces are possible, \eg\cite{KT00},~\cite{MR1953068}.
We will briefly present Cheskidov \& Shvydkoy's \cite{CS2010b} variant of this criterion (see equation~\eqref{CS_BKM_variant} below).

\medskip
Alternatively, \textbf{each of the identities~\eqref{LinfGrowth}, \eqref{seregin}, \eqref{LPS}, \eqref{cheapBKM}, \eqref{BKMintro}
can also be stated as a regularity criterion}.
If the left-hand side integral is finite on some time interval $[0,T]$, then the corresponding solution remains regular up to
and including at time $T$, \ie one has $T^\ast >T$.
Numerical investigations of quasi-singularities are still underway, \eg by Sverak \& al.~\cite{JiaSverak}, \cite{GuillodSverak}, who possibly hint at the existence of actual singularities, or by Protas \& al.~\cite{AP2017}, \cite{KYP2020}, \cite{KP2021}, who seek flows
that maximize the growth of various norms.

\bigskip
Let us close this first panorama by a word of caution.
The heat equation is justly considered as the archetype of a well behaved parabolic regularizing model. Its solution
is indeed obtained by convolution with a Gaussian kernel
\begin{equation}
e^{t\Delta} u_0 = \int_{\R^3} u_0(x-\sqrt{t} y) W(y) dy \qquad\text{with}\qquad W(y) = (4\pi)^{-3/2} e^{-y^2/4}.
\end{equation}
However, as shown by Tychonov~\cite{tycho}, \cite{MR1998564}, this solution is not the only one if one fails to
restrict the growth of $u$ at infinity to, \eg[,] $O(e^{c x^2})$. For any $\alpha\in\R$,
the following function is a smooth (but not tempered) solution of the heat equation that coincides with $u_0$ at $t=0$:
\begin{equation}
u(t,x)+ \alpha \sum_{n=0}^\infty P_n\left({\textstyle \frac{1}{t}}\right)  e^{-1/t^2} H(t)\frac{x^{2n}}{(2n)!}\cdotp
\end{equation}
Here $u=e^{t\Delta} u_0$, $H(t)$ is the Heaviside function, $P_0=1$ and $P_{n+1}(z)=2z^3 P_n(z)+P_n'(z)$
are the polynomials involved in the computation of the $n^{\text{th}}$ derivative of $e^{-1/t^2}$.
While this type of instability may seem far from the physical range of validity of hydrodynamical models, it remains instructive.
See also \cite{MR2597507}.

\subsection{Geometric regularity theory near a singular event}

All the criteria that we have 
mentioned up to now are obviously isotropic and do not rely on any geometric structure of the flow.
There have been a few remarkable attempts to take into account the geometric nature of the Navier-Stokes equation
and we shall now present them briefly.

\medskip
A striking example of the importance of the geometry for hydrodynamics
is \textbf{turbulence}, where radically anisotropic structures (vortex filaments and pancakes)
play a central role~\cite{Davidson2015}. This observation suggests   that \textbf{the most fundamental and universal behavior of fluids
is a microlocal cascade}. However, it is equally important (and more feasible)
to describe the consequences of these interactions at intermediary scales, for example by estimating
the growth of norms of geometric quantities.

\bigskip
An important step in this direction was achieved by Constantin \& Fefferman~\cite{CF1993},~\cite{C1994},
who studied the direction of the vorticity $\omega=\curl u$, \ie[:]
\begin{equation}
\xi(t,x) = \frac{\omega(t,x)}{|\omega(t,x)|} \in \mathbb{S}^2.
\end{equation}
Using $\xi$ as a multiplier in the equation of vorticity (see~\eqref{eq_vorticity} below),
they established that
\begin{equation}
\bigl(\p_{t}+u\cdot\nabla-\nu \Delta\bigr)(\val \omega)+
\nu\val \omega\val{\nabla \xi}^{2}
=\poscal{(\omega\cdot \nabla) u}{\xi}_{\R^{3}}.
\end{equation}
Integrating over $[0,t]\times\Omega$ for $\Omega=\R^3$ or $\T^3$ leads (at first for smooth solutions,
see~\cite[eq. 20]{CF1993}) to the identity
\[
\int_\Omega \val{\omega(t,x)} dx
+\nu\int_{0}^{t}\int_\Omega \val{\omega(t,x)}\val{\nabla \xi(t',x)}^{2} dx dt'
=\int_\Omega \val{\omega(0,x)} dx +\int_{0}^{t}\int_\Omega \poscal{(\omega\cdot \nabla) u}{\xi}_{\R^{3}} dx dt'.
\]
The following geometric estimate follows immediately (using~\eqref{energy_balance}):
\begin{equation}
\int_{\Omega} \val{\omega(t,x)} dx+\nu\int_{0}^{t}\int_{\Omega} \val{\omega(t,x)}\val{\nabla \xi(t',x)}^{2} dx dt'
\le 
\norm{\omega(0)}_{L^{1}}+\frac{1}{2\nu}\left(\norm{u(0)}_{L^{2}}^{2} - \norm{u(t)}_{L^{2}}^{2}\right).
\end{equation}
To the best of our knowledge, this global $L^\infty_t L^1_x$ estimate is the only known a priori bound on the
vorticity that holds for any Leray solution, apart from the obvious $L^2_{t}L^2_{x}$ bound that follows from the energy inequality.
This result illustrates how \textbf{a local alignment in the direction of the vorticity can  deplete the nonlinearity}.
An interpretation that connects this estimate to turbulence is given in~\cite{C1994}.

\bigskip
Another notable geometric result is the one from Vasseur~\cite{V2009} on the direction of the velocity field.
This result is specific to the 3D case and can be stated as follows. If a solution blows-up at a finite time $T^\ast$,
then:
\begin{equation}
\int_0^{T^\ast} \left\| \div \left(\frac{u}{|u|}\right) \right\|_{L^p}^q dt = +\infty
\qquad\text{for}\qquad  \frac{2}{q}+\frac{3}{p}\leq \frac{1}{2}, \quad q\geq4, \quad p\geq 6.
\end{equation}
Conversely, a control of the norm implies the regularity of the solution.
This criterion is based on the incompressibility of the flow and the identity
\[
|u|\div\frac{u}{|u|} = - \Big(\frac{u}{|u|}\cdot \nabla\Big) |u|.
\]
It means that \textbf{the growth of $|u|$ along the
streamlines is linked to the divergence of the direction of $u$}.
In particular, the kinetic energy $|u|^2$ can only increase along the streamlines if they
are bent and produce some divergence in the direction of the velocity.

\bigskip
Among anisotropic criteria, let us also mention a recent result by Chemin, Gallagher \& Zhang~\cite{CGZ2018}
that investigates the possibility of detecting a singularity through one component only. If $u$ is a smooth solution
that presents a blow-up at a finite time $T^\ast$, then
\begin{equation}
\inf_{\sigma\in\mathbb{S}^2} \int_0^{T^\ast} \norme[{\dot{H}^{1/2+2/p}}]{u(t)\cdot\sigma}^p dt = +\infty
\qquad\text{for}\qquad p\geq 2,
\end{equation}
which means that all components will be affected.
They also show that
\begin{equation}
\inf_{\sigma\in\mathbb{S}^2} \sup_{t'>t}
\norme[\dot{H}^{1/2}]{u(t')\cdot\sigma} \geq C \log^{-1/2}\left(e+\frac{\norme[L^2]{u(t)}^4}{T^\ast -t}\right).
\end{equation}
The fact that the right-hand side vanishes is coherent with the remark that follows~\eqref{tao}.

\subsection{Structure of this article and summary of our results} 

Our article is structured as follows.

\smallskip
In section~\S\ref{par:geom_curl}, we expose some geometric properties of the $\curl$ operator.
The non-local diagonalization of the $\curl$ (see Lemma~\ref{lem13} and Remark~\ref{rem13}) establishes a geometrical link with
the pseudo-derivative, \ie$|D|=(-\Delta)^{1/2}$ : both operators are images of one another by a certain symmetry of the subset
of $L^2$ formed by the divergence-free vector fields. This property leads us to introduce (Definition~\ref{def:spin_definite}) the notion of \textit{spin-definite vector field}, \ie divergence-free fields such that
\[
\curl u = \pm |D| u.
\]
The end of section~\S\ref{par:geom_curl}
is dedicated to the study of such fields.
In layman's terms, fields with positive spin display an exclusive right-handedness motion at all scales, while fields with negative
spin are their chiral image in a mirror. Spin-definite fields are build as superpositions of planar Beltrami waves~\eqref{planarBeltrami2}
with independent directions of propagation and various frequencies.
Any divergence-free field can be decomposed in a unique way as the sum
of two fields with respectively a positive and a negative spin. A few numerical simulations illustrate the importance of this notion
for the description of vortex filaments.

\smallskip
The next key observation is that the non-linearity of Navier-Stokes has a cross-product structure and its weak (\ie distributional) form is a determinant (see equations~\eqref{NS_CurlForm} and \eqref{nsi++} below).
In section~\S\ref{par:twoIntegrals}, we use this geometric approach
to revisit the two classical conservation laws for Navier-Stokes, \ie the balance of energy and the balance of helicity. While the
former is constitutional of the definition of the Leray space~$L^\infty_t L^2_x \cap L^2_t \dot{H}^1_x$, we expose the latter (see equations~\eqref{conservation_NpNm} and~\eqref{helicity_balance}) as a conservation law in
\[
L^\infty_t \dot{H}^{1/2}_x \cap L^2_t \dot{H}^{3/2}_x
\]
for the spin-definite components of the flow. Our main Theorem,~\ref{mainThm1}, can be restated as follows.
\begin{thm}
In the case of a finite-time blow-up of Navier-Stokes, both of the spin-definite components of the flow
will explode simultaneously and with equal rates.
\end{thm}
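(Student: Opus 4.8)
The plan is to work with the two spin-definite components $u = u_+ + u_-$ obtained from the orthogonal decomposition of Definition~\ref{def:spin_definite}, where $\curl u_\pm = \pm |D| u_\pm$, and to track the two scalar quantities $N_\pm(t) = \norme[\dot H^{1/2}]{u_\pm(t)}^2$. Projecting the Navier-Stokes system onto each spin eigenspace (the projectors commute with $|D|$, with $e^{t\nu\Delta}$ and with the Leray projector, so the linear part is diagonal), the only coupling occurs through the nonlinear term. Exploiting the cross-product structure $(u\cdot\nabla)u + \nabla p = \curl u \times u + \nabla(\ldots)$ advertised before equation~\eqref{NS_CurlForm}, one rewrites the nonlinearity tested against $u_\pm$ as a determinant $\det(\curl u, u, u_\pm)$ integrated in space, which by helicity-type algebra splits into a "self" contribution and a "cross" contribution mixing $u_+$ and $u_-$. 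I would first establish the energy-type identities
\begin{equation*}
\frac{d}{dt} N_\pm(t) + 2\nu \norme[\dot H^{3/2}]{u_\pm(t)}^2 = \mathcal{D}_\pm(t),
\end{equation*}
identifying $\mathcal{D}_\pm$ with the spin-definite critical determinants mentioned in the abstract, and showing that the problematic part of $\mathcal{D}_+$ equals (up to sign and harmless terms) the problematic part of $\mathcal{D}_-$ — this antisymmetry is the geometric heart of the statement and is exactly what the phrase "a conflict of spin" refers to.

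Next I would turn the blow-up hypothesis into a statement about $N_\pm$. By the critical-space blow-up results recalled in the introduction (e.g. the $\dot H^{1/2}\subset L^3$ chain together with \eqref{seregin}), a finite-time singularity at $T^\ast$ forces $\norme[\dot H^{1/2}]{u(t)} = N_+(t)+N_-(t) \to \infty$, hence $\max(N_+,N_-)\to\infty$. The task is to upgrade "$\max\to\infty$" to "both $\to\infty$ with comparable rates". Suppose, for contradiction, that $N_-$ stays bounded (or grows strictly slower) on a sequence $t_k\to T^\ast$ while $N_+\to\infty$. Then $u_-$ would remain bounded in the subcritical-relative-to-$u_+$ sense, and I would feed this imbalance back into the differential inequality for $N_+$: the self-interaction determinant $\mathcal{D}_+^{\mathrm{self}}$ can be controlled because a spin-definite field has $\curl u_+$ and $u_+$ "parallel in frequency", which depletes the determinant $\det(\curl u_+, u_+, u_+)$ pointwise in a quantifiable way (this is the analogue, in our geometry, of the Constantin–Fefferman alignment mechanism), while the cross term is controlled by the bounded factor $u_-$. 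The conclusion would be that $N_+$ itself cannot blow up alone — contradiction. Symmetrically one rules out $N_+$ bounded with $N_-\to\infty$. Quantifying the rates then follows by running the same comparison with $N_\mp/N_\pm$ bounded away from $0$ and $\infty$: the differential inequalities for $N_+$ and $N_-$ become, modulo the shared determinant, mirror images, so $\liminf (N_+/N_-)$ and $\limsup(N_+/N_-)$ are both finite and positive, which is the asserted equality of rates.

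The main obstacle I anticipate is the control of the self-interaction determinant $\int \det(\curl u_\pm, u_\pm, u_\pm)$ in the right (critical, $\dot H^{1/2}$–$\dot H^{3/2}$) topology. Pointwise, $\det(\curl u_+, u_+, u_+)$ vanishes identically if $\curl u_+ \parallel u_+$ (a true Beltrami field), but a generic spin-definite field only satisfies $\curl u_+ = |D| u_+$, which is an alignment "in the sense of Fourier multipliers", not a pointwise collinearity; so the depletion is genuine but must be extracted by a commutator / paraproduct estimate rather than by a pointwise identity. The cleanest route is probably to pass to the frequency side, write $u_\pm$ as a superposition of the planar Beltrami waves \eqref{planarBeltrami2}, and estimate the trilinear form on triples of such waves, showing that the worst resonant interactions are precisely those that couple opposite spins. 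If that frequency-localized trilinear estimate goes through with constants independent of the profile, the rest of the argument is soft: it is bookkeeping with Grönwall-type inequalities on $N_+$ and $N_-$ and the already-cited blow-up criteria.
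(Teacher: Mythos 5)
Your first paragraph contains the key fact but you then walk away from it. Once you know that the two critical determinants driving $N_+$ and $N_-$ coincide, you need no contradiction argument, no depletion estimate, and no frequency-localized trilinear analysis: the coincidence is \emph{exact and pointwise}, because testing the nonlinearity against $\rot u=\rot_{+}u-\rot_{-}u$ gives $\det(\rot u,u,\rot u)=0$, hence $\det(\rot u,u,\rot_{+}u)=\det(\rot u,u,\rot_{-}u)$ identically. Integrating the two energy identities in time and subtracting yields the conservation law $N_{+}(u,t)-N_{-}(u,t)=N_{+}(u,0)-N_{-}(u,0)$ for smooth solutions --- this is nothing but the balance of helicity written in spin coordinates, with the cumulative dissipation $2\nu\int_0^t\|\rot_{\pm}^{3/2}u\|_{L^2}^2\,dt'$ included in $N_\pm$. (Note that the theorem's $N_\pm$ are \emph{not} your instantaneous $\|u_\pm(t)\|_{\dot H^{1/2}}^2$; with your definition the difference is not conserved, since $\tfrac{d}{dt}(N_+-N_-)$ picks up the unsigned term $-2\nu(\|u_+\|_{\dot H^{3/2}}^2-\|u_-\|_{\dot H^{3/2}}^2)$.) The proof is then three lines: the difference is bounded by $C_0=\bigl|\int_{\R^3}\omega_0\cdot u_0\bigr|$ for all $t<T^\ast$; the sum $N_++N_-$, which controls the critical norm $L^\infty_t\dot H^{1/2}\cap L^2_t\dot H^{3/2}$, diverges in limsup by the critical-norm blow-up result (the paper uses \cite{GKP2016}; Seregin's $L^3$ result plus $\dot H^{1/2}\subset L^3$, as you suggest, also works); a bounded difference with a divergent sum forces both terms to diverge along a common sequence with ratio tending to $1$.

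The second half of your proposal is where the genuine gap lies. The contradiction argument (``suppose $N_-$ stays bounded while $N_+\to\infty$, feed this into a differential inequality for $N_+$'') requires closing a Gr\"onwall estimate for $\|u_+\|_{\dot H^{1/2}}$ at \emph{critical} regularity, which is precisely what one cannot do: the trilinear form has no smallness at critical scaling, and ``the cross term is controlled by the bounded factor $u_-$'' is not a quantitative estimate --- a bound on $\|u_-\|_{\dot H^{1/2}}$ does not control $\int\det(\rot u_-,u_+,|D|u_+)$ by the dissipation of $u_+$. If such a profile-independent depletion/paraproduct estimate existed, it would yield regularity statements far beyond the theorem. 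Incidentally, your stated ``main obstacle'' is a non-issue: for a spin-positive field one has $\curl u_+=|D|u_+$ as functions, so the self-interaction determinant $\det(\curl u_+,u_+,|D|u_+)$ has two equal columns and vanishes pointwise; no commutator or Beltrami-wave decomposition is needed. The terms you should have worried about --- the cross interactions --- are exactly the ones that the identity $\mathcal D_+=\mathcal D_-$ disposes of without estimating anything.
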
 In simple terms, this means that \textit{singularities can only appear as the result of an unresolved
conflict of spin that escalated out of control}.
Proposition~\ref{balanceLeray}, Theorem~\ref{lem21} and~\ref{thmMain3} quantify how an imbalance between the two spins
actually prevents singularities.
In subsection~\S\ref{par:2D}, we explain how \textit{the poorer geometry of 2D flows} either enforces
the victory of one direction of rotation over the other or lets the viscosity dissolve the attempted conflict,
while the richer 3D geometry allows for the possibility of an escalation of conflicting spins.
In subsection~\ref{par:onsager_anew}, we also briefly discuss the recent developments regarding Onsager's conjecture for
the balance of energy and its counterpart for the balance of helicity.

\smallskip
Section~\S\ref{par:crit_det} pursues the geometric investigation of the weak form of the non-linearity, \ie critical determinants.
Applying this point of view to study the enstrophy produces a proof of the regularity of 2D flows based on the identity
\[
\int_{\R^3} \det(\curl u, u, -\Delta u)=0,
\]
which is valid if $u$ is a 2D divergence-free field embedded in 3D space; note that the integrand is not identically zero
and that the cancellation is the result of a space average.
More generally, we investigate (Propositions~\ref{detC12} and~\ref{detD2th} and the identity~\eqref{eq:rot_theta}) how the sign of
\[
\int_{0}^{t} \int_{\R^{3}}  \det(\curl u, u, |D|^{2\theta}u)dx dt'
\]
relates to the growth of the Sobolev norm $\dot{H}^{\theta}$ of the spin-definite components of the flow.
This analysis suggests that even though the definition of regularity is obviously local, its control
in the case of Navier-Stokes flows will likely involve \textit{non-local estimates}.
We also obtain a stability estimate among Leray solutions that has a geometric form:
\[
\norm{u_1(t)-u_2(t)}_{L^{2}}^{2} \leq \norm{u_1(0)-u_2(0)}_{L^{2}}^{2} \exp\left(
\int_0^T \frac{\norm{u_1\times u_2}_{L^2}^2}{\norm{u_1-u_2}_{L^2}^2} \right)
\]
and a variant of the Beale-Kato-Majda criterion.

\smallskip
For the convenience of the reader, Appendix~\ref{appendix} recalls the geometric proof of some vector calculus identities
whose direct computational proofs in coordinates would be non-trivial.

\section{Geometric properties of the $\curl$ operator}\label{par:geom_curl}

In this section, we collect some classical facts related to vorticity and we introduce notations, in particular
the signed decomposition of $\curl$ in~\S\ref{par:curl_signed} and the associated notion of \textit{spin}
of a 3D divergence-free field, that will be used throughout the article.

\medskip
We use the following definition for the Fourier transform on $\R^n$~:
\begin{equation}\label{fourie}
\hat u(\xi) = (2\pi)^{-n/2}\int_{\R^n} e^{-i x\cdot \xi} u(x) dx, \qquad
u(x) = (2\pi)^{-n/2}\int_{\R^n} e^{i x\cdot \xi} \hat u(\xi) d\xi.
\end{equation}
This definition provides a unitary transformation in $L^{2}(\R^{n})$.
The operator $D=-i\nabla$ satisfies
\[
\widehat{D^{\alpha}u}(\xi)=\xi^{\alpha}\hat u(\xi).
\]
The operator $|D|=(-\Delta)^{1/2}$ has symbol $|\xi|$.
We focus exclusively on the three dimensional case, \ie $n=3$,
except in the brief subsection~\S\ref{par:2D}.

\subsection{The curl operator}
\label{par:curl}

We use the notation $\rot=\curl$.
It is a Fourier multiplier of symbol
\begin{equation}\label{symbol_rot}
\rot(\xi) = \begin{pmatrix}
0&-i\xi_{3}&i\xi_{2}\\
i\xi_{3}&0&-i\xi_{1}\\
-i\xi_{2}&i\xi_{1}&0
\end{pmatrix},
\end{equation}
which is the matrix of $\eta\mapsto i\xi\times \eta$ seen as an endomorphism of the Hermitian space $\C^{3}$.
Obviously
\begin{equation}\label{}
\rot(\xi)^{*}=\overline{\tr \ { \rot(\xi)}}=\rot(\xi),
\end{equation}
which implies that the  $\curl$ is (formally) self-adjoint over $L^2(\R^3)$.
One has
\[
\rot(\xi)^{2}=-\!\!\begin{pmatrix}
0&-\xi_{3}&\xi_{2}\\
\xi_{3}&0&-\xi_{1}\\
-\xi_{2}&\xi_{1}&0
\end{pmatrix}\!\!\!\!
\begin{pmatrix}
0&-\xi_{3}&\xi_{2}\\
\xi_{3}&0&-\xi_{1}\\
-\xi_{2}&\xi_{1}&0
\end{pmatrix}\!\!=\!\!
\begin{pmatrix}
\val \xi^{2}-\xi_{1}^{2}&-\xi_{1}\xi_{2}&-\xi_{1}\xi_{3}\\
-\xi_{1}\xi_{2}&\val \xi^{2}-\xi_{2}^{2}&-\xi_{2}\xi_{3}\\
-\xi_{1}\xi_{3}&-\xi_{3}\xi_{2}&\val \xi^{2}-\xi_{3}^{2}\\
\end{pmatrix},
\]
so that 
\[
\rot(\xi)^{2}=\val \xi^{2}\Id-\xi\otimes \xi
\qquad\ie\qquad \rot^2 = \nabla\dive -\Delta.
\]
The columns of $\rot(\xi)$ are clearly orthogonal to $\xi$, which reflects the classical fact that $\div\circ\curl =0$.
The operator $|D|^{-1}\rot$ is obviously bounded on $L^2$ and is even of Calder\'on-Zygmund  type by Mikhlin's multiplier theorem.
The Leray projection onto divergence-free vector fields can be expressed in terms of $\rot$:
\begin{equation}\label{leray}
\mathbb P=\val{D}^{-2}\rot^{2}  =  \Id + \nabla (-\Delta)^{-1} \dive .
\end{equation}
The operator $\mathbb P$ is an orthogonal  projection since $\mathbb P=\mathbb P^{*}$ and $\mathbb P^{2}=\mathbb P$.
Similarly, $\Id-\mathbb P$ is an orthogonal projection onto gradient fields\footnote{As we plant our discussion exclusively
within the $L^2$ framework, there are no potential flows like $\nabla(x^2-y^2+x^3-3x z^2)$, which is both a gradient and a  divergence-free
field on $\R^3$. Such a field is formally in the range of $\mathbb{P}$.}, \ie the nullspace of $\rot$.
Note also that $\mathbb{P}$ and $\rot$ commute.

\subsection{The Navier-Stokes velocity equation in $\curl$ form}

Let us briefly present some alternative expressions of the Navier-Stokes equation involving the operator~$\rot$.
For now, we are not directly interested in the standard equation of vorticity  but rather in
expressing the linear and non-linear terms as curls.

\medskip
When $u$ is  a divergence-free vector field, the identity~\eqref{leray} implies that $\rot^2 u = -\Delta u$.
The Navier-Stokes equation \eqref{nsieq-} can thus also be written as 
\begin{equation}\label{nsieq}
\begin{cases}
\dis \frac{\p u}{\p t}+\mathbb P\bigl((u\cdot \nabla) u\bigr)+\nu \rot^{2} u=0, 
  \\
 u_{t=0}=u_{0}, \quad \dive u_{0}=0.
\end{cases}
\end{equation}
Applying $\mathbb P$ to the equation \eqref{nsieq-} leads directly to~\eqref{nsieq}.
Conversely,  \eqref{nsieq} implies that both $\p u/\p t$ and $u(t=0)$ are divergence-free, proving that $\dive u=0$.
Applying $\Id = \mathbb P - \nabla (-\Delta)^{-1} \dive$ to $u\cdot \nabla u$,
the pressure is immediately reconstructed with the identity $\nabla p=\nabla (-\Delta)^{-1} \div (u\cdot \nabla) u$.

\begin{rmk}
A common observation is that $(u\cdot \nabla)u=\sum_{j}u_{j}\p_{j}u=\sum_{j}\p_{j}(u_{j}u)=\dive (u\otimes u)$
because $u$ is a divergence-free vector field, which gives a meaning in the distributional sense  to the non-linear term as soon as~$u(t,\cdot)$ belongs to any space
embedded in $L^{2}_{\text{loc}} $.
\end{rmk}

\medskip
Let us now recall a well known identity of vector calculus. For any vector field $u$, one has~:
\begin{equation}\label{curlInProd}
(\rot u)\times u=(u\cdot \nabla)u-\frac12\nabla \val u^{2}.
\end{equation}
The first coordinate of $\rot u\times u$ is indeed:
\[
(\p_{3}u_{1}-\p_{1}u_{3}) u_{3}-(\p_{1}u_{2}-\p_{2}u_{1}) u_{2}=(u_{3}\p_{3}+u_{2}\p_{2})(u_{1})-\frac12\p_{1}(u_{3}^{2}+u_{2}^{2})
=(u\cdot \nabla)u_{1}-\frac12\p_{1}(\val u^{2}).
\]
The identity then follows by circular permutation among indices.
There is also a profound geometric reason for the above identity (known sometimes as the dot product rule),
as it is elemental in the definition of Riemannian connections (see appendix \ref{appendix}, equation~\eqref{appendix_curlucrossv}).

A direct consequence of~\eqref{curlInProd} for the non-linear term of Navier-Stokes is that
\begin{equation}\label{NS_CurlForm}
\mathbb P\bigl((u\cdot \nabla) u\bigr)= \mathbb P((\rot u)\times u).
\end{equation}
We may therefore rewrite the Navier-Stokes equation in \eqref{nsieq} as follows~:
\begin{equation}\label{nsi++}
\frac{\p u}{\p t}+\mathbb P\bigl( (\rot u)\times u\bigr)+\nu \rot^{2} u=0. 
\end{equation}
This particular form of the equation will be of central importance in what follows.
It suggests a new form of cancellations based on the following identity
\begin{equation}\label{NL_weak_det}
\poscal{(u\cdot \nabla) u}{w}_{L^{2}(\R^{3})}=\poscal{\rot u\times u}{w}_{L^{2}(\R^{3})}=\int_{\R^{3}}\det(\rot u,u,w) dx,
\end{equation}
which holds for any pair of divergence-free vector fields $u,w$.
The identity~\eqref{NS_CurlForm} thus underlines that \textbf{the non-linearity of Navier-Stokes has the structure of a cross-product},
and that its weak (distributional) form~\eqref{NL_weak_det} is a determinant that involves the vorticity, the velocity and a test function.

\medskip
Of course, this formulation is  related to the vorticity equation. One has:
\[
\rot \mathbb{P}((\rot u)\times u) = \mathbb{P} \rot((\rot u)\times u)
=\rot((\rot u)\times u) =  (u\cdot \nabla) \rot u - ((\rot u)\cdot\nabla) u.
\]
Therefore, applying $\rot$ to \eqref{nsi++} directly implies the vorticity equation
\begin{equation}\label{eq_vorticity}
\frac{\partial \omega}{\partial t} + (u\cdot \nabla) \omega  +\nu \rot^2 \omega = (\omega\cdot\nabla) u
\end{equation}
with $\omega = \rot u$. 
In the line of~\eqref{nsi++}, note that the nonlinear term of the vorticity equation inherits the structure of a cross-product:
$(u\cdot \nabla) \omega - (\omega\cdot\nabla) u = \rot(\omega \times u)$.
The vortex-stretching term
$(\omega\cdot\nabla) u = (\omega\cdot\nabla) |D|^{-2}\rot \omega$
is of order zero but highly non-local in $\omega$.
On average, it is orthogonal to $u$ (see \eqref{vortranport} below). 
The vortex-stretching term plays a central role in the cascade of energy
towards smaller scales in 3D~turbulent flows by thinning the girth of vortex tubes.

\begin{rmk}   
The Navier-Stokes equation can also be rewritten as~:
 \begin{equation}\label{}
\dis \frac{\p u}{\p t}+\mathbb P(u\cdot \nabla)\mathbb P u
+\nu \curl^{2} u=0, 
\end{equation}
to put the emphasis on the transport-diffusion aspect of the Navier-Stokes system.
However, due to the embedded pressure, the transport part is \textit{not} the divergence-free vector field $u\cdot \nabla$, but  the non-local skew-adjoint
operator
\[
\mathbb P(u\cdot \nabla)\mathbb P.
\]

For a time-independent and divergence-free vector field $U$, the flow of that operator,
\ie the solution of $\p_{t}\phi=\mathbb P(U\cdot \nabla)\mathbb P\phi$,
 is given by the Fourier Integral Operator
$$
\phi(t) = \exp^{it \, \mathbb P(U\cdot D)\mathbb P} \phi_0
$$
where $U\cdot D=-i U\cdot \nabla$; under rather mild assumptions of regularity, this operator is
self-adjoint (unbounded) on $L^{2}(\R^{3};\R^{3})$.

This flow is strikingly different from that of the vector field~$V$, \ie~$\psi(t) = \exp^{it (U\cdot D)}\psi_0$.
The difference induced by a projector ``sandwich'' is already striking among matrices.
For example, in $\R^2$, let us consider a self-adjoint matrix $A$ and a self-adjoint projection $P$ onto a
non-eigenvector of $A$:
\[
A=\begin{pmatrix}
\lambda & 0\\
0 & \mu 
\end{pmatrix}
\qquad
P=\frac{1}{2}\begin{pmatrix}
1 & 1\\
1 & 1
\end{pmatrix}= P^T
\]
then
\[
e^{itA}=\begin{pmatrix}
e^{it\lambda} & 0\\
0 & e^{it\mu} 
\end{pmatrix}
\qquad\text{while}\qquad
e^{itPAP}=\operatorname{Id}+ (e^{it\frac{\lambda+\mu}{2}}-1) P.
\]
The presence of the projector $P$ changes the evolution radically: 
the linear parts differ as $t\to0$ (the later being the $P$-projection of the former)
and the long-term behaviors are obviously completely different. 
\end{rmk}

\subsection{Decomposition of $\curl$ as a superposition of signed operators}
\label{par:curl_signed}

Let us denote by $\mathbb{P}L^2$ the subspace of $L^2(\R^3)$ composed of vector fields that are divergence-free.
As recalled in \S\ref{par:curl}, the curl operator is self-adjoint and elliptic on $\mathbb{P}L^2$.
We now want  to decompose~$\mathbb{P}L^2$ into an orthogonal direct sum of subspaces on which $\rot=\curl$ is signed.
The definition of these subspaces involves the following non-local operators associated with the ``square root'' of $\mathbb{P}$.

\begin{lemma}\label{defQ}
 One can decompose $\mathbb P=\mathbb Q_{+}+\mathbb Q_{-}$ where
 \begin{align}
\mathbb Q_{\pm}=\frac12\bigl(\mathbb P\pm \rot\val D^{-1}\bigr).
\end{align}
The operators $\mathbb Q_{\pm}$ satisfy  $\mathbb Q_{\pm}^{*}=\mathbb Q_{\pm}=\mathbb Q_{\pm}^{2}$
 and $\mathbb Q_{+}\mathbb Q_{-}=\mathbb Q_{-}\mathbb Q_{+}=0$.
\end{lemma}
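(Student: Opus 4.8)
The plan is to work entirely on the Fourier side, where $\mathbb{P}$ and $\rot$ are multiplication by the matrix symbols $\mathbb{P}(\xi) = \Id - \frac{\xi\otimes\xi}{|\xi|^2}$ and $\rot(\xi)$ from \eqref{symbol_rot}, and to reduce every operator identity to a pointwise matrix identity valid for a.e.\ $\xi \neq 0$. Accordingly, define $\mathbb{Q}_\pm(\xi) = \frac12\bigl(\mathbb{P}(\xi) \pm |\xi|^{-1}\rot(\xi)\bigr)$; the first thing to record is that $\mathbb{Q}_+ + \mathbb{Q}_- = \mathbb{P}$, which is immediate, and that each $\mathbb{Q}_\pm$ is Hermitian because $\mathbb{P}(\xi)^* = \mathbb{P}(\xi)$ (it is a real symmetric projection) and $\rot(\xi)^* = \rot(\xi)$ was already noted in the excerpt. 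Since $|\xi|^{-1}\rot$ is a Calder\'on--Zygmund operator bounded on $L^2$ and $\mathbb{P}$ is an orthogonal projection, $\mathbb{Q}_\pm$ are bounded on $L^2(\R^3)$, so all the composition identities below make sense as bona fide $L^2$-operator identities.

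The core of the argument is the pair of pointwise matrix identities
\begin{equation}
\mathbb{P}(\xi)\,\rot(\xi) = \rot(\xi)\,\mathbb{P}(\xi) = \rot(\xi), \qquad \rot(\xi)^2 = |\xi|^2\,\mathbb{P}(\xi).
\end{equation}
The first holds because the columns of $\rot(\xi)$ are orthogonal to $\xi$ (stated in the excerpt, reflecting $\div\circ\curl = 0$ and $\curl\circ\nabla = 0$), so $\rot(\xi)$ annihilates and is annihilated by the rank-one piece $\frac{\xi\otimes\xi}{|\xi|^2}$ of $\Id - \mathbb{P}(\xi)$. The second is exactly the computation $\rot(\xi)^2 = |\xi|^2\Id - \xi\otimes\xi$ carried out in \S\ref{par:curl}, rewritten using $\mathbb{P}(\xi) = \Id - |\xi|^{-2}\,\xi\otimes\xi$. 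Granting these, I would compute directly:
\begin{align}
4\,\mathbb{Q}_\pm(\xi)^2 &= \mathbb{P}(\xi)^2 \pm \tfrac{1}{|\xi|}\bigl(\mathbb{P}(\xi)\rot(\xi) + \rot(\xi)\mathbb{P}(\xi)\bigr) + \tfrac{1}{|\xi|^2}\rot(\xi)^2 \notag\\
&= \mathbb{P}(\xi) \pm \tfrac{2}{|\xi|}\rot(\xi) + \mathbb{P}(\xi) = 2\bigl(\mathbb{P}(\xi) \pm \tfrac{1}{|\xi|}\rot(\xi)\bigr) = 4\,\mathbb{Q}_\pm(\xi),
\end{align}
using $\mathbb{P}(\xi)^2 = \mathbb{P}(\xi)$, which gives $\mathbb{Q}_\pm^2 = \mathbb{Q}_\pm$. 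The same expansion with the cross terms gives
\begin{equation}
4\,\mathbb{Q}_+(\xi)\mathbb{Q}_-(\xi) = \mathbb{P}(\xi)^2 - \tfrac{1}{|\xi|}\bigl(\mathbb{P}(\xi)\rot(\xi) - \rot(\xi)\mathbb{P}(\xi)\bigr) - \tfrac{1}{|\xi|^2}\rot(\xi)^2 = \mathbb{P}(\xi) - 0 - \mathbb{P}(\xi) = 0,
\end{equation}
and symmetrically $\mathbb{Q}_-(\xi)\mathbb{Q}_+(\xi) = 0$; the middle term vanishes because $\mathbb{P}(\xi)$ and $\rot(\xi)$ commute, which is the content of the first matrix identity above. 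Translating these back through the Fourier transform \eqref{fourie}, which is unitary on $L^2$, yields $\mathbb{Q}_\pm^* = \mathbb{Q}_\pm = \mathbb{Q}_\pm^2$ and $\mathbb{Q}_+\mathbb{Q}_- = \mathbb{Q}_-\mathbb{Q}_+ = 0$ as claimed; self-adjointness can alternatively be obtained operator-theoretically from $\mathbb{P}^* = \mathbb{P}$ and the (formal, here genuine since $|D|^{-1}\rot$ is bounded) self-adjointness of $\rot|D|^{-1} = |D|^{-1}\rot$.

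I do not expect a serious obstacle here: the statement is a purely algebraic consequence of $\mathbb{P}$ being an orthogonal projection commuting with the self-adjoint operator $\rot$, together with the normalization $(\rot|D|^{-1})^2 = \mathbb{P}$ which makes $\rot|D|^{-1}$ behave like a ``sign'' on the range of $\mathbb{P}$. If anything is delicate, it is purely bookkeeping: one must be a little careful that all operators are restricted to (or make sense on) $\mathbb{P}L^2$, that $|D|^{-1}$ is well-defined on the relevant class (the symbol $|\xi|^{-1}$ is singular only at $\xi = 0$, which is a null set, and $|D|^{-1}\rot$ has the bounded symbol $|\xi|^{-1}\rot(\xi)$), and that $\rot|D|^{-1} = |D|^{-1}\rot$ since both are Fourier multipliers and scalar multipliers commute with matrix ones. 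Once these are dispatched, the proof is the three-line symbol computation above.
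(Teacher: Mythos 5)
Your proposal is correct and follows essentially the same route as the paper: the paper expands $\mathbb Q_{\pm}^{2}$ at the operator level and invokes \eqref{leray} (\ie $\rot^{2}\val{D}^{-2}=\mathbb P$), $\mathbb P^{2}=\mathbb P$, $[\mathbb P,\rot\val{D}^{-1}]=0$ and $\mathbb P\rot=\rot$, which are exactly the symbol identities $\rot(\xi)^2=|\xi|^2\mathbb P(\xi)$ and $\mathbb P(\xi)\rot(\xi)=\rot(\xi)\mathbb P(\xi)=\rot(\xi)$ that you use pointwise in $\xi$. The only difference is presentational (Fourier-side matrix algebra versus operator algebra), and your extra care about boundedness and the null set $\{\xi=0\}$ is sound.
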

\begin{proof}
The main computation is
$\mathbb Q_{\pm}^{2}=\frac14\bigl(\mathbb P^{2}+\rot^{2}\val{D}^{-2}\pm( \mathbb P \rot\val{D}^{-1}+\rot\val{D}^{-1}\mathbb P)\bigr)$.
Applying~\eqref{leray} ensures the simplifications $\mathbb P^{2}=\mathbb P$ and $[\mathbb P, \rot \val{D}^{-1}]=0$.
As $\mathbb P \rot=\rot$, we obtain $\mathbb Q_{\pm}^2=\mathbb Q_{\pm}$.
The other properties follow immediately.
\cqfd\end{proof}

\medskip
Let us define the following \textsl{signed curl} operators:
 \begin{align}
\rot_{+}=\rot\mathbb Q_{+} \quad \text{and}\quad
\rot_{-}=-\rot\mathbb Q_{-}.
\end{align}
These operators play a central role in this article.
\begin{lemma}\label{lem13}
One can decompose $\rot=\rot_{+}-\rot_{-}$. The operators $\rot_\pm$ satisfy
\begin{align}
& \rot_{\pm}^{*}= \rot_{\pm}\ge 0, \\
&\rot_{+}\rot_{-}=\rot_{-}\rot_{+}=0, \\
&\rot_{+}=\val D \mathbb Q_{+}=\mathbb Q_{+}\val D \mathbb Q_{+} = \mathbb Q_{+}\rot\mathbb Q_{+}, \\
&\rot_{-}=\val D \mathbb Q_{-}=\mathbb Q_{-}\val D \mathbb Q_{-}=-\mathbb Q_{-}\rot\mathbb Q_{-}.
\end{align}
\end{lemma}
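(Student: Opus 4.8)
The plan is to work entirely on the Fourier side, where every operator in sight is an ordinary matrix-valued multiplier, and then transfer the resulting matrix identities back. On the symbol level, write $\rot(\xi)$ for the skew-Hermitian matrix in~\eqref{symbol_rot}, set $|D|\rightsquigarrow|\xi|$, and recall from Lemma~\ref{defQ} that $\mathbb Q_{\pm}$ has symbol $\frac12\bigl(\mathbb P(\xi)\pm|\xi|^{-1}\rot(\xi)\bigr)$ where $\mathbb P(\xi)=\Id-\xi\otimes\xi/|\xi|^2=|\xi|^{-2}\rot(\xi)^2$. The crucial algebraic fact, already implicit in the excerpt, is that $\rot(\xi)$ restricted to the plane $\xi^{\perp}$ (the range of $\mathbb P(\xi)$) behaves like a rotation by $\pi/2$ scaled by $|\xi|$, so that $\rot(\xi)^2=-|\xi|^2$ on that plane; equivalently $\rot(\xi)\,\mathbb P(\xi)=\mathbb P(\xi)\,\rot(\xi)=\rot(\xi)$ and $\rot(\xi)^2\,\mathbb P(\xi)=-|\xi|^2\,\mathbb P(\xi)$. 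These two identities are the engine of everything below.

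First I would establish $\rot=\rot_{+}-\rot_{-}$, which is immediate: $\rot_{+}-\rot_{-}=\rot\mathbb Q_{+}+\rot\mathbb Q_{-}=\rot(\mathbb Q_{+}+\mathbb Q_{-})=\rot\mathbb P=\rot$, using Lemma~\ref{defQ} and the fact from~\S\ref{par:curl} that $\mathbb P$ and $\rot$ commute with $\mathbb P\rot=\rot$. Next, the orthogonality $\rot_{+}\rot_{-}=\rot_{-}\rot_{+}=0$: expand $\rot_{+}\rot_{-}=-\rot\mathbb Q_{+}\rot\mathbb Q_{-}$, push the middle $\rot$ past $\mathbb Q_{+}$ using $[\rot,\mathbb Q_{\pm}]=0$ (which follows from $[\rot,\mathbb P]=0$ and $[\rot,\rot|D|^{-1}]=0$), to get $-\rot^{2}\mathbb Q_{+}\mathbb Q_{-}=0$ by Lemma~\ref{defQ}; the reversed product is symmetric. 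For the self-adjointness and positivity, and for the chain of four reformulations of $\rot_{\pm}$, I would prove the key identity $\rot\mathbb Q_{\pm}=\pm|D|\mathbb Q_{\pm}$ first. On symbols this reads $\rot(\xi)\cdot\frac12\bigl(\mathbb P(\xi)\pm|\xi|^{-1}\rot(\xi)\bigr)=\frac12\bigl(\rot(\xi)\pm|\xi|^{-1}\rot(\xi)^2\bigr)=\frac12\bigl(\rot(\xi)\mp|\xi|\,\mathbb P(\xi)\bigr)$, and comparing with $\pm|\xi|\,\mathbb Q_{\pm}(\xi)=\pm\frac12\bigl(|\xi|\mathbb P(\xi)\pm\rot(\xi)\bigr)$ gives equality. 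Hence $\rot_{+}=\rot\mathbb Q_{+}=|D|\mathbb Q_{+}$ and $\rot_{-}=-\rot\mathbb Q_{-}=-(-|D|\mathbb Q_{-})=|D|\mathbb Q_{-}$. From $\rot_{\pm}=|D|\mathbb Q_{\pm}$ and $\mathbb Q_{\pm}=\mathbb Q_{\pm}^2$ one gets $\rot_{\pm}=|D|\mathbb Q_{\pm}^2=\mathbb Q_{\pm}|D|\mathbb Q_{\pm}$ (using that $|D|$, a scalar multiplier, commutes with $\mathbb Q_{\pm}$), which is the middle expression; and $\mathbb Q_{\pm}|D|\mathbb Q_{\pm}=\mathbb Q_{\pm}(\pm\rot\mathbb Q_{\pm})=\pm\mathbb Q_{\pm}\rot\mathbb Q_{\pm}$, the last expression. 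Self-adjointness $\rot_{\pm}^{*}=\rot_{\pm}$ is then visible from the symmetric form $\mathbb Q_{\pm}|D|\mathbb Q_{\pm}$ since $\mathbb Q_{\pm}^{*}=\mathbb Q_{\pm}$ and $|D|$ is self-adjoint; positivity $\rot_{\pm}\ge0$ follows from $\langle\rot_{\pm}v,v\rangle=\langle|D|\mathbb Q_{\pm}v,\mathbb Q_{\pm}v\rangle=\bigl\||D|^{1/2}\mathbb Q_{\pm}v\bigr\|^2\ge0$.

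I do not expect a serious obstacle here; the content is entirely the linear algebra of $\rot(\xi)$ on the two-dimensional space $\xi^{\perp}$, and once the identity $\rot(\xi)^2\mathbb P(\xi)=-|\xi|^2\mathbb P(\xi)$ (equivalently $\rot^2=-\Delta$ on divergence-free fields, already noted in the excerpt before~\eqref{nsieq}) and the commutations $[\rot,\mathbb P]=0$, $[\,|D|,\mathbb Q_{\pm}]=0$ are in hand, each assertion is a one-line manipulation. The only mild care needed is domain/boundedness bookkeeping: $\mathbb Q_{\pm}$ and $|D|^{-1}\rot$ are bounded (indeed Calderón–Zygmund) on $L^2$, whereas $|D|$ and $\rot$ are unbounded, so the identities $\rot_{\pm}=|D|\mathbb Q_{\pm}=\mathbb Q_{\pm}|D|\mathbb Q_{\pm}$ should be read on the natural domain (say $H^1\cap\mathbb P L^2$, or on Schwartz functions and extended by density), and the positivity uses the closed quadratic form $v\mapsto\bigl\||D|^{1/2}\mathbb Q_{\pm}v\bigr\|^2$. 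This is routine and I would dispatch it in a sentence rather than belabor it.
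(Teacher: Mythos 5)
Your strategy is exactly the paper's: reduce everything to the commutations $[\rot,\mathbb P]=0$, the projection identities of Lemma~\ref{defQ}, and the single identity $\rot\,\mathbb Q_{\pm}=\pm\val{D}\mathbb Q_{\pm}$, after which the decomposition $\rot=\rot_{+}-\rot_{-}$, the orthogonality $\rot_{+}\rot_{-}=\rot_{-}\rot_{+}=0$, the reformulations $\rot_{\pm}=\val{D}\mathbb Q_{\pm}=\mathbb Q_{\pm}\val{D}\mathbb Q_{\pm}=\pm\mathbb Q_{\pm}\rot\,\mathbb Q_{\pm}$, and the self-adjointness and positivity via $\poscal{\val{D}\mathbb Q_{\pm}v}{\mathbb Q_{\pm}v}=\norm{\val{D}^{1/2}\mathbb Q_{\pm}v}^{2}\ge0$ are all one-line consequences. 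All of those steps are correct and coincide with the published proof, which runs the same computation with $\rot^{2}\val{D}^{-1}=\mathbb P\val{D}$ instead of symbols --- an immaterial difference. Your closing remark on domains is reasonable and the paper does not belabor it either.

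There is, however, a genuine error at the load-bearing step: the ``crucial algebraic fact'' is stated with the wrong sign, and the displayed verification of $\rot\,\mathbb Q_{\pm}=\pm\val{D}\mathbb Q_{\pm}$ is internally inconsistent as written. You assert $\rot(\xi)^{2}=-\val\xi^{2}$ on $\xi^{\perp}$, \ie $\rot(\xi)^{2}\mathbb P(\xi)=-\val\xi^{2}\mathbb P(\xi)$, on the grounds that $\rot(\xi)$ acts as a rotation by $\pi/2$ scaled by $\val\xi$. That picture describes $\eta\mapsto\xi\times\eta$, but the symbol \eqref{symbol_rot} is $\rot(\xi)\eta=i\,\xi\times\eta$; the factor $i$ flips the sign, and the paper's explicit computation gives $\rot(\xi)^{2}=\val\xi^{2}\Id-\xi\otimes\xi=+\val\xi^{2}\mathbb P(\xi)$. (This is what $\rot^{2}=-\Delta$ on divergence-free fields means, since $-\Delta$ has symbol $+\val\xi^{2}$; your parenthetical ``equivalently'' therefore also has the sign reversed. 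Note $\rot(\xi)$ is Hermitian with real eigenvalues $0,\pm\val\xi$, not a skew rotation.) The sign matters: with your version, your displayed chain gives $\rot(\xi)\mathbb Q_{+}(\xi)=\tfrac12\bigl(\rot(\xi)-\val\xi\,\mathbb P(\xi)\bigr)=-\val\xi\,\mathbb Q_{-}(\xi)$, which is \emph{not} equal to $+\val\xi\,\mathbb Q_{+}(\xi)=\tfrac12\bigl(\val\xi\,\mathbb P(\xi)+\rot(\xi)\bigr)$, so the concluding ``comparing \dots gives equality'' is false for the expression you actually wrote (and would even contradict $\rot\,\mathbb Q_{+}\mathbb Q_{-}=0$). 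With the correct sign the intermediate term becomes $\tfrac12\bigl(\rot(\xi)\pm\val\xi\,\mathbb P(\xi)\bigr)=\pm\val\xi\,\mathbb Q_{\pm}(\xi)$ and everything downstream goes through verbatim. So the repair is a single sign, but it must be made at the identity you yourself call the engine of the whole argument.
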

\begin{proof}
Since $[\mathbb P, \rot]=0$ we have 
 $[\rot,\mathbb Q_{\pm}]=0$ so that 
 $
 \rot\mathbb Q_{\pm}=\rot\mathbb Q_{\pm}^{2}=\mathbb Q_{\pm}\rot\mathbb Q_{\pm}
 $.
The properties
\[
 \rot = \mathbb{P} \rot=\mathbb Q_{+} \rot+\mathbb Q_{-} \rot=\rot_{+}-\rot_{-}, \quad \rot_{+}\rot_{-}= \rot_{-}\rot_{+}=0, \quad \rot_{\pm}^{*}=\rot_{\pm}
\]
follow from the corresponding ones for $\mathbb Q_{\pm}$.
We also have 
\begin{align*}
 \rot\mathbb Q_{+}&=\frac12\bigl(  \rot\mathbb P+ \rot^{2}\val{D}^{-1}\bigr)=\frac12\bigl( \rot+\mathbb P \val{D}\bigr)=\val{D}\frac12\bigl(\mathbb P+ \rot\val{D}^{-1}\bigr)
 =\val{D}\mathbb Q_{+},
\\
  \rot\mathbb Q_{-}&=\frac12\bigl(  \rot\mathbb P- \rot^{2}\val{D}^{-1}\bigr)=\frac12\bigl( \rot-\mathbb P \val{D}\bigr)=-\val{D}\frac12\bigl(\mathbb P- \rot\val{D}^{-1}\bigr)
 =-\val{D}\mathbb Q_{-}.
 \end{align*} 
 Observing that $\val{D}\mathbb Q_{\pm}=\mathbb Q_{\pm}\val{D}\mathbb Q_{\pm}$ ensures the positivity of these
 operators.
\cqfd\end{proof}

\begin{rmk}\label{rem13}
The previous lemma ensures that the respective restrictions of $ \rot_{\pm}$
to $\mathbb Q_{\pm} L^{2}$ both coincide with~$\val D$.
The kernel of $ \rot_{\pm}$  in $\mathbb P L^{2}$ is  $\mathbb Q_{\mp} L^{2}$.
 In the orthogonal decomposition $\mathbb{P}L^2 = \mathbb Q_{+} L^{2} \oplus \mathbb Q_{-} L^{2}$,
 the matrix of the curl operator is thus
 $$
 \begin{pmatrix}
 \val D&0
 \\
 0&-\val D
  \end{pmatrix}
 $$
 \ie $\rot = |D| \circ (\mathbb{Q}_+ - \mathbb{Q}_-)$ is the \textbf{diagonalization of the curl operator}. This formula highlights
 a profound geometric connection between the curl and the pseudo-derivative $|D|$~: both operators are images of one another 
by a symmetry of $\mathbb{P}L^2$. Note also that, by functional calculus, one may define fractional operators
\begin{equation}
\rot_\pm^s = |D|^s \mathbb{Q}_\pm
\end{equation}
for any $s\in\R$; the corresponding $\rot^s = |D|^s \circ (\mathbb{Q}_+ +e^{s i\pi} \mathbb{Q}_-)$ is
however not self-adjoint if $s\in\R\backslash\Z$.
\end{rmk}

\medskip
In view of these properties, we are led to introduce the following definition.
\begin{dfn}\label{def:spin_definite}
A divergence-free vector field in $L^2(\R^3)$ is said to have \textbf{positive} (resp. negative) \textbf{spin} if it belongs
to the subspace $\mathbb{Q}_+L^2$ (resp. $\mathbb{Q}_-L^2$). We say that $u$ is \textbf{spin-definite} if
it has either positive or negative spin.
\end{dfn}
According to Remark~\ref{rem13}, a square integrable field $u$ has positive spin (up to a gradient field)
if and only if $\rot u = |D|\mathbb{P}u$ and negative spin if~$\rot u = -|D|\mathbb{P}u$.
In general, a divergence-free vector field  is not spin-definite; however, Lemma~\ref{defQ} ensures that any $\mathbb{P}L^2$ field
is always the (direct) sum of two spin-definite vector fields with opposite spins.

\begin{rmk}
The notion of spin-definite field has been known in  physics literature under the denomination
\textit{helical decomposition} and dates back to Lesieur \cite{Les1972}. It has occasionally been used in
theoretical and numerical investigations, \eg
Constantin \& Majda~\cite{CM1988}, Cambon \& Jacquin~\cite{CJ1989}, Waleffe~\cite{W1992}, 
Alexakis~\cite{A2017}. See also the discussion in~\S\ref{par:2D} below.
\end{rmk}

\begin{ex}\label{planarBeltrami}
The spin-definite fields that are spectrally supported on a sphere are examples of Beltrami flows. If $\widehat{W}(\xi)$
is a distribution supported on $\{|\xi|=\lambda\}$, then $|D|W=\lambda W$; in this case, $W$ is spin-definite if and only~if~$\rot W = \pm \lambda W$.
In the periodic setting (or if one drops the square integrability on $\R^3$), the simplest non-trivial example is of the form
\begin{equation}\label{planarBeltrami2}
W^\pm_{\lambda,\phi}(x)=\cos(\lambda x\cdot \vv{e_1}+\phi) \vv{e_2} \mp \sin(\lambda x\cdot \vv{e_1}+\phi) \vv{e_3}
\end{equation}
for some orthonormal basis $(\vv{e_1},\vv{e_2},\vv{e_3})$, a frequency $\lambda>0$
and a phase shift $\phi\in [0,2\pi)$; $W^+_{\lambda,\phi}$ is spin-positive and $W^-_{\lambda,\phi}$ is spin-negative.
The fields $e^{-\nu t \lambda^2}W^\pm_{\lambda,\phi}(x)$ are exact solutions of the Navier-Stokes equation.
They are a transient planar wave and a shear flow where the main direction of the shear
rotates (resp. right- of left-handedly) as one travels along the axis $\R\vv{e_1}$. It is the hydrodynamical equivalent of a circularly
polarized electromagnetic wave
(for further results on Beltrami flows, see~\eg\cite{MR617299} and~\cite{MR2191618}).
\end{ex}

\smallskip
Let us comment on the ``microlocal'' meaning of this definition.
It is common knowledge that all complex vector spaces (even of higher dimension) are canonically
oriented by the initial choice of one square root of $-1$ among the two choices $\pm i$. 
For $\xi\neq0$, the subspace $\xi^\perp$ of $\C^3$ is of complex dimension~2;
according to~\eqref{leray}, the matrix $|\xi|^{-1} \rot(\xi)\in\mathcal{M}_{3,3}(\C)$ defined by \eqref{symbol_rot}
is a square root of the orthogonal projector $\mathbb{P}(\xi)=I-\val{\xi}^{-2}(\xi\otimes \xi)$ of~$\C^3$ onto~$\xi^\perp$.
The pair $(\mathbb{P}(\xi),-i|\xi|^{-1}\rot (\xi))$ defines a complex structure with conjugate coordinates
$\mathbb{Q}_\pm(\xi)$.
A field has positive spin if, at each frequency $\xi\in\R^{3}\backslash\{0\}$, the complex vector $\hat{u}(\xi)$ belongs to $\range \mathbb{Q}_+(\xi)$.

\begin{lemma}
 For $\xi\in \R^{3}\backslash\{0\}$ and the matrix $\rot(\xi)\in\mathcal{M}_{3,3}(\C)$ defined by \eqref{symbol_rot}, we have 
 \begin{gather}
\ker \rot (\xi)=\C \xi,
\qquad  \range  \rot (\xi)=\xi^{\perp}=\left\{\eta \in \C^{3}\,;\, \eta\cdot \xi=0\right\}, 
\\
\operatorname{Spec} \rot (\xi)=\left\{0,\pm \val \xi\right\},
\qquad \ker(\rot (\xi)\mp \val \xi)=\range \mathbb Q_{\pm}(\xi).
\label{555}
\end{gather}
In particular, $\range \mathbb Q_{\pm}(\xi)$ is one-dimensional if $\xi\not=0$.
One has $\mathbb{Q}_-(\xi) = \mathbb{Q}_+(-\xi)=\overline{\mathbb{Q}_+(\xi)}$.
In local coordinates, the non-trivial eigenvectors are given, \eg away from the axis $\xi_2=\xi_3=0$, by:
\begin{equation}\label{eigenvector}
\delta_\pm(\xi) = \frac{1}{2|\xi|^{2}}\begin{pmatrix}
\xi_2^2 +\xi_3^2\\
-\xi_1 \xi_2 \pm i \xi_3 |\xi| \\
-\xi_1\xi_3 \mp i \xi_2|\xi|
\end{pmatrix}
\end{equation}
and one has
\begin{equation}
\ker \mathbb{Q}_\pm(\xi) = \operatorname{Span}_\C \{ \xi, \delta_{\mp}(\xi)\},
\qquad
\range \mathbb Q_{\pm}(\xi) =\C \delta_\pm(\xi).
\end{equation}
\end{lemma}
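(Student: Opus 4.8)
The plan is to verify each assertion in the lemma directly at the level of the $3\times 3$ matrix $\rot(\xi)$, using that everything reduces to pointwise linear algebra in $\C^3$ once we recall (from \eqref{leray}, or rather its pointwise symbol version) that $\rot(\xi)^2 = |\xi|^2\Id - \xi\otimes\xi$. First I would treat the kernel and range: the columns of $\rot(\xi)$ are visibly orthogonal to $\xi$, so $\range\rot(\xi)\subseteq\xi^\perp$, and $\rot(\xi)\xi = i\xi\times\xi = 0$ gives $\C\xi\subseteq\ker\rot(\xi)$; since $\dim\ker + \dim\range = 3$ and $\dim\xi^\perp = 2$, both inclusions are equalities. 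Next, the spectrum: on the invariant subspace $\xi^\perp$ the identity $\rot(\xi)^2 = |\xi|^2\Id - \xi\otimes\xi$ restricts to $\rot(\xi)^2 = |\xi|^2\Id$ (because $\xi\otimes\xi$ kills $\xi^\perp$), so the eigenvalues of $\rot(\xi)$ on $\xi^\perp$ satisfy $\mu^2 = |\xi|^2$, i.e.\ $\mu = \pm|\xi|$; adding the eigenvalue $0$ on $\C\xi$ gives $\operatorname{Spec}\rot(\xi) = \{0,\pm|\xi|\}$. Since $\rot(\xi)$ is Hermitian with three distinct eigenvalues, each eigenspace is one-dimensional, which already yields that $\range\mathbb{Q}_\pm(\xi)$ is a line.

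For the identification $\ker(\rot(\xi)\mp|\xi|) = \range\mathbb{Q}_\pm(\xi)$, I would use the definition $\mathbb{Q}_\pm(\xi) = \tfrac12(\mathbb{P}(\xi)\pm|\xi|^{-1}\rot(\xi))$ together with $\rot(\xi)\mathbb{P}(\xi) = \rot(\xi)$ (which is the symbol version of $\mathbb{P}\rot = \rot$) and $\rot(\xi)^2\,|\xi|^{-2} = \mathbb{P}(\xi)$ on $\xi^\perp$: one computes $\rot(\xi)\mathbb{Q}_\pm(\xi) = \tfrac12(\rot(\xi) \pm |\xi|^{-1}\rot(\xi)^2) = \tfrac12(\rot(\xi) \pm |\xi|\mathbb{P}(\xi)) = \pm|\xi|\,\mathbb{Q}_\pm(\xi)$, so $\range\mathbb{Q}_\pm(\xi)\subseteq\ker(\rot(\xi)\mp|\xi|)$; equality follows because both are lines (the left side is nonzero since $\mathbb{Q}_+(\xi)+\mathbb{Q}_-(\xi) = \mathbb{P}(\xi)\neq 0$ and the two ranges are orthogonal, hence neither can vanish). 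The relation $\mathbb{Q}_-(\xi) = \overline{\mathbb{Q}_+(\xi)}$ is immediate from $\overline{\rot(\xi)} = -\rot(\xi)$ and $\overline{\mathbb{P}(\xi)} = \mathbb{P}(\xi)$ (as $\mathbb{P}(\xi)$ is real), and $\mathbb{Q}_+(-\xi) = \mathbb{Q}_-(\xi)$ follows since $\rot(-\xi) = -\rot(\xi)$ while $\mathbb{P}(-\xi) = \mathbb{P}(\xi)$.

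It then remains to check the explicit formula \eqref{eigenvector}. Here I would simply verify by direct substitution that $\delta_\pm(\xi)$ as written satisfies $\rot(\xi)\delta_\pm(\xi) = \pm|\xi|\,\delta_\pm(\xi)$ and is nonzero away from the axis $\xi_2 = \xi_3 = 0$ (its first component is $\tfrac{\xi_2^2+\xi_3^2}{2|\xi|^2} > 0$ there), which by the one-dimensionality already proven pins down $\range\mathbb{Q}_\pm(\xi) = \C\delta_\pm(\xi)$; and since $\ker\mathbb{Q}_\pm(\xi)$ is the orthogonal complement of a line inside the $2$-dimensional $\range\mathbb{Q}_\pm(\xi)$ added to $\C\xi$, and $\delta_\mp(\xi)\perp\delta_\pm(\xi)$ with $\xi\perp$ both, we get $\ker\mathbb{Q}_\pm(\xi) = \operatorname{Span}_\C\{\xi,\delta_\mp(\xi)\}$. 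The only mildly delicate point — and the one I would be most careful about — is the normalization and sign bookkeeping in \eqref{eigenvector}: one must confirm that the $\pm$ in $\delta_\pm$ really matches the $\pm$ in the eigenvalue $\pm|\xi|$ and is consistent with the earlier convention $\mathbb{Q}_\pm = \tfrac12(\mathbb{P}\pm\rot|D|^{-1})$, so I would double-check the computation $\rot(\xi)\delta_+(\xi) = |\xi|\delta_+(\xi)$ explicitly on, say, the second component: $(\rot(\xi)\delta_+)_2 = i\xi_3(\delta_+)_1 - i\xi_1(\delta_+)_3 = \tfrac{i\xi_3(\xi_2^2+\xi_3^2) - i\xi_1(-\xi_1\xi_3 - i\xi_2|\xi|)}{2|\xi|^2} = \tfrac{i\xi_3|\xi|^2 - \xi_1\xi_2|\xi|}{2|\xi|^2} = |\xi|\,(\delta_+)_2$, and similarly for the third, the first being automatic since $\delta_\pm(\xi)\in\xi^\perp$ and $\rot(\xi)$ acts as $\pm|\xi|$ on $\xi^\perp$ once a single component checks out.
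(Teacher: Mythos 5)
Your argument is correct and is, in essence, the paper's own: both proofs reduce everything to pointwise linear algebra on the symbol, using $\rot(\xi)^2=\val{\xi}^2\Id-\xi\otimes\xi$ and the rank-one orthogonal projections $\mathbb Q_{\pm}(\xi)=\frac12\bigl(\mathbb P(\xi)\pm\val{\xi}^{-1}\rot(\xi)\bigr)$ inherited from Lemma~\ref{lem13}. Two remarks on details. First, your deduction of $\ker\rot(\xi)=\C\xi$ and $\range\rot(\xi)=\xi^{\perp}$ from the dimension count alone is too quick as stated: the inclusions $\C\xi\subseteq\ker\rot(\xi)$ and $\range\rot(\xi)\subseteq\xi^{\perp}$ together with $\dim\ker+\dim\range=3$ are still compatible with $(\dim\ker,\dim\range)=(2,1)$. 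What closes this is precisely your next observation that $\rot(\xi)^2=\val{\xi}^2\Id$ on $\xi^{\perp}$, which forces injectivity there (the paper instead argues directly that $i\xi\times(a+ib)=0$ implies $a,b\in\R\xi$); the same identity, or simply $\trace\rot(\xi)=0$, is also what guarantees that \emph{both} eigenvalues $\pm\val{\xi}$ actually occur on $\xi^{\perp}$, which ``$\mu^2=\val{\xi}^2$, hence $\mu=\pm\val{\xi}$'' does not by itself establish --- so you should invoke one of these before asserting three distinct eigenvalues. Second, for \eqref{eigenvector} the paper obtains $\delta_{\pm}(\xi)$ as the first column of $\mathbb Q_{\pm}(\xi)$, i.e.\ $\delta_{\pm}(\xi)=\mathbb Q_{\pm}(\xi)\vv{\mathbf e_{1}}$, which places it in $\range\mathbb Q_{\pm}(\xi)$ by construction and explains why the formula degenerates exactly on the axis $\xi_2=\xi_3=0$, where $\vv{\mathbf e_{1}}$ becomes proportional to $\xi$; your a posteriori verification of $\rot(\xi)\delta_{\pm}=\pm\val{\xi}\,\delta_{\pm}$ (whose second-component computation is correct) is an equally valid, if slightly longer, route to the same conclusion.
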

\begin{proof}
Let $\xi$ be in $\R^{3}\backslash\{0\}$.
If  for $a,b\in \R^{3}$ we have  $i\xi\times (a+i b)=0$, we obtain that $\xi\times a=\xi\times b=0$,
which is equivalent to $a\wedge \xi=b\wedge \xi=0$, i.e. $(a+ib)\in \C \xi$.
On the other hand, the two-dimensional $\xi^{\perp}$ contains the two-dimensional range of $\rot (\xi)$.
Properties \eqref{555} follow from Lemma \ref{lem13}, which implies that
\[
\rot (\xi)=\val \xi \mathbb Q_{+}(\xi)-\val \xi \mathbb Q_{-}(\xi)
\]
where $\mathbb Q_{\pm}(\xi)$ are the rank-one projections defined by
\[
\mathbb Q_{\pm}(\xi)=\frac12
\bigl(
\underbrace{I-\val{\xi}^{-2}(\xi\otimes \xi)}_{\text{real symmetric}}\pm\!\!\!
\underbrace{ \val \xi^{-1}\rot (\xi)}_{\substack{
\text{ purely imaginary}\\\text{anti-symmetric}
}
}\!\!\!\bigr).
\]
The operators $\mathbb Q_{\pm}$ are the Fourier multiplier  $\mathbb Q_{\pm}(D)$.
The formula for $\delta_\pm(\xi)$ is obtained by choosing the first column of $\mathbb Q_{\pm}(\xi)$.
\cqfd\end{proof}
\begin{rmk}
The previous choice for $\delta_\pm(\xi)$ becomes singular along the axis
$\xi_2=\xi_3=0$. To perform computations near this axis, one should instead choose
another column of $\mathbb Q_{\pm}(\xi)$ as basis vectors. 
\end{rmk}

With these local coordinates, the general expression  of the Fourier reconstruction of a divergence-free vector field is:
\begin{equation}\label{uSpin}
u(x)=\int_{\R^3} \left[ \vartheta_+(\xi) \delta_+(\xi) + \vartheta_-(\xi) \delta_-(\xi) \right] e^{ix \cdot \xi} d\xi
\end{equation}
for some spectral weights $\vartheta_\pm(\xi)\in\C$ defined almost everywhere and obtained
in a unique way by the decomposition of~$\hat{u}(\xi)$ on the
basis $(\delta_+(\xi),\delta_-(\xi))$ of $\xi^\perp$. 
As $u$ is real-valued, the weights have to satisfy
\[
\vartheta_\pm(-\xi)=\overline{\vartheta_\pm(\xi)}.
\]
One can easily compute:
\[
\rot u(x)= \int_{\R^3} |\xi|\left[ \vartheta_+(\xi) \delta_+(\xi) - \vartheta_-(\xi) \delta_-(\xi) \right] e^{ix \cdot \xi} d\xi
\]
and
\[
|D| u(x)=\int_{\R^3} |\xi|\left[ \vartheta_+(\xi) \delta_+(\xi) + \vartheta_-(\xi) \delta_-(\xi) \right] e^{ix \cdot \xi} d\xi.
\]
A field $u$ has positive (resp. negative) spin if and only if $\vartheta_-\equiv0$ (resp. $\vartheta_+\equiv0$).

\begin{cor}
The spin is a chiral notion: the mirror image of a field with positive spin by a planar symmetry of $\R^3$ is a field of negative spin.
\end{cor}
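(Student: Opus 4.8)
The plan is to exploit the fact that $\curl$ is an axial (pseudo-vector) operator, whereas $|D|=(-\Delta)^{1/2}$ is a rotation- and reflection-invariant \emph{scalar} Fourier multiplier: an orientation-reversing change of variables therefore flips the relative sign in the identity $\curl u=\pm|D|u$, and this is exactly what exchanges the two spins.

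\textbf{Step 1: reduce to a linear reflection.} A planar symmetry of $\R^3$ is a map $x\mapsto Sx+c$ with $S\in O(3)$, $S^\top=S$, $S^2=\Id$ and $\det S=-1$. Since $\curl$, $|D|$ and $\div$ are all translation-invariant, the vector $c$ plays no role and I may assume $c=0$. For a vector field $u$ the mirror image is then $u_S(x):=S\,u(Sx)$ (recall $S^{-1}=S$), and $\norme[L^2]{u_S}=\norme[L^2]{u}$ since $S$ is orthogonal.

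\textbf{Step 2: transformation laws.} I would record three elementary identities, obtained from the chain rule together with $A(a\times b)=(\det A)(Aa)\times(Ab)$ and the $O(3)$-invariance of $-\Delta$:
\[
\div u_S(x)=(\div u)(Sx),\qquad
(\curl u_S)(x)=(\det S)\,S\,(\curl u)(Sx),\qquad
(|D|u_S)(x)=S\,(|D|u)(Sx).
\]
The first shows $u_S$ is again divergence-free; the factor $\det S=-1$ in the second is the crux of the matter.

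\textbf{Step 3: conclude.} If $u$ has positive spin then, being divergence-free, it satisfies $\curl u=|D|u$ by Remark~\ref{rem13}. Feeding this into the last two identities of Step~2 gives $\curl u_S(x)=(\det S)\,S\,(|D|u)(Sx)=-\,(|D|u_S)(x)$, so $\curl u_S=-|D|u_S$, i.e. $u_S$ has negative spin; the negative-spin case follows by applying this to $u_S$ and using $(u_S)_S=u$. The only point that genuinely needs care is the transformation law of $\curl$ — specifically the sign $\det S=-1$, which is precisely why chirality is a $3$D phenomenon; it can be checked directly from the symbol~\eqref{symbol_rot} in coordinates, or more transparently deduced from the behaviour of the cross product under $O(3)$ recalled in Appendix~\ref{appendix}. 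Equivalently, one may argue entirely on the Fourier side: from $\widehat{u_S}(\xi)=S\,\hat u(S\xi)$ and $\rot(\xi)\,S=(\det S)\,S\,\rot(S\xi)$ one gets that the rank-one projections obey $\mathbb{Q}_\pm(\xi)\,S=S\,\mathbb{Q}_\mp(S\xi)$, so composition with the mirror map sends $\mathbb{Q}_+L^2$ onto $\mathbb{Q}_-L^2$ and back, which is exactly the assertion.
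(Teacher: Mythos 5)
Your proof is correct, and it goes by a somewhat different route than the paper's. The paper simply takes the specific reflection $v(x_1,x_2,x_3)=u(x_1,x_2,-x_3)$ and reads the sign flip off the explicit eigenvectors: from~\eqref{eigenvector} one checks that $S\delta_\pm(S\xi)=\delta_\mp(\xi)$ for $S=\operatorname{diag}(1,1,-1)$, so the decomposition~\eqref{uSpin} of the mirrored field has its weights $\vartheta_+$ and $\vartheta_-$ exchanged. You instead work with the intrinsic characterization $\curl u=\pm|D|u$ of Remark~\ref{rem13} and the pseudovector transformation law $\curl u_S=(\det S)\,S(\curl u)(S\cdot)$, with the reflection-invariance of the scalar multiplier $|D|$. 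What your version buys: it handles an arbitrary planar reflection in one stroke (the paper's reduction to $x_3\mapsto -x_3$ is legitimate but tacitly uses that rotations preserve spin), it avoids the coordinate chart $\delta_\pm$, which is singular along the axis $\xi_2=\xi_3=0$ as the paper itself warns, and it makes the conceptual point explicit: chirality of the spin comes from $\curl$ being axial while $|D|$ is polar. You are also more careful than the paper about what ``mirror image'' of a vector field means: the pushforward $u_S(x)=S\,u(Sx)$ is the right notion (mere precomposition $u(Sx)$ would not even preserve $\div u=0$), and your Step~2 verifies this. Your closing Fourier-side identity $\mathbb{Q}_\pm(\xi)S=S\,\mathbb{Q}_\mp(S\xi)$ is in fact the projector-level restatement of the paper's eigenvector computation, so the two arguments meet there. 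No gaps.
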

\begin{proof}
Without impeding on the generality, one may assume that  $v(x_1,x_2,x_3)=u(x_1,x_2,-x_3)$. It is then clear
from~\eqref{uSpin} and~\eqref{eigenvector} that the two fields $u$ and $v$ have opposite spins.
\cqfd\end{proof}

\begin{figure}[h!]
\begin{center}
\captionsetup{width=.9\linewidth}
\includegraphics[width=\textwidth]{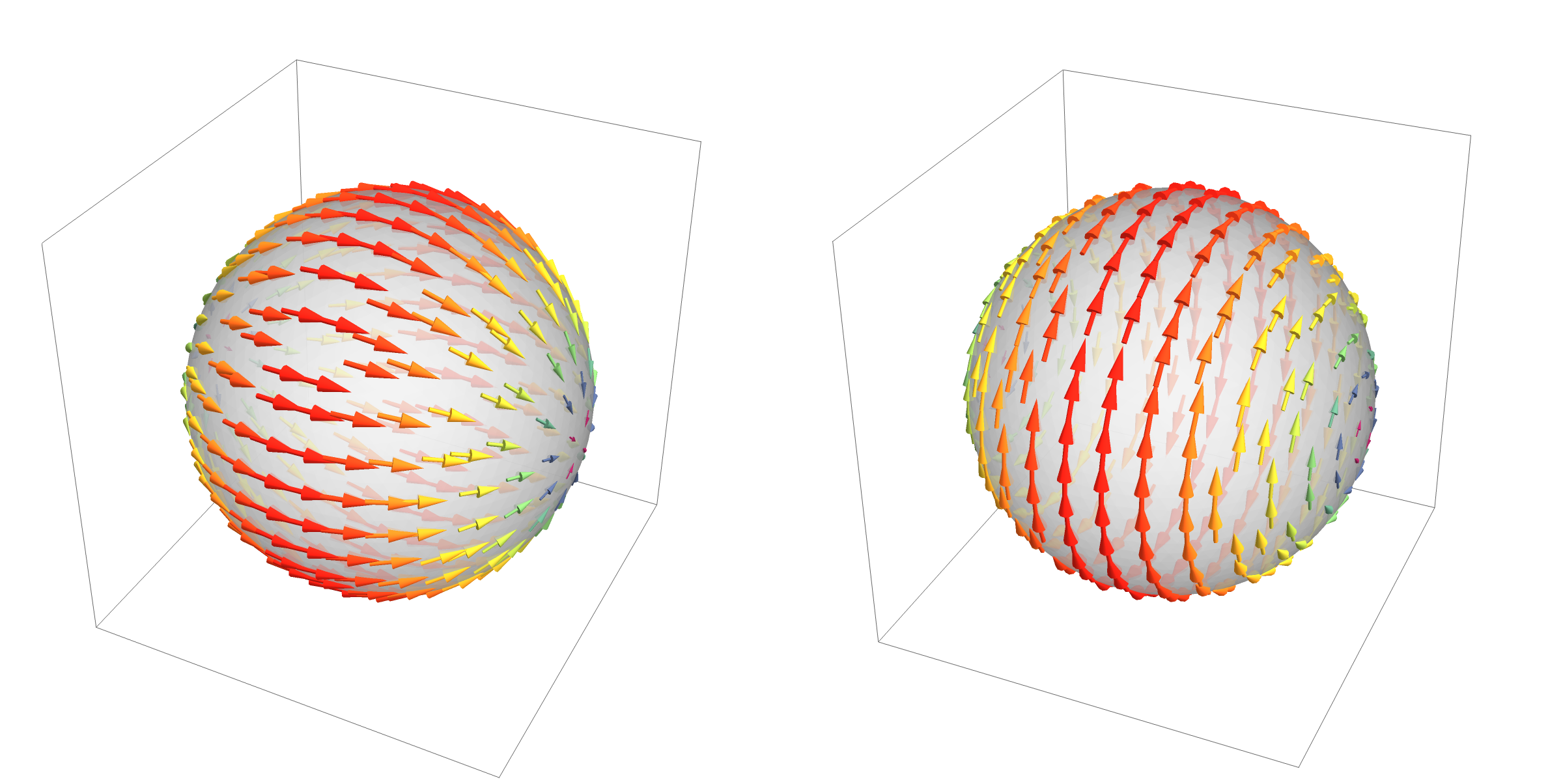}
\caption{\label{fig:delta_p}\small\it 
Real (left) and imaginary (right) parts of  $\delta_+(\xi)$ on the unit sphere $|\xi|=1$.
Multiplication by a suitable prefactor in $\C$ can rotate the axis (and the apparent singularity)
of $\delta_+(\xi)$ to any point on the sphere (the axis for the real and imaginary parts are the same).
One obtains~$\delta_-(\xi)$ by complex conjugation of~$\delta_+(\xi)$; therefore, the imaginary part of the Fourier field ``flows'' the
other way around in $\C^3$.
}% caption
\end{center}
\end{figure}

\begin{figure}[h!]
\begin{center}
\captionsetup{width=.85\linewidth}
\includegraphics[width=\textwidth]{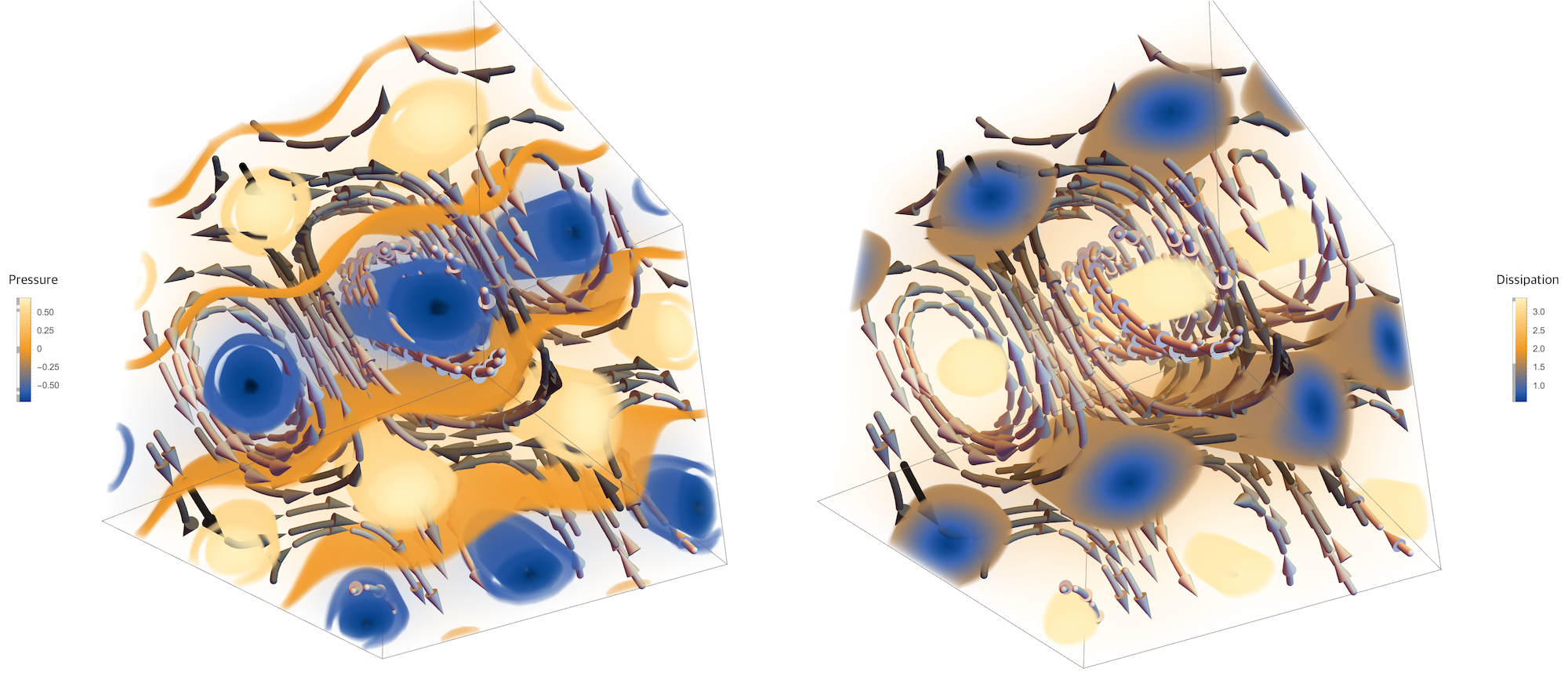}\\
\includegraphics[width=\textwidth]{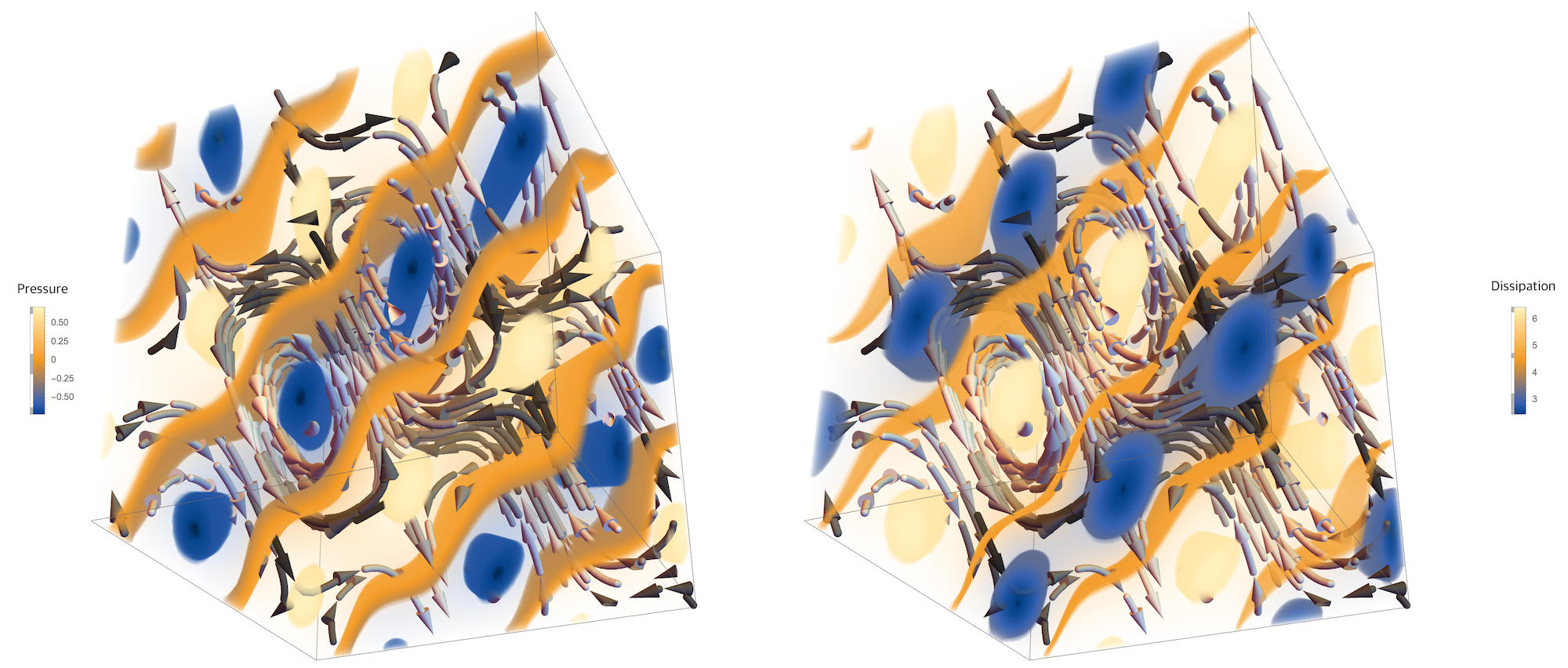}
\caption{\label{fig:pos_spin}\small\it 
Two examples of non-trivial divergence-free fields, with positive spin in the periodic setting $x\in\mathbb{T}^3$.
Above: field $u_1$ (Beltrami); below: field $u_2$ (not generalized Beltrami).
Left: streamlines of $u_j(x)$ over the pressure field. Right:  streamlines of $u_j(x)$ over the intensity of the dissipation field.
Units are arbitrary. Observe the righ-hand side motion.
}% caption
\end{center}
\end{figure}

The family of spin-definite vector fields is quite rich and appears to have a tubular jet-structure,
where the sign of the spin reflects whether the forward motion is right- or left-handed.
For example,
\begin{gather*}
u_1(x) = -\frac{1}{2}\left( \cos(x_1-x_2) + 2\sin(x_2+x_3) \right) \vv{e_1}
- \frac{1}{2}\left( \cos(x_1-x_2) + \sqrt{2}\cos(x_2+x_3) \right) \vv{e_2}\\
+  \frac{\sqrt{2}}{2}\left( \sin(x_1-x_2) + \cos(x_2+x_3)\right)
\vv{e_3}
\end{gather*}
\begin{gather*}
u_2(x) = -\frac{1}{5}\left( 4\cos(x_1-2x_2) + 5\sin(x_2+x_3) \right) \vv{e_1}
- \frac{1}{10}\left( 4\cos(x_1-2x_2) + 5\sqrt{2}\cos(x_2+x_3) \right) \vv{e_2}\\
+  \frac{1}{10}\left( 4\sqrt{5}\sin(x_1-2x_2) + 5\sqrt{2} \cos(x_2+x_3)\right)
\vv{e_3}
\end{gather*}
are divergence-free and have positive spin \ie $\rot u_j = |D| u_j$. They are illustrated in Figure~\ref{fig:pos_spin}.

\smallskip
Note that $\rot u_1 = \sqrt{2} u_1$ so this example is a Beltrami flow; $\hat{u}_1$ is supported on the spectral sphere of radius $\sqrt{2}$.
On the contrary,~$\rot(\rot u_2 \times u_2)\neq 0$ so this second example is not even a generalized Beltrami flow; $\hat{u}_2$ involves frequencies of magnitudes $\sqrt{2}$ and $\sqrt{5}$. However, both are clearly the superposition of two planar Beltrami waves of positive spin (\ie flows from Example \ref{planarBeltrami}) that progress in different directions.
Both flows have a similar structure: they swirl in a right-hand fashion, the center of each vortex
is a zone of low pressure and high dissipation, the four hyperbolic corners of each cell (where the convection diverges) are axes of high pressure with minimal dissipation. Accounting for box-periodicity, these two examples display one single continuous vortex tube.

\medskip
If we superpose three or more planar Beltrami waves of positive spin, one can build more refined flows with positive spin
that contain an intricate network of vortex tubes.
\textbf{The positive spin imposes that the movement remains exclusively right-handed at all scales}. In the example shown in Figure~\ref{fig:pos_spin3},
four distinct regions (accounting for periodicity) of high vorticity appear to be disconnected, \ie one generates vortex tubes
of finite length.

\begin{figure}[h!]
\begin{center}
\captionsetup{width=.85\linewidth}
\includegraphics[width=\textwidth]{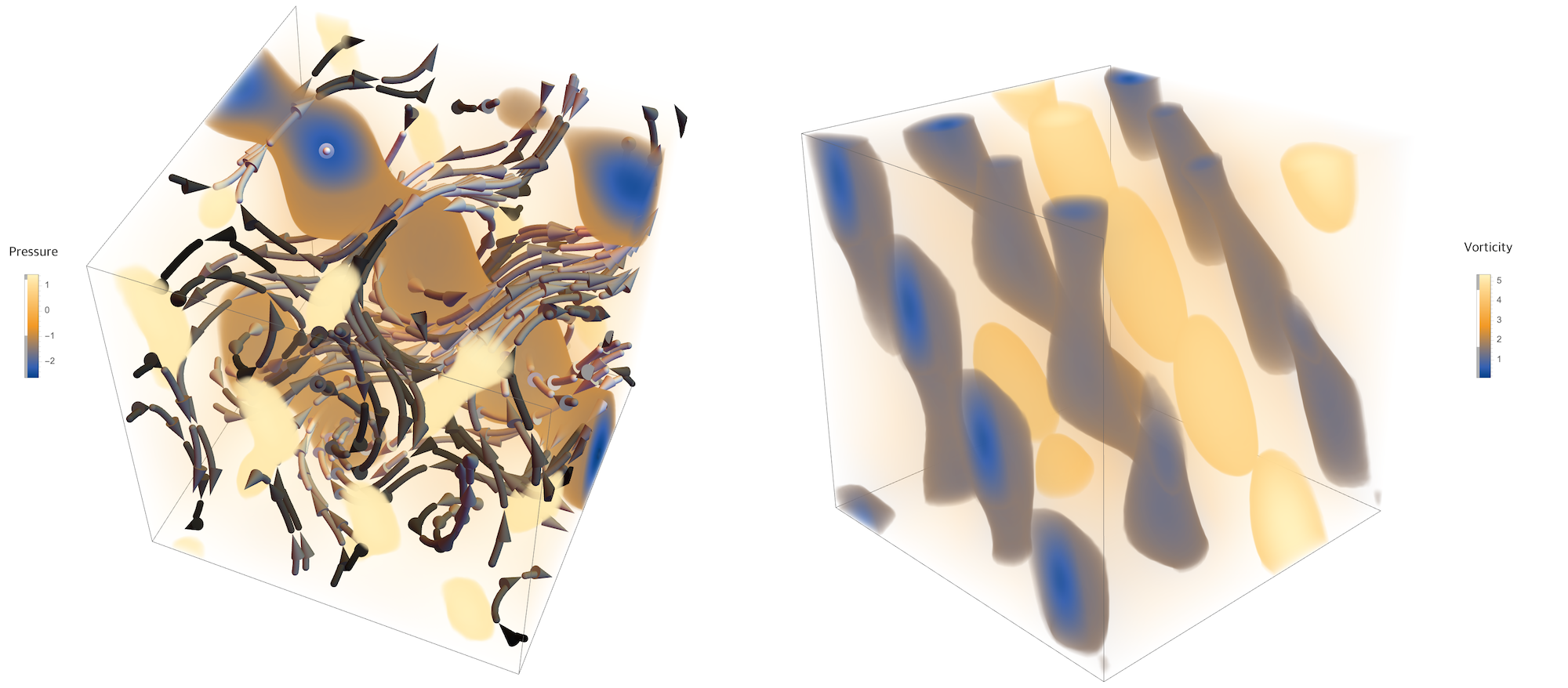}
\caption{\label{fig:pos_spin3}\small\it 
A third example of a non-trivial divergence-free field, with positive spin in the periodic setting $x\in\mathbb{T}^3$.
The field is constructed as the superposition of three planar Beltrami waves with linearly independent directions.
Left: streamlines of $u_j(x)$ over the pressure field. Right:  intensity of the vorticity field.
Units are arbitrary. The viewpoint is slightly different for better legibility. Four vortex filaments occur in the high-pressure region.
}% caption
\end{center}
\end{figure}

These examples suggest that the family of spin-definite flows is structurally simple (superposition of planar
Beltrami waves) and yet quite rich. It is the building blocks of intricate vortex structures and deserves to be studied specifically,
as we will now do.

\begin{rmk}
The question of defining a microlocal notion of spin is legitimate\footnote{The notion of \textit{spin} introduced in this
article could then reasonably be called \textit{Fourier spin} to insist on its global nature.},
albeit non-trivial because the operators $\C\pm|D|$ are non-local.
If $u$ is a divergence-free field, there exists a stream vector (\ie vector potential) $\Psi$ such that $u=\rot \Psi$.
It is given by $\Psi=|D|^{-2}\rot u + \nabla q$ where $\nabla q$ is an arbitrary irrotational component, \eg $q=0$. If one is interested
only in the local behavior of the flow near a point $x_0\in\R^3$, one could consider a smooth cut-off
function $\chi\in \mathcal{D}(\R^3)$ supported in a ball of radius $r>0$ and such that $\chi(x)  = 1$ if $|x|\leq r/2$.
The field
\[
\tilde{u} = \rot\left( \chi(x-x_0) \Psi(x) \right) =  \chi(x-x_0) u(x) + \underbrace{\nabla  \chi(x-x_0) \times \Psi}_{\substack{\text{recirculation around}\\\text{the cutout zone}}}
\]
remains divergence-free, coincides with $u$ on the ball $B(x_0,r/2)$ and is compactly supported on $B(x_0,r)$.
The two spin-definite components of $\tilde{u}$ can be seen as a local expression of the spin of the original field $u$ near $x_0$.
However, the recirculation of $\tilde{u}$ near the edge of the cutoff zone may shadow the meaning of the spin at low frequencies,
so a secondary microlocal cutout to isolate frequencies $|\xi|\gg r^{-1}$ may be necessary.
We will not investigate this question further in this article.
\end{rmk}

\section{Two integral quantities preserved by the Navier-Stokes evolution}\label{par:twoIntegrals}

In this section, we revisit the classical energy balance for Navier-Stokes in the light of the
aforementioned properties of the $\curl$ operator over $\mathbb{P}L^2 = \mathbb Q_{+} L^{2} \oplus \mathbb Q_{-} L^{2}$.

\subsection{Classical energy method}
Leray's method was introduced in 1934 in the seminal article  \cite{MR1555394}.
It consists in  multiplying  \eqref{nsi++}  by $u$ to get
$$
\frac{d}{dt}\norm{u(t)}_{L^{2}}^{2}+2\poscal{\mathbb P(\rot u\times u)}{u}_{L^{2}}+2\nu \norm{\rot u}_{L^{2}}^{2}=0.
$$
Since $\mathbb P^{*}u=\mathbb P u=u$,  the non-linear term formally cancels out
\begin{equation}\label{formal_energy}
\poscal{\mathbb P(\rot u\times u)}{u}=\poscal{\rot u\times u}{u}=\det(\rot u, u, u)=0.
\end{equation}
This leads to the classical energy balance
$$
\norm{u(t)}_{L^{2}}^{2}+2\nu\int_{0}^{t}\norm{\!\rot u}_{L^{2}}^{2} dt'=\norm{u(0)}_{L^{2}}^{2},
$$
which, truthfully, only holds for smooth solutions in the three-dimensional case. As Leray solutions
are obtained as limits of compact sequences $(u_n)_{n\in\N}$ that satisfy the energy equality but
converge to $u$ only weakly in $H^1$, Fatou's lemma implies
\begin{equation}\label{energy_balance}
\norm{u(t)}_{L^{2}}^{2}+2\nu\int_{0}^{t}\norm{\!\rot u}_{L^{2}}^{2} dt'\leq\norm{u(0)}_{L^{2}}^{2}.
\end{equation}
The possibility of anomalous dissipation, \ie a strict inequality in~\eqref{energy_balance}, was
envisioned by Onsager~\cite{O49} and formalized \eg in~\cite{DR2000}.
Onsager's conjecture on the minimal regularity assumption on $u$ that is necessary to ensure~\eqref{energy_balance} was
solved recently by the conjonction of the works of Isett~\cite{I2018} and Constantin, Weinan \& Titi~\cite{CWT1994}.
Soon afterwards, its importance was renewed by the construction of Buckmaster, Vicol~\cite{MR3898708}
of wild (\ie non-Leray) solutions of Navier-Stokes that defy any physically reasonable energy balance (their
energy profile can even be prescribed arbitrarily)
even though they belong to a reasonable function space  $C^0_t(H^\sigma_x)$  for some $\sigma>0$, typically $\sigma\simeq 2^{-18}$.
For further details, see~\S\ref{par:onsager_anew} below.

\subsection{Conservation law associated with the signed $\curl$}\label{par:conservation}

Let us define the following quantities:
\begin{equation}\label{222}
N_{\pm}(u,t)=\norm{\rot _{\pm}^{1/2} u(t)}_{L^{2}}^{2}
+2\nu\int_{0}^{t}\norm{\rot _{\pm }^{3/2} u}_{L^{2}}^{2} dt'.
\end{equation}
Thanks to the results of \S\ref{par:curl_signed}, the sum $N_{+}(u,t)+N_{-}(u,t)$ is equivalent, for divergence-free vector fields,
to the square of the norm of $u$ in~$L^\infty_t \dot H^{1/2}_x \cap L^{2}_{t}\dot H^{3/2}_{x}$.
Inspired by the negative sign of the $\curl$ on $\mathbb{Q}_-L^2$,  let us now turn our attention to  the Krein
\cite{K1965} ``norm'' $N_{+}(u,t)-N_{-}(u,t)$.

\begin{prop}
Let $u$ be a smooth solution of \eqref{nsi++}. The following conservation law then holds:
\begin{equation}\label{conservation_NpNm}
N_{+}(u,t)-N_{-}(u,t)=N_{+}(u,0)-N_{-}(u,0).
\end{equation}
\end{prop}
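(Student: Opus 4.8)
The plan is to reduce the Krein combination $N_+-N_-$ to the helicity together with its viscous dissipation, and then recognize the resulting balance as a determinant identity with a repeated row, exactly in the spirit of~\eqref{formal_energy}.

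First I would exploit Lemma~\ref{lem13}. Since $\mathbb Q_\pm$ are orthogonal projections commuting with $\val D$, and $\rot_\pm=\val D\mathbb Q_\pm\ge0$ is self-adjoint, functional calculus (uniqueness of the positive square root) gives $\rot_\pm^{1/2}=\val D^{1/2}\mathbb Q_\pm$ and $\rot_\pm^{3/2}=\val D^{3/2}\mathbb Q_\pm$, both self-adjoint. Hence $\norm{\rot_\pm^{1/2}u}_{L^2}^2=\poscal{\rot_\pm u}{u}$ and $\norm{\rot_\pm^{3/2}u}_{L^2}^2=\poscal{\rot_\pm^{3}u}{u}$. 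Subtracting and using $\rot_+-\rot_-=\rot$ together with $\rot_+^3-\rot_-^3=\val D^3(\mathbb Q_+-\mathbb Q_-)=\rot^3=\val D^2\rot$ (which follows from $(\mathbb Q_+-\mathbb Q_-)^3=\mathbb Q_+-\mathbb Q_-$ on $\mathbb P L^2$ and from $\rot^2=\val D^2\mathbb P$), I obtain
$$N_+(u,t)-N_-(u,t)=\poscal{\rot u(t)}{u(t)}_{L^2}+2\nu\int_0^t\poscal{\rot^3 u}{u}_{L^2}\,dt',$$
that is, the helicity of $u(t)$ plus the time integral of its dissipation rate.

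The step that does the real work is then the helicity balance $\frac{d}{dt}\poscal{\rot u}{u}=-2\nu\poscal{\rot^3 u}{u}$. As $\rot$ is self-adjoint and time-independent, $\frac{d}{dt}\poscal{\rot u}{u}=2\poscal{\rot u}{\p_t u}$. I substitute $\p_t u=-\mathbb P(\rot u\times u)-\nu\rot^2 u$ from~\eqref{nsi++}. For the nonlinear term, $\mathbb P=\mathbb P^\ast$ and $\mathbb P\rot u=\rot u$ (the range of $\rot$ is divergence-free), so $\poscal{\rot u}{\mathbb P(\rot u\times u)}=\poscal{\rot u}{\rot u\times u}=\int_{\R^3}\det(\rot u,\rot u,u)\,dx=0$: the nonlinearity vanishes because it is a determinant with two equal rows. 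For the viscous term, $\poscal{\rot u}{\rot^2 u}=\poscal{\rot^3 u}{u}$ by moving two copies of the self-adjoint operator $\rot$. This gives the announced balance, hence $\frac{d}{dt}\bigl(N_+(u,t)-N_-(u,t)\bigr)=0$, and integrating on $[0,t]$ (the dissipation integral being $0$ at $t=0$) yields~\eqref{conservation_NpNm}.

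For a smooth solution there is no genuine analytic obstacle; the only points deserving a line of care are the functional-calculus identities $\rot_\pm^{s}=\val D^{s}\mathbb Q_\pm$ and the justification of differentiating $t\mapsto\poscal{\rot u(t)}{u(t)}$ under whatever regularity and decay make all of $N_\pm(u,t)$ finite (e.g. $u\in L^\infty_t\dot H^{1/2}_x\cap L^2_t\dot H^{3/2}_x$ with enough time regularity). The conceptual content is simply that $N_+-N_-$ packages the helicity together with its own dissipation, so this combination is frozen by the Navier-Stokes flow.
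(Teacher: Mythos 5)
Your proof is correct and follows essentially the same route as the paper: test the equation against $\rot u$, kill the nonlinear term as a determinant with a repeated entry, and convert $\poscal{\rot^k u}{u}$ for $k=1,3$ into the signed quantities via $\rot_\pm^{k/2}=\val D^{k/2}\mathbb Q_\pm$. The only cosmetic difference is that you perform the reduction of $N_+-N_-$ to helicity plus dissipation at the start rather than at the end.
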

\begin{proof}
Thanks to the self-adjointness of the $\curl$, one has $\poscal{\rot u}{\partial_t u}_{L^2} = \poscal{\partial_t \rot u}{u}_{L^2}$ pointwise
in time.
 Let us multiply the equation~\eqref{nsi++} by $2\rot u$. We get 
 $$
 \frac{d}{dt}\poscal{\rot u(t)}{u(t)}_{L^{2}}
 +2\poscal{\mathbb P(\rot u\times u)}{\rot u}_{L^{2}}
 +2\nu \poscal{\rot ^{3}u}{u}_{L^{2}}=0.
 $$
For smooth vector fields, the cubic term vanishes since 
 \begin{equation}\label{formal_curlergy}
 \poscal{\mathbb P(\rot u\times u)}{\rot u}=\poscal{\rot u\times u}{\rot u}=\det(\rot u, u, \rot u)=0.
 \end{equation}
The lemma then follows, with $k=1$ or $3$, from the identities $\rot^k = (\rot_+-\rot_-)^k = \rot_+^k + (-1)^k \rot_-^k$ and
\[
\poscal{\rot^k u}{u}_{L^2} = \norme[L^2]{\rot_+^{k/2} u}^2+ (-1)^k \norme[L^2]{\rot_-^{k/2} u}^2,
\]
which are a consequence of the diagonalization of the $\curl$ obtained in~\S\ref{par:curl_signed}.
\cqfd\end{proof}

\bigskip
For Leray solutions, the pendant of the conservation law~\eqref{conservation_NpNm} is not obvious.
For example, it is not clear how
$N_{+}(u,t)-N_{-}(u,t)$ compares to $N_{+}(u,0)-N_{-}(u,0)$
for all Leray solutions (see \S\ref{par:non_explosion} below).
However, if one considers the first singularity event,
the following result expresses that singularities for the 3D Navier-Stokes equation \textbf{can only occur as the
result of a direct conflict of spin}.

\begin{thm}\label{mainThm1}
If $u$ is a smooth solution of Navier-Stokes on $[0,T^\ast)$ with a maximal life-time $T^\ast <\infty$, then
\begin{equation}
  \limsup_{t\to T^\ast}{N_{\pm}(u,t)}=+\io
  \qquad\text{and}\qquad
  \lim_{k}\frac{N_{+}(u,t_{k})}{N_{-}(u,t_{k})}=1
\end{equation}
for some increasing sequence of times $t_k\to T^\ast$.
\end{thm}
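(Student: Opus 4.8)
The plan is to exploit the conservation law \eqref{conservation_NpNm} together with the blow-up of the full quantity $N_+(u,t)+N_-(u,t)$, which is controlled from below by a critical Sobolev norm. First I would argue that $N_+(u,t)+N_-(u,t)$ cannot stay bounded as $t\to T^\ast$: since $N_+(u,t)+N_-(u,t)$ is equivalent to $\norme[L^\infty_t\dot H^{1/2}_x\cap L^2_t\dot H^{3/2}_x]{u}^2$ (as noted just after \eqref{222}), a uniform bound would in particular give $u\in L^\infty_t\dot H^{1/2}_x$, contradicting \eqref{seregin} (or the Escauriaza--Seregin--\v Sver\'ak theorem via $\dot H^{1/2}\hookrightarrow L^3$), which forces $\norme[\dot H^{1/2}]{u(t)}\to\infty$ at a finite blow-up time. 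Hence $\limsup_{t\to T^\ast}\bigl(N_+(u,t)+N_-(u,t)\bigr)=+\infty$, and one may pick an increasing sequence $t_k\to T^\ast$ along which $N_+(u,t_k)+N_-(u,t_k)\to+\infty$.

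Next I would feed this into the conserved difference. Write $c_0:=N_+(u,0)-N_-(u,0)$, a finite constant. By \eqref{conservation_NpNm}, $N_+(u,t_k)-N_-(u,t_k)=c_0$ for all $k$. Combining with $N_+(u,t_k)+N_-(u,t_k)\to+\infty$ gives immediately
\begin{equation}
N_\pm(u,t_k)=\frac{\bigl(N_+(u,t_k)+N_-(u,t_k)\bigr)\pm c_0}{2}\longrightarrow+\infty,
\end{equation}
so $\limsup_{t\to T^\ast}N_\pm(u,t)=+\infty$ for each sign (and along the same sequence). Dividing,
\begin{equation}
\frac{N_+(u,t_k)}{N_-(u,t_k)}=\frac{N_-(u,t_k)+c_0}{N_-(u,t_k)}=1+\frac{c_0}{N_-(u,t_k)}\longrightarrow 1,
\end{equation}
since $N_-(u,t_k)\to+\infty$ and $c_0$ is fixed. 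This yields both assertions of the theorem.

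The only genuinely non-routine point is justifying that $N_+(u,t)+N_-(u,t)$ must blow up, i.e.\ that a critical norm explodes; everything else is elementary algebra with the conservation law. Here one has a choice of input: the cleanest is to invoke \eqref{seregin} together with the embedding $\dot H^{1/2}(\R^3)\hookrightarrow L^3(\R^3)$, which shows $\norme[\dot H^{1/2}]{u(t)}\to\infty$; alternatively one could use the subcritical Leray blow-up rate \eqref{lerayBlowupBootstrap} and interpolation, but that would only give $\limsup$, which is in fact all that is needed for the stated conclusion. I would present the $\dot H^{1/2}$/$L^3$ route since it meshes directly with the function space $L^\infty_t\dot H^{1/2}_x$ in which $N_++N_-$ naturally lives, and it also makes transparent why the result is genuinely three-dimensional (the $\curl$ has the eigenvalues $\pm|D|$ only in 3D). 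One should also remark that smoothness of $u$ on $[0,T^\ast)$ is what legitimizes \eqref{conservation_NpNm} and the equivalence of $N_\pm$-norms with Sobolev norms on each slice, so no regularity subtlety arises before $T^\ast$.
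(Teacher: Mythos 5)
Your proof is correct and follows essentially the same route as the paper: the conserved difference $N_+-N_-$ from \eqref{conservation_NpNm} plus the blow-up of the critical norm forces both $N_\pm$ to diverge along a common sequence with ratio tending to $1$. The only (inconsequential) difference is the input used for the divergence of $N_++N_-$: you invoke Seregin/ESS via $\dot H^{1/2}\hookrightarrow L^3$, whereas the paper cites the Gallagher--Koch--Planchon blow-up of critical norms.
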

An attempt at a physical interpretation of this result is proposed in~\S\ref{par:2D} below.
\smallskip
\begin{proof}
As $u$ is smooth on $[0,T^\ast)$, the conservation law~\eqref{conservation_NpNm} holds for any $t<T^\ast$ and
\[
|N_+(u,t)-N_-(u,t)|\leq C_0
\]
with \eg $C_0=\left|\int_{\R^3} \omega_0 \cdot u_0\right|$ according to~\eqref{helicity} below.
Thanks to~\cite{GKP2016}, the sum $N_+(u,t)+N_-(u,t)$ and therefore at least one of the two norms $N_\pm(u,t)$ must diverge in lim-sup as $t\to T^\ast$.
As the difference remains bounded, both norms $N_\pm(u,t)$ must diverge simultaneously.
One obtains an increasing sequence $t_k\to T^\ast$ such that
\[
N_{+}(u,t_{k})\geq C_0+k \quad\text{and therefore}\quad N_{-}(u,t_{k})\geq k.
\]
Then $|N_+(u,t_k)/N_-(u,t_k) -1|\leq C_0/k\to 0$.
\cqfd\end{proof}

\subsection{Helicity}
\label{par:helicity}

Using the properties of $\rot_\pm$ exposed in~\S\ref{par:curl_signed}, one recovers the helicity:
\begin{equation}\label{helicity}
\mathcal{H}(t) = \int_{\R^3} \omega \cdot u = \poscal{(\rot_+-\rot_-) u}{u}_{L^2} =
\norm{\rot _{+}^{1/2} u(t)}_{L^{2}}^{2} - \norm{\rot _{-}^{1/2} u(t)}_{L^{2}}^{2}.
\end{equation}
More generally, the quantity $N_+-N_-$ can be written as a conservation law for helicity:
\begin{equation}\label{helicity_balance}
N_+(u,t)-N_-(u,t)
=\int_{\R^3} \omega \cdot u - 2\nu \int_0^t  \int_{\R^3} \omega\cdot \Delta u
=\int_{\R^3} \omega \cdot u + 2\nu \int_0^t  \int_{\R^3} \nabla \omega\cdot \nabla u.
\end{equation}
The previous results imply that,
for smooth solutions of the Euler equation, $\mathcal{H}(t)$ is conserved and that for smooth solutions of Navier-Stokes,
the quantity~\eqref{helicity_balance} is invariant.
The benefit of using the non-local diagonalization of the $\curl$ operator (\ie the $\rot_\pm$ operators) is that
this new point of view isolates two distinct signed quantities $N_\pm$ in the  balance of helicity,
which is really not obvious in the right-hand side of~\eqref{helicity_balance}.
Helicity thus appears as \textbf{a measure of the balance between the spin-definite} components of $u$.

\medskip
Let us also point out that~\eqref{helicity} and Lemma~\ref{lem13} imply immediately
\begin{equation}
|\mathcal{H}(t)| \leq \norm{\rot _{+}^{1/2} u(t)}_{L^{2}}^{2} + \norm{\rot _{-}^{1/2} u(t)}_{L^{2}}^{2} = \norme[\dot{H}^{1/2}]{u}^2.
\end{equation}
One recovers the classical estimate $|\mathcal{H}(t)| = \left|\poscal{|D|^{-1/2}\rot u}{|D|^{1/2}u}\right| \leq C \norme[H^{1/2}]{u}^2$,
which relies on the fact that the operator $|D|^{-1}\rot$ is obviously bounded on $L^2$.

\begin{rmk}\label{extended_orthogonality}
Note that, contrary to the phrasing of most proofs,
the conservation of helicity does \textit{not} result from a global cancellation of terms;
instead, each term (and sub-term) given by the respective evolution equations for $u$ and $\omega$ vanishes on its own:
\[
\int_{\R^3} (\partial_t \omega) \cdot u + \nu\poscal{\nabla u}{\nabla\omega}_{L^2}
 = \poscal{(u\cdot\nabla) \omega}{u}_{L^2} + \poscal{(\omega\cdot\nabla) u}{u}_{L^2}= 0+0=0
\]
and
\[
\int_{\R^3} (\partial_t u) \cdot \omega + \nu\poscal{\nabla u}{\nabla\omega}_{L^2}
 = \poscal{(u\cdot\nabla)u}{\omega} + \poscal{\nabla p}{\omega} = 0+0=0.
\]
Indeed, using the self-adjointness of the $\curl$ twice,
the identity~\eqref{curlInProd} implies (either formally or for smooth~$u$) that,
on average, the convection term $(u\cdot\nabla)u$ is orthogonal to the vorticity:
\begin{equation}\label{convection_orth_vorticity}
\poscal{w}{(u\cdot\nabla) u}_{L^2} = \poscal{\curl u}{(u\cdot\nabla) u}_{L^2} 
=\poscal{ u}{\curl[(u\cdot\nabla) u]}_{L^2} = \poscal{ u}{\curl[\omega \times u]}_{L^2}  = \poscal{ \omega}{\omega \times u}_{L^2}=0.
\end{equation}
If $u$ is divergence-free, one has the well known identity:
\[
\poscal{\omega}{(u\cdot\nabla)u}_{L^2}+\poscal{u}{(u\cdot\nabla)\omega}_{L^2} =
-\poscal{\div u}{u\cdot \omega} = 0.
\]
Combining this last identity with~\eqref{convection_orth_vorticity}, one gets that
the transport term $(u\cdot\nabla)\omega$ is, on average, orthogonal to the velocity field:
\begin{equation}\label{transp_orth_velocity}
\poscal{u}{(u\cdot\nabla)\omega}_{L^2} = 0.
\end{equation}
Finally, as $\dive\omega=0$, and assuming enough decay at infinity:
\begin{equation}\label{vortranport}
\poscal{(\omega\cdot\nabla)u}{u}_{L^2} = \frac{1}{2} \int_{\R^3} (\omega\cdot\nabla)|u|^2
= \frac{1}{2} \int_{\R^3} \dive(|u|^2 \omega) = 0.
\end{equation}
The identities~\eqref{convection_orth_vorticity}, \eqref{transp_orth_velocity}, \eqref{vortranport} provide a simple derivation of the conservation of helicity for the Euler equation, which holds as long as it is legitimate to test the
equation for vorticity~\eqref{eq_vorticity} against $u$ itself.
\end{rmk}

\medskip
The connection with helicity provides the following uniform integral bounds
that imply that the two spin-definite components of $u$ must have, on average, a comparable size in $\dot{H}^{1/2}$.
Note however that the last result of~\S\ref{par:conservation} provides a stronger insight at the time of first singularity.
\begin{prop}\label{balanceLeray}
For any Leray solution of Navier-Stokes, one has
\begin{equation}
\int_0^\infty |\mathcal{H}(t)|^2 dt 
=
\int_0^\infty \left( \norm{\rot _{+}^{1/2} u(t)}_{L^{2}}^{2} - \norm{\rot _{-}^{1/2} u(t)}_{L^{2}}^{2} \right)^2 dt
\leq  \frac{\norme[L^2]{u_0}^4}{8\nu}
\end{equation}
and
\begin{equation}
\norme[L^4_t \dot{H}^{1/2}_x]{u}^2=
\int_0^\infty \left( \norm{\rot _{+}^{1/2} u(t)}_{L^{2}}^{2} + \norm{\rot _{-}^{1/2} u(t)}_{L^{2}}^{2} \right)^2 dt
\leq  \frac{\norme[L^2]{u_0}^2}{4\sqrt{2\nu}}.
\end{equation}
\end{prop}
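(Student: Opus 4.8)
The plan is to use the helicity estimate $|\mathcal{H}(t)| \le \norme[\dot{H}^{1/2}]{u(t)}^2 = \norm{\rot_+^{1/2}u(t)}_{L^2}^2 + \norm{\rot_-^{1/2}u(t)}_{L^2}^2$ already established just above, together with the energy inequality~\eqref{energy_balance} (in its time-global form), and run a Cauchy--Schwarz / interpolation argument. The key observation is that $\norme[\dot{H}^{1/2}]{u}^2$ interpolates geometrically between $\norme[L^2]{u}^2$ and $\norme[\dot{H}^1]{u}^2 = \norm{\rot u}_{L^2}^2$: precisely, $\norme[\dot{H}^{1/2}]{u}^2 \le \norme[L^2]{u}\,\norm{\rot u}_{L^2}$, which follows from $\int |\xi|\,|\hat u|^2\,d\xi \le \big(\int |\hat u|^2\big)^{1/2}\big(\int |\xi|^2 |\hat u|^2\big)^{1/2}$ by Cauchy--Schwarz in $\xi$. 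Since for divergence-free $u$ one has $\norm{\rot u}_{L^2}^2 = \norm{\nabla u}_{L^2}^2$, the enstrophy flux is controlled by the energy.

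First I would write, for any Leray solution, $\sup_{t\ge 0}\norme[L^2]{u(t)}^2 \le \norme[L^2]{u_0}^2$ and $2\nu\int_0^\infty \norm{\rot u(t)}_{L^2}^2\,dt \le \norme[L^2]{u_0}^2$, both coming from~\eqref{energy_balance}. Then, using the interpolation inequality above,
\[
\int_0^\infty \norme[\dot{H}^{1/2}]{u(t)}^4\,dt \;\le\; \int_0^\infty \norme[L^2]{u(t)}^2\,\norm{\rot u(t)}_{L^2}^2\,dt \;\le\; \norme[L^2]{u_0}^2 \int_0^\infty \norm{\rot u(t)}_{L^2}^2\,dt \;\le\; \frac{\norme[L^2]{u_0}^4}{2\nu}.
\]
That already gives $\norme[L^4_t\dot H^{1/2}_x]{u}^2 = \big(\int_0^\infty \norme[\dot H^{1/2}]{u}^4\,dt\big)^{1/2} \le \norme[L^2]{u_0}^2/\sqrt{2\nu}$. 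To sharpen the constant to $\norme[L^2]{u_0}^2/(4\sqrt{2\nu})$ as stated, I would be more careful: use the refined bound $\norme[\dot H^{1/2}]{u}^2 \le \frac{\varepsilon}{2}\norm{\rot u}_{L^2}^2 + \frac{1}{2\varepsilon}\norme[L^2]{u}^2$ is not tight enough; instead keep the sharp geometric-mean form $\norme[\dot H^{1/2}]{u}^4 \le \norme[L^2]{u}^2\norm{\rot u}_{L^2}^2$ and optimize by noting that $a^2 b^2$ with $a = \norme[L^2]{u(t)}$ nonincreasing lets one integrate by parts: $\int_0^\infty a(t)^2 b(t)^2\,dt \le -\frac{1}{4\nu}\int_0^\infty a(t)^2\,\frac{d}{dt}a(t)^2\,dt = \frac{1}{8\nu}a(0)^4$, using $\frac{d}{dt}\norme[L^2]{u}^2 = -2\nu\norm{\rot u}_{L^2}^2$ (for smooth solutions, $\le$ by approximation). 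This yields $\int_0^\infty \norme[\dot H^{1/2}]{u}^4\,dt \le \norme[L^2]{u_0}^4/(8\nu)$, hence the stated $L^4_t\dot H^{1/2}_x$ bound and, since $|\mathcal H(t)| \le \norme[\dot H^{1/2}]{u(t)}^2$, the helicity bound $\int_0^\infty |\mathcal H(t)|^2\,dt \le \norme[L^2]{u_0}^4/(8\nu)$.

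The main obstacle is purely a matter of rigor for Leray solutions rather than anything deep: the identity $\frac{d}{dt}\norme[L^2]{u}^2 = -2\nu\norm{\rot u}_{L^2}^2$ only holds as an inequality (the energy inequality), and $t\mapsto \norme[L^2]{u(t)}^2$ need not be differentiable, so the ``integration by parts'' step must be justified via the monotonicity of the energy and a Stieltjes/approximation argument (work first with the regularizing sequence $u_n$ satisfying the energy equality, apply the clean estimate, then pass to the limit using Fatou as in the derivation of~\eqref{energy_balance}). The two displayed quadratic identities in the statement (rewriting $|\mathcal H|^2$ and $\norme[L^4_t\dot H^{1/2}_x]{u}^2$ in terms of $\norm{\rot_\pm^{1/2}u}_{L^2}^2$) are immediate from~\eqref{helicity} and from $\norme[\dot H^{1/2}]{u}^2 = \norm{\rot_+^{1/2}u}_{L^2}^2 + \norm{\rot_-^{1/2}u}_{L^2}^2$, which was noted right before the proposition, so no work is needed there.
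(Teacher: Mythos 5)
Your overall strategy is the same as the paper's: both proofs reduce the two bounds to controlling $\int_0^\infty \norme[L^2]{u}^2\,\norme[L^2]{\omega}^2\,dt$ via the energy inequality. You differ in two mechanisms. For the pointwise bound you go through $|\mathcal{H}(t)|\le\norme[\dot H^{1/2}]{u}^2\le\norme[L^2]{u}\,\norme[L^2]{\rot u}$ (Cauchy--Schwarz in frequency), whereas the paper applies Cauchy--Schwarz directly to $\int_{\R^3}\omega\cdot u$ in physical space; both land on the same product, and your route has the advantage of treating $|\mathcal H|^2$ and $\norme[\dot H^{1/2}]{u}^4$ uniformly. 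To convert the product into a bound in terms of $\norme[L^2]{u_0}$, you integrate by parts in time using $\frac{d}{dt}\norme[L^2]{u}^2=-2\nu\norme[L^2]{\rot u}^2$, whereas the paper uses the static inequality $ab\le c^2/(4\beta)$ for $a+\beta b\le c$, applied to $a=\norme[L^\infty_tL^2_x]{u}^2$, $b=\norme[L^2_tL^2_x]{\omega}^2$, $\beta=2\nu$, $c=\norme[L^2]{u_0}^2$. Your dynamic argument is a legitimate alternative, and your caveat about justifying it for Leray solutions on the approximating sequence and passing to the limit is exactly the right concern.

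There is, however, an arithmetic slip in the key step, so the stated constants do not follow from what you wrote. From $\norme[L^2]{\rot u}^2=-\frac{1}{2\nu}\frac{d}{dt}\norme[L^2]{u}^2$ one gets
\[
\int_0^\infty \norme[L^2]{u}^2\,\norme[L^2]{\rot u}^2\,dt
=-\frac{1}{2\nu}\int_0^\infty \norme[L^2]{u}^2\,\frac{d}{dt}\norme[L^2]{u}^2\,dt
=-\frac{1}{4\nu}\Bigl[\norme[L^2]{u(t)}^4\Bigr]_0^\infty\le\frac{\norme[L^2]{u_0}^4}{4\nu},
\]
with prefactor $-\tfrac{1}{2\nu}$ rather than the $-\tfrac{1}{4\nu}$ you wrote; your method therefore delivers $\norme[L^2]{u_0}^4/(4\nu)$, not $/(8\nu)$, and this is sharp for the method (equality in the limit where all the energy is dissipated), so the factor of $2$ cannot be recovered by being more careful along the same lines. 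Similarly for the second bound: even granting $\int_0^\infty\norme[\dot H^{1/2}]{u}^4\,dt\le\norme[L^2]{u_0}^4/(8\nu)$, taking the square root gives $\norme[L^4_t \dot{H}^{1/2}_x]{u}^2\le\norme[L^2]{u_0}^2/(2\sqrt{2\nu})$, not $/(4\sqrt{2\nu})$. The structure of your proof is sound and the qualitative statements are proved; to reach the precise constants claimed in the proposition you would need the paper's $ab\le c^2/(4\beta)$ step (or some other refinement) rather than the time integration by parts as written.
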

\begin{proof}
The helicity is globally square-integrable in time  because
\[
\int_0^\infty |\mathcal{H}(t)|^2 dt \leq \int_0^\infty  \norme[L^2]{\omega(t)}^2 \norme[L^2]{u(t)}^2 dt
\leq \norme[L^2_tL^2_x]{\omega}^2 \norme[L^\infty_t L^2_x]{u}^2 
\leq \frac{\norme[L^2]{u_0}^4}{8\nu}\cdotp
\]
For the last step, we used the energy inequality~\eqref{energy_balance} and $ab\leq \frac{c^2}{4\beta}$
if $a,b,c,\beta > 0$ with $a+ \beta b \leq c$.
The full $L^4_t \dot{H}^{1/2}_x$ norm of $u$ is controlled by interpolation between $L^\infty_t L^2_x$ and $L^2_t\dot{H}^1_x$.
\cqfd\end{proof}

\subsection{Onsager's conjecture anew}
\label{par:onsager_anew}

In this section, we investigate briefly the minimal regularity that is required to ensure respectively the conservation
of energy or the balance of helicity.

\bigskip
Onsager's famous conjecture~\cite{O49} states that unless $u\in C^{\alpha}_x$ with $\alpha>1/3$, there may be an
energy miscount at spectral infinity and that $u$ itself is not an admissible test function.
The heuristic leading to that exponent is that the minimal regularity required to make sense of~\eqref{formal_energy}
consists in spreading one derivative across the three factors, hence the $C^{1/3}_x$ critical space. 
For the Euler equation, Constantin, Weinan \& Titi~\cite{CWT1994} indeed proved the conservation
of energy for $\alpha>1/3$ while Isett~\cite{I2018}, using convex integration,
recently showed its failure for $\alpha<1/3$ and solved the problem
that had been open for 69 years.

\smallskip
For the Navier-Stokes equation, the conservation of energy for Leray solutions was proved by Serrin under an
$L^q_tL^p_x$ assumption with $\frac{2}{q}+\frac{3}{p}=1$, $p\geq3$, which also implies smoothness (see criterion~\eqref{LPS} above).
Lions~\cite{Lions1960}, Ladyzhenskaya~\cite{Lady68} and Shinbrot~\cite{Shin74} also proved the conservation of energy when
\[
\frac{2}{q}+\frac{2}{p} \leq 1, \quad p\geq4
\]
so in particular for $L^4_tL^4_x$. This intermediary scaling ($\frac{2}{4}+\frac{3}{4}=\frac{5}{4}$) is of particular interest because it
is both too low to be a guaranteed bound for all Leray solutions, but also too high to automatically imply the smoothness of
the solution. Kukavica~\cite{Kuk2006}  weakened this assumption to a local $L^2_{t,x}$ bound on the pressure (recall that
$p$ is obtained by a Calder\'on-Zygmund operator applied to $u\times u$). The last gap in scaling was closed by Cheskidov, Friedlander \& Shvydkoy~\cite{CFS2010}, who proved that any Leray solution in $L^3([0,T];H^{5/6})$ conserves energy.
See also Leslie-Shvydkoy~\cite{LS2018b}.

To understand why the space $L^3_t\dot{H}^{5/6}_x$ is exactly
consistent with Onsager's heuristic, let us point out that, even with a loose Leibniz rule, one cannot expect to make sense of
\[
\int_{0}^T \int_{\R^3} \det(\rot u, u , u) = 0
\qquad\text{unless}\qquad
\int_{0}^T \int_{K} ||D|^{1/3} u(t,x)|^3 dxdt<\infty
\]
for any compact subset $K\subset \R^3$.
The Navier-Stokes (\ie parabolic) scaling of $L^3_t \dot{W}^{1/3,3}_x$ is $\frac{2}{3}+\frac{3}{3}-\frac{1}{3}=1+\frac{1}{3}$,
which matches that of $L^3_t\dot{H}^{5/6}_x \subset L^3_t \dot{W}^{1/3,3}_x$. The local integrability at this scale is ensured in the following way.
For a triple $s_1+s_2+s_3\geq 3/2$ with $s_j\geq0$ and at least two non-zero regularity indices and $K\subset \R^3$ bounded,
H\"older law and the Sobolev embeddings imply (see Constantin-Foias~\cite{CF88}):
\[
\int_{K} \left| (u\cdot \nabla)  v \cdot w \right| \leq
c_K \norme[L^{6/(3-2s_1)_+}]{u}  \norme[L^{6/(3-2s_2)_+}]{\nabla v} \norme[L^{6/(3-2s_3)_+}]{w}
\leq C_K \norme[H^{s_1}]{u}  \norme[H^{s_2}]{\nabla v} \norme[H^{s_3}]{w}.
\]
At Onsager's scaling, the difficulty is that, when $u\in H^{5/6}$, then $\nabla u\in H^{-1/6}$ may fail to be locally integrable.
Very elegantly,
Cheskidov, Friedlander \& Shvydkoy~\cite{CFS2010} used a frequency decomposition $u=u_{l}+u_h$
with an arbitrary spectral threshold $\kappa$ and controlled the non-trivial terms
with Bernstein's inequalities to transfer the singularity across the trilinear interaction, effectively loosening Leibniz's rule:
\begin{align*}
\int_{K} \left| (u\cdot \nabla)  u_l \cdot u \right| &\leq 
\int_{K} \left| (u_h\cdot \nabla)  u_l \cdot u_h \right| + \int_{K} \left| (u_l\cdot \nabla)  u_l \cdot u_h \right| +0
\\&\leq \norme[H^{1/2}]{u_h}^2 \norme[H^{3/2}]{u_l}  
+ \norme[H^{5/6}]{u_l} \norme[H^{1}]{u_l}  \norme[H^{2/3}]{u_h} 
\\&\lesssim \left(\kappa^{-1/3} \norme[H^{5/6}]{u_h}\right)^2 \left(\kappa^{2/3} \norme[H^{5/6}]{u_l}\right)
+ \norme[H^{5/6}]{u_l} \left(\kappa^{1/6} \norme[H^{5/6}]{u_l}\right) \left(\kappa^{-1/6} \norme[H^{5/6}]{u_h} \right)
\\&\lesssim \norme[H^{5/6}]{u}^3
\end{align*}
This computation ensures that the cancellation $\displaystyle \lim_{\kappa\to\infty}\int_{\R^3} (u\cdot \nabla)u_l \cdot u = 0$ is legitimate.

\smallskip
The other side of Onsager's conjecture for Navier-Stokes is still open. A historical breakthrough\footnote{This article
is the result of three years of reflection inspired by Vlad Vicol's remarkable talk at the CIRM of Marseille, in
December 2018, which brought the two authors together. We are grateful to Prof. Vicol for his kind advice
at that time and when we met again at the IHES in Gif-sur-Yvette in early 2020 \cite{Vicol2020}. Our meditation on the
Beltrami waves that were used in the original proof~\cite{MR3898708} ultimately led us to Definition~\ref{def:spin_definite} of \textit{spin-definite fields} and convinced us of the importance of this notion for hydrodynamics.}
was achieved very recently
by Buckmaster \& Vicol~\cite{MR3898708}, \cite{BV2019} and with Colombo~\cite{BCV2020}. They showed that a small positive
regularity $C^0_tH^\sigma_x$ with $\sigma\simeq 2^{-18}$ is not enough to prevent the existence of non-conservative viscous flows.
They constructed flows in that class whose energy profile can be prescribed arbitrarily.
Such strange flows are weak solutions of  the Navier-Stokes equation but are \textit{not} Leray solutions.
While this pathology may seem to be of a purely mathematical nature, it does have a deep connection with turbulence \cite{DLS2019}, \cite{BV2021}. These flows display a persistent low-frequency shadow of a vanishing high-frequency forcing,
which was first observed for Euler \cite{MR3374958}. This reverse cascade ends up to be stronger than what
the viscosity can diffuse. In the absence of viscosity~\cite{BMNV2021},
one can even push the regularity of the pathologies to $\sigma=1/2^-$.

\bigskip
In the same spirit as Onsager's original conjecture,
one can ask \textbf{which minimal regularity will ensure the balance of the helicity},
\ie the conservation of~$N_+-N_-$ defined above.
Roughly speaking, in order to use $\rot u$ as a test function and ensure~\eqref{formal_curlergy}, one would need
to spread two derivatives across three factors, which would place the bar at $C^{2/3}_x$.
This threshold is sometimes known as \textit{Onsager's conjecture for helicity}.
In the case of Euler's equation, Onsager's conjecture for helicity was essentially resolved  by Cheskidov, Constantin,
Friedlander \& Shvydkoy \cite{CCFS2008}.
Recently, Luigi de Rosa~\cite{LdR2019} investigated the possibility of splitting the assumption
between $u\in L^{q_1}_t C^{\alpha_1}_x$ and $\curl u\in L^{q_2}_t W^{\alpha_2,1}_x$ with $\frac{2}{q_1}+\frac{1}{q_2}=1$
and $2\alpha_1+\alpha_2\geq 1$, which suggests that, for helicity, subtle plays with scaling are possible.

\smallskip
Because of the higher regularity threshold, the estimate in the case of Navier-Stokes is simpler than the one presented above.
For example, having $u\in L^3_t(\dot{H}^{7/6}_x)$ provides enough integrability
\[
\int_0^t \int_{\R^3} \left| (u\cdot \nabla)  u \cdot \rot u \right| \leq 
\norme[L^3(L^9)]{u} \norme[L^3(L^{9/4})]{\nabla u} \norme[L^3(L^{9/4})]{\rot u}
\leq \norme[L^3(\dot{H}^{7/6})]{u} \norme[L^3(\dot{H}^{1/6})]{\nabla u}^2 \leq \norme[L^3(\dot{H}^{7/6})]{u}^3
\]
and thus legitimizes~\eqref{formal_curlergy}. 
Note that the scaling of $ L^3_t(\dot{H}^{7/6}_x)$ is consistent with $1/3$ more derivative than that of~$L^3_t(\dot{H}^{5/6}_x)$,
which was critical for the conservation of energy. This scaling is thus coherent, in spirit,
with Onsager's conjecture for helicity.
The scaling of $L^3(\dot{H}^{7/6})$ 
differs from that of $L^\infty(L^2)\cap L^2(\dot{H}^1)$ by $\frac{7}{6}-\frac{2}{3}=\frac{1}{2}$
derivative; such a control is similar in scaling to $L^\infty(\dot{H}^{1/2})\cap L^2(\dot{H}^{3/2})$ and is
therefore not known (and possibly not expected) for the most general Leray solutions.

\bigskip
\begin{rmk}
Formally, there are two other known conserved integrals for Euler and Navier-Stokes: the \textit{momentum}
\begin{equation}
P(t) = \int_{\R^3 \text{ or }\T^3} u(t,x) dx,
\end{equation}
and the \textit{angular momentum}
\begin{equation}
L(t)=\int_{\R^3\text{ or }\T^3} x\times u(t,x) dx.
\end{equation}
However, on $\R^3$, the decay of the velocity field that is necessary to define the momentum is not benign; for example, $P(t)$ is identically zero
for any integrable divergence-free field. Similarly, the weighted integrability
\[
u\in L^1((1+|x|)dx)
\]
happens to be the critical one that
cannot be propagated by the flow because the generic profile of a well localized flow decays exactly as $|x|^{-d-1}$ at infinity along most directions, which is due to the non-local effect of the pressure field (see Brandolese-Vigneron \cite{BV2007}).  Therefore $P$ and $L$ are not the most useful conservation laws
for flows on the full space $\R^3$.
\end{rmk}
%\vspace{1em}

%\pagebreak
\subsection{Non-explosion criteria}\label{par:non_explosion}

The Navier-Stokes system can be written for the decomposition $u=u_{+}+u_{-}$ where $u_{\pm}=\mathbb Q_{\pm}u$ are the two spin-definite components of $u$ (see Definition~\ref{def:spin_definite}):
\begin{align}\label{226}
\begin{cases}
 &\dis \frac{\p u_{+}}{\p t}+\mathbb Q_{+}\bigl(\rot u\times u\bigr)+\nu \rot_{+}^{2} u_{+}=0, \qquad u_{+}(0)=\mathbb{Q}_+u_0, 
\\[1em]
&\dis\frac{\p u_{-}}{\p t}+\mathbb Q_{-}\bigl(\rot u\times u\bigr)+\nu \rot_{-}^{2} u_{-}=0, \qquad u_{-}(0)=\mathbb{Q}_-u_0. 
\end{cases}
\end{align}
However, the coupling of the two equations through $\rot u \times u$ is highly intricate. The point of this section
is to investigate how this coupling relates to issues of regularity.

\medskip
Let us briefly explain the technical difficulty that one encounters when one attempts to generalize the conservation
law~\eqref{conservation_NpNm} to the framework of Leray solutions. 
Let $u$ be a Leray solution of Navier-Stokes with $u_0\in H^{1/2}$
and $u_k$ a sequence of Galerkine approximations of $u$ that are spin-definite.
It is common knowledge (see \eg\cite{LR2016}) that the convergence of~$u_k$ to $u$ holds in the strong topology of
$L^\infty([0,T];H^{-1}) \cap  L^2([0,T],H^s)$ for any $T>0$ and any arbitrary but fixed value $s<1$.
In particular, with $s=1/2$, one gets that
\[
\lim \poscal{\rot_\pm u_k(t)}{u_k(t)}_{L^{2}}
= \poscal{\rot_\pm u(t)}{u(t)}_{L^{2}}
\]
for almost every $t\geq0$. The proof of~\eqref{conservation_NpNm}
can be reproduced for the smooth functions~$u_k$ leading to:
\begin{align*}
\poscal{\rot_+ u_k(t)}{u_k(t)}_{L^{2}} &+2\nu\int_0^t \poscal{\rot_+ ^{3}u_k}{u_k}_{L^{2}}
+ \poscal{\rot_- u_k(0)}{u_k(0)}_{L^{2}}\\
&= \poscal{\rot_+ u_k(0)}{u_k(0)}_{L^{2}}
+\poscal{\rot_- u_k(t)}{u_k(t)}_{L^{2}}+2\nu\int_0^t \poscal{\rot_- ^{3}u_k}{u_k}_{L^{2}}
\end{align*}
\ie $N_+(u_k,t)+N_-^0 =N_+^0+ N_-(u_k,t)$.
However, in general, Fatou's lemma can only guarantee that
\[
N_\pm(u,t) \leq \underset{{k\to\infty}}{\lim\inf} \, N_\pm( u_k,t),
\]
which is not in our favor if we want to pass to the limit in the previous identity. 

\bigskip
It is possible to circumvent this difficulty in the case of spin-definite solutions.

\begin{thm}\label{lem21}
If $u$ is a Leray solution of Navier-Stokes stemming from $u_0\in H^{1/2}$,
then $u$ is smooth as long as it remains spin-definite.
\end{thm}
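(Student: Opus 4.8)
The plan is to show that if $u$ is spin-definite, say $u = u_+$ with negative component $u_- \equiv 0$, then the helicity-type conservation law forces a strong control on the critical norm, which in turn bootstraps to smoothness. First I would observe that when $u$ is spin-definite with positive spin, one has $\rot_- u = 0$, hence $N_-(u,t) = 0$ identically. Then the (formal) conservation law~\eqref{conservation_NpNm}, once made rigorous for Leray solutions, would read $N_+(u,t) = N_+(u,0) = N_+^0$, which pins down the full $L^\infty_t\dot H^{1/2}_x \cap L^2_t\dot H^{3/2}_x$ norm of $u$ in terms of the initial data. Control in $L^2_t\dot H^{3/2}_x$ together with $L^\infty_t\dot H^{1/2}_x$ is a well-known subcritical regularity criterion (it dominates, e.g., the Ladyzhenskaya--Prodi--Serrin quantity $L^4_t\dot H^{1/2+2\cdot 1/4}=L^4_t\dot H^1$, or $L^4_t L^\infty$-type bounds via Sobolev), so smoothness on the whole interval of existence follows from a standard bootstrap.

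The real work is making the conservation law rigorous for a Leray solution rather than for a smooth one, and the excerpt has just explained precisely why Fatou's lemma fails to give it for general Leray solutions. The key point that rescues the spin-definite case is a \emph{one-sided} argument: for $u=u_+$ we only need an \emph{upper} bound on $N_+(u,t)$, and Fatou does give $N_+(u,t)\le \liminf_k N_+(u_k,t)$ in the wrong direction — so instead I would argue directly. I would test the $u_+$-equation in~\eqref{226} against $2\rot_+ u_+ = 2|D|u_+$ (legitimate a priori only in a mollified/Galerkin scheme), using that for a spin-positive field the nonlinear coupling term $\mathbb{Q}_+(\rot u\times u)$ paired with $\rot_+ u$ gives $\det(\rot u, u, \rot u)=0$ exactly as in~\eqref{formal_curlergy}, so no nonlinear contribution survives. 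This yields $\frac{d}{dt}\poscal{\rot_+ u}{u}_{L^2} + 2\nu\poscal{\rot_+^3 u}{u}_{L^2}=0$, i.e.\ $N_+(u,t)=N_+^0$ on any interval where the solution stays spin-definite and smooth enough to justify the pairing.

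To convert this into an \emph{a priori} bound valid up to the (a priori possible) blow-up time, I would run it through a continuation/bootstrap argument: start from $u_0\in H^{1/2}$, invoke local well-posedness in $\dot H^{1/2}$ (Fujita--Kato) to get a smooth solution on $[0,T_0)$; on that interval the spin is preserved because the equations~\eqref{226} show $u_-$ solves a linear parabolic equation with zero source and zero data whenever $u$ is spin-positive, hence $u_- \equiv 0$; then the identity $N_+(u,t)=N_+^0$ gives a uniform $L^2_{t,\mathrm{loc}}\dot H^{3/2}_x$ bound; feed this into the subcritical regularity criterion to conclude $u$ cannot blow up at $T_0$, hence extends. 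I would then match this with the given Leray solution by weak-strong uniqueness: the constructed strong solution and the Leray solution $u$ coincide, so $u$ itself is smooth as long as it is spin-definite. The main obstacle is precisely the legitimacy of testing against $2\rot_+u$ at low regularity and the preservation of the spin constraint under the regularization — this must be handled at the level of the Galerkin/mollified scheme where the cancellation $\det(\rot u,u,\rot u)=0$ is exact, and one must check that spectral truncation to $\range\mathbb{Q}_+(\xi)$ is compatible with the Galerkin projection so that $u_-$ stays identically zero along the approximation; once that is in place the passage to the limit only needs the upper-semicontinuity that Fatou does provide, together with the fact that the limit of a constant is that constant.
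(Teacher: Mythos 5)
Your core argument is the paper's: for a spin-positive solution $N_-\equiv 0$, so the conservation law \eqref{conservation_NpNm} pins $N_+(u,t)=N_+(u,0)$, i.e.\ a uniform bound in $L^\infty_t\dot H^{1/2}_x\cap L^2_t\dot H^{3/2}_x$, and a critical regularity criterion then rules out blow-up. The paper runs exactly this through a continuation argument (take $T$ to be the supremum of times up to which $u$ is smooth, apply the conservation law on $[0,T)$ where every pairing is classical, and invoke \cite{GKP2016} to see that $T$ cannot be a singular time), while you close with Ladyzhenskaya--Prodi--Serrin via the interpolated bound $L^4_t\dot H^1_x\hookrightarrow L^4_tL^6_x$; both endgames are fine.

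Two caveats. First, your claim that the spin is automatically preserved --- ``$u_-$ solves a linear parabolic equation with zero source and zero data whenever $u$ is spin-positive'' --- is false: the source in the $u_-$ equation of \eqref{226} is $\mathbb Q_-\bigl(\rot u\times u\bigr)$, and $\rot u_+\times u_+$ has no reason to lie in $\mathbb Q_+L^2$, so this term does not vanish for a spin-positive field. The paper's remark immediately after the theorem makes exactly this point: spin-definiteness is in general \emph{not} propagated by the flow. Fortunately this step is not needed: spin-definiteness on $[0,T_1]$ is a \emph{hypothesis} of the theorem, and by weak-strong uniqueness the locally-in-time strong solution coincides with the given Leray solution there, hence inherits that hypothesis; you should simply use it rather than try to prove it. Second, the Galerkin-compatibility issues you flag as ``the main obstacle'' evaporate once you adopt the continuation structure: on the maximal interval of smoothness the pairing with $2\rot u$ and the cancellation \eqref{formal_curlergy} are classical, no passage to the limit in the conservation law is required, and the only limit fact you need is the one you already have (a constant stays a constant). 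With the erroneous spin-preservation step deleted and the hypothesis used in its place, your proof is correct and matches the paper's.
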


\begin{proof}
Let $u$ be a Leray solution of Navier-Stokes with $u_0\in H^{1/2}$ and $T_1>0$ such that $u$ is
spin-definite on $[0,T_1]$. Without impeding the generality, one can apply a planar symmetry if necessary and assume positive spin.
It is common knowledge that $u$ is smooth on some non-trivial interval $[0,T_2]$. One considers
\[
T=\sup\left\{ t\in[0,T_1]\,;\, u\text{ is smooth on }[0,t]\right\}\geq T_2.
\]
Reasoning by contradiction, let us assume that $T\leq T_1$. Then~\eqref{conservation_NpNm} and the fact that $u$ has
positive spin imply $N_{+}(u,t)=N_{+}(u,0)$ for all $t\in[0,T)$ \ie 
\[
\norm{u(t)}_{\dot{H}^{1/2}}^{2}
+2\nu\int_{0}^{t}\norm{u}_{\dot{H}^{3/2}}^{2} dt'
=
\norm{\rot _{+}^{1/2} u(t)}_{L^{2}}^{2}
+2\nu\int_{0}^{t}\norm{\rot _{+}^{3/2} u}_{L^{2}}^{2} dt'
=\norm{u_0}_{\dot{H}^{1/2}}^{2}.
\]
In particular, $u\in L^\infty([0,T);\dot{H}^{1/2})$ and~\cite{GKP2016} implies that $T$ cannot be a singular time;
consequently, one has~$T> T_1$.
\cqfd\end{proof}
\begin{rmk}
According to~\eqref{226}, a solution $u$ remains spin-definite if and only if $\mathbb{P}(\rot u\times u)$ has the same spin as $u$.
In general, it is not clear that this property is propagated by the flow. At least, this is the case for generalized Beltrami flows,
\ie when $\rot(\rot u\times u)=0$ because then $\mathbb{P}(\rot u\times u)=0$.
\end{rmk}

\medskip
The assumptions of the previous statement are somewhat exhorbitant.
In the rest of this section, we investigate instead how the respective sizes of the spin-definite components $\mathbb{Q}_\pm u$
of a smooth solution $u$ are related to the emergence of singularities.
However, as we only need smoothness to ensure the conservation of~$N_+(u,t)-N_-(u,t)$, we will preserve some
generality by assuming instead that $u$ is a Leray solution such that $|N_+(u,t)-N_-(u,t)|\leq C_0$.

\smallskip
The following lemma will be useful to bound a pair of close numbers from a common lower bound.
\begin{lemma}
For $\alpha, \beta\in\R_{+}$ and positive $C_{j},\varepsilon_{j}$, we have:
\[
C_{0}\ge \val{\alpha-\beta}\ge -C_1+ \varepsilon_{1}\min(\alpha^{\varepsilon_{2}}, \beta^{\varepsilon_{2}})
\qquad\Longrightarrow\qquad
\max(\alpha, \beta)\le C_{0} + \bigl( \varepsilon_{1}^{-1}(C_{0}+C_1)\bigr)^{1/\varepsilon_{2}}
\]
and
\[
C_{0}\ge \val{\alpha-\beta}\ge -C_1+ \varepsilon_{1}\min(\log\alpha, \log\beta)
\qquad\Longrightarrow\qquad
\max(\alpha, \beta)\le C_{0} + \exp\bigl( \varepsilon_{1}^{-1}(C_{0}+C_1)\bigr).
\]
\end{lemma}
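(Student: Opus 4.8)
The plan is to treat the two stated implications symmetrically as elementary real-variable estimates, using nothing beyond the monotonicity of $t\mapsto t^{\varepsilon_2}$ (respectively $t\mapsto \log t$) on $\R_+$ and the definition of the $\min$. First I would fix $\alpha,\beta\in\R_+$ and, without loss of generality, assume $\beta=\min(\alpha,\beta)$, so that $\max(\alpha,\beta)=\alpha$ and $\min(\alpha^{\varepsilon_2},\beta^{\varepsilon_2})=\beta^{\varepsilon_2}$ by monotonicity of the power function for the positive exponent $\varepsilon_2$. The hypothesis then reads $C_0\ge \alpha-\beta$ and $-C_1+\varepsilon_1\beta^{\varepsilon_2}\le C_0$, the latter giving directly $\beta^{\varepsilon_2}\le \varepsilon_1^{-1}(C_0+C_1)$, hence $\beta\le\bigl(\varepsilon_1^{-1}(C_0+C_1)\bigr)^{1/\varepsilon_2}$.

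Next I would simply add the two resulting inequalities: from $\alpha\le C_0+\beta$ and the bound on $\beta$ just obtained, one gets
\[
\max(\alpha,\beta)=\alpha\le C_0+\bigl(\varepsilon_1^{-1}(C_0+C_1)\bigr)^{1/\varepsilon_2},
\]
which is exactly the claimed conclusion. The symmetry of the roles of $\alpha$ and $\beta$ in the hypothesis (both $|\alpha-\beta|$ and $\min$ are symmetric) means the case $\alpha=\min(\alpha,\beta)$ is handled identically, so no genuine case analysis is needed beyond naming the smaller of the two.

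For the second implication the argument is verbatim the same with $t\mapsto\log t$ in place of $t\mapsto t^{\varepsilon_2}$: again assuming $\beta=\min(\alpha,\beta)$, monotonicity of $\log$ gives $\min(\log\alpha,\log\beta)=\log\beta$, the second hypothesis yields $\log\beta\le\varepsilon_1^{-1}(C_0+C_1)$, hence $\beta\le\exp\bigl(\varepsilon_1^{-1}(C_0+C_1)\bigr)$, and adding $\alpha\le C_0+\beta$ gives $\max(\alpha,\beta)\le C_0+\exp\bigl(\varepsilon_1^{-1}(C_0+C_1)\bigr)$. One should note in passing that the statement is only informative when $\alpha$ and $\beta$ are bounded away from $0$ (so that $\log$ is defined and the $\min$-term is not vacuously harmless), but this is implicit in the way the lemma will be applied, where $\alpha,\beta$ are large quantities $N_\pm$. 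There is essentially no obstacle here: the only thing to be careful about is invoking monotonicity with the correct sign of $\varepsilon_2>0$ so that $\min$ of the powers equals the power of the $\min$, and keeping track that the additive constant $C_0$ survives outside the root/exponential rather than inside it.
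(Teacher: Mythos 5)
Your proof is correct and follows essentially the same route as the paper's: reduce by symmetry to $\beta\le\alpha$, use the chain of hypotheses to get $\varepsilon_1\beta^{\varepsilon_2}\le C_0+C_1$ (the paper arrives at this by writing $C_0+\beta+C_1\ge\alpha+C_1\ge\beta+\varepsilon_1\beta^{\varepsilon_2}$, you by direct transitivity, which is the same inequality), then conclude via $\alpha\le\beta+C_0$. The logarithmic case is handled identically in both.
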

\begin{proof}
Since the assumption and the conclusion are symmetrical in $\alpha, \beta$, we may assume that $0\le \beta\le \alpha$.
We then have $C_{0}+\beta+C_1\ge \alpha+C_1\ge\beta+ \varepsilon_{1}\beta^{\varepsilon_{2}}$ 
so
\[
\varepsilon_{1}\beta^{\varepsilon_{2}}\le C_{0}+C_1
\qquad\ie\qquad
\beta\le\bigl( \varepsilon_{1}^{-1}(C_{0}+C_1)\bigr)^{1/\varepsilon_{2}}.
\]
Consequently,
$\max(\alpha, \beta)=\alpha\le \beta+C_{0}\le \bigl( \varepsilon_{1}^{-1}(C_{0}+C_1)\bigr)^{1/\varepsilon_{2}}+C_{0}$.
The second claim can be obtained in a similar way.
\cqfd\end{proof}

\medskip
At a time of first singularity, we have already mentioned (see the last result of~\S\ref{par:conservation})
that $N_+(u,t)$ and $N_-(u,t)$ will simultaneously diverge to $+\infty$ and at the same rate. 
As Leray's flow goes on, the value
of $N_+(u,t)-N_-(u,t)$ may be altered through each singular event.
If the conflicts of spins were resolved (possibly in a non-unique way) by favoring one over the other, this could lead to a
substantial drift.  The following result quantifies, in this general setting, that even a logarithmic in-balance between
the two  spins is enough to deter singularities.
\begin{thm}\label{thmMain3}
If $u$ is a Leray solution of Navier-Stokes such that
\begin{equation}\label{229}
\forall t\in [0,T),\qquad
C_0 \geq \val{N_{+}(u,t)-N_{-}(u,t)}\ge -C_{1} + \varepsilon\min\bigl(\log N_{+}(u,t), \log N_{-}(u,t)\bigr)
\end{equation}
for some constants $C_0, C_1,\varepsilon>0$. Then $u$ remains smooth on $[0,T^\ast)$ with $T^\ast > T$. 
\end{thm}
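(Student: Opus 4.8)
The plan is to read the statement as a short corollary of the preceding Lemma combined with the blow-up criterion of~\cite{GKP2016}, in the same spirit as the proof of Theorem~\ref{lem21}, the novelty being only that the assumption ``$u$ is spin-definite'' (i.e. one of the $N_{\pm}$ vanishes identically) is relaxed to the logarithmic imbalance~\eqref{229}. First I would apply the second implication of the preceding Lemma, pointwise in $t\in[0,T)$, with $\alpha=N_{+}(u,t)$, $\beta=N_{-}(u,t)$, $\varepsilon_{1}=\varepsilon$, and $C_{0},C_{1}$ as given. Note that the left-hand inequality in~\eqref{229} already forces both $N_{\pm}(u,t)$ to be finite, and the presence of $\log N_{\pm}$ forces them positive, so the Lemma applies as stated and yields
\[
\max\bigl(N_{+}(u,t),N_{-}(u,t)\bigr)\leq C_{0}+\exp\bigl(\varepsilon^{-1}(C_{0}+C_{1})\bigr)=:M,
\]
a bound that is uniform in $t\in[0,T)$.

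Next I would convert this into an a~priori control of the critical Sobolev norm of the velocity. By the diagonalization of the curl recalled in~\S\ref{par:curl_signed} (Lemma~\ref{lem13}), one has $\val D\,u=\rot_{+}u+\rot_{-}u$ for a divergence-free field, hence, by functional calculus,
\[
\norme[\dot H^{1/2}]{u(t)}^{2}=\poscal{\val D\,u(t)}{u(t)}=\norme[L^{2}]{\rot_{+}^{1/2}u(t)}^{2}+\norme[L^{2}]{\rot_{-}^{1/2}u(t)}^{2}\leq N_{+}(u,t)+N_{-}(u,t)\leq 2M
\]
for every $t\in[0,T)$, where the middle inequality is just the definition~\eqref{222} restricted to its $L^{\infty}_{t}$ part. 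Therefore $u\in L^{\infty}\bigl([0,T);\dot H^{1/2}\bigr)$; in particular $u_{0}\in L^{2}\cap\dot H^{1/2}=H^{1/2}$.

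Finally I would conclude by regularity propagation exactly as in Theorem~\ref{lem21}. Since $u_{0}\in\dot H^{1/2}$, local well-posedness in $\dot H^{1/2}$ together with weak--strong uniqueness shows that $u$ agrees with a smooth solution on a maximal subinterval $[0,\tau)\subseteq[0,T)$; were $\tau<T$, the uniform bound $\sup_{[0,\tau)}\norme[\dot H^{1/2}]{u}\leq\sqrt{2M}<\infty$ would contradict~\cite{GKP2016}, since a first singular time forces the scale-invariant norm $\dot H^{1/2}$ to diverge in $\limsup$. Hence $u$ is smooth on all of $[0,T)$, and applying~\cite{GKP2016} once more with the bound valid up to time $T$ shows that $T$ itself is not singular, so the smooth solution extends past $T$ and $T^{\ast}>T$. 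The manipulations via the Lemma and the identities of~\S\ref{par:curl_signed} are routine; the real content, and the step I expect to be the genuine obstacle, is the appeal to~\cite{GKP2016} that upgrades a uniform $\dot H^{1/2}$ bound to actual smoothness --- this is precisely the analytic difficulty that the elementary algebra of $\rot_{\pm}$ cannot bypass.
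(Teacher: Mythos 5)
Your proposal is correct and follows essentially the same route as the paper: apply the second implication of the preceding lemma to get a uniform bound on $\max\bigl(N_{+}(u,t),N_{-}(u,t)\bigr)$, deduce a uniform $L^{\infty}\bigl([0,T);\dot H^{1/2}\bigr)$ control of $u$ via the diagonalization of the curl, and invoke~\cite{GKP2016} together with local well-posedness to exclude a singularity at or before $T$. (You even keep the additive $C_{0}$ from the lemma's conclusion, which the paper's own statement of the bound silently drops.)
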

\begin{proof}
The previous lemma implies 
$N_\pm(u,t)\le  \exp\bigl[\varepsilon_{1}^{-1}\left(C_0+C_1\right)\bigr]$ on $[0,T)$
and in particular
\[
\norme[{L^\infty([0,T];\dot{H}^{1/2})}]{u}^2 \leq \sup_{t\in[0,T]} N_+(u,t)+N_-(u,t)
\leq 2\exp\bigl[\varepsilon_{1}^{-1}\left(C_0+C_1\right)\bigr]
\]
thus $u(T)$ is smooth thanks to~\cite{GKP2016} and the solution can be extended slightly beyond that point by the standard
local well-posedness argument.
\cqfd\end{proof}

\subsection{Comparison with the dimension $n=2$}\label{par:2D}
Let us conclude this section by a brief investigation of the case of dimension 2.
The general expression of a divergence-free real-valued 2D vector field is
\begin{equation}\label{uDontSpin}
\vv{u}(x)=\frac{1}{2\pi} \int_{\R^2} \vartheta(\xi) \vv{\delta(\xi)} e^{i x\cdot \xi} d\xi
\end{equation}
where $\vv{\delta(\xi)}=\xi^\perp/|\xi|\in\R^2$ and $\vartheta(\xi)\in\C$ satisfies $\vartheta(\xi)=-\overline{\vartheta(\xi)}$; note the
anti-Hermitian symmetry because of the anti-symmetric nature of $\vv{\delta(\xi)}$ in 2D. Exceptionally, we write the arrows as a
visual cue to distinguish between vector and scalar quantities.
It is obvious that
\begin{equation}
|D|\vv{u} = \frac{1}{2\pi}\int_{\R^2} |\xi|\vartheta(\xi) \vv{\delta(\xi)} e^{i x\cdot \xi} d\xi,
\qquad
\omega = \curl \vv{u} = \frac{1}{2\pi}\int_{\R^2} i|\xi|\vartheta(\xi) e^{i x\cdot \xi} d\xi.
\end{equation}
Even though $|D|\vv{u}\in\R^2$ is non-local while $\omega \in\R$ and is local,
on the spectral side, the two operators are conjugate of one another:
\begin{equation}
\omega = i\vv{\delta(D)}\cdot |D|\vv{u} \qquad\text{and}\qquad |D|\vv{u}  = -i \vv{\delta(D)}( \omega ).
\end{equation}
This property means that, in 2D, the structure of the $\curl$ is not as rich as its 3D analog (compare with Remark~\ref{rem13})
and that, consequently, the conflict of two 2D contra-rotating vortices is not as profound as a conflict of spins in~3D.

\smallskip
In 2D, the resolution of such a conflict can only lead to a plain redistribution of the amplitude  $\vartheta(\xi)$ in~\eqref{uDontSpin}
and as  the geometry of the equation does not leave any room
for microlocal compensations, the flow either ``has to'' make a choice in favor of one direction of rotation
or, in the case of perfect balance, let the viscosity eat up the singularity attempt.
As we know, the Navier-Stokes equation is well-posed in 2D and the qualitative
behavior of the vorticity \cite{GW2006} matches this heuristic.

In 3D, a redistribution among the pair of amplitudes $(\vartheta_+(\xi),\vartheta_-(\xi))$ in~\eqref{uSpin}
also means favoring one spin over the other. However, the richer  geometry provides the flow with a new way
of ``not chosing'': it can amplify both spins simultaneously instead of letting the viscosity take over, which results in an escalating  conflict
of spin.
Singularities, if they occur, are thus \textbf{the byproduct of this unresolved microlocal game of chicken}.

\smallskip
Of course, in the physical realm, the presence of sticky boundaries (\ie with Dirichlet conditions) can produce numerous
cases of spin imbalance and couplings, which gives the boundary layer its driving role in turbulence,
regardless of whether or not true singularities or only quasi-singularities occur.
One also has to wonder whether the late resolution of physically admissible extreme events of this type
(\ie conflict of spins that have escalated for a long time) favors \textbf{subsequent cancellations},
which could be the mechanism that drives intermittency.

\begin{rmk}
We encourage the reader to consider the recent numerical simulations of Alexakis~\cite{A2017}.
Our colleagues in physics
study the energy and helicity fluxes of turbulent flows, by decomposing the influence of all possible interactions
among spin-definite components. The \textbf{numerical evidence} hints at multiple non-trivial facts:
the total energy flux can be split into three spin-related fluxes that  remain independently constant in the
inertial range; one of them amounts to 10\% of the total energy flux and is a (hidden) \textit{backwards} energy cascade,
which subsists even in fully developed 3D turbulence.
The helicity flux can be decomposed in a similar fashion into two fluxes that
remain constant in the inertial range.
\end{rmk}

\section{Critical determinants and non-local aspects of the regularity theory}\label{par:crit_det}

In this section, we investigate the idea of computing energy estimates for $\rot^\theta u$ with various values of $\theta>0$.
Each computation leads to a determinant whose average sign plays a key role both in the growth of the regularity norms
in the case of a potential blow-up and in their control as long as the flow remains smooth.
It is worth insisting on the fact that \textbf{geometric and non-local estimates seem to play a central role in the question of the regularity}
of the solutions of 3D Navier-Stokes.
This study also leads to a geometric criterion for the uniqueness of Leray solutions
and a slight variant of the Beale-Kato-Majda criterion.

\subsection{Example: a geometric drive for enstrophy}

Let us first investigate the well-known case of the enstrophy
\begin{equation}
\mathcal{E}(t)=\int_{\R^3} |\nabla u|^2 = \int_{\R^3} |\omega|^2.
\end{equation}
The equivalence between the two formulations follows \eg from $\rot^2 = -\Delta$ for divergence-free fields.

\medskip
Assuming regularity, one uses $\omega$ as a test function in the vorticity equation~\eqref{eq_vorticity}
and takes advantage of the cross-product structure of the nonlinearity, \ie
$(u\cdot \nabla) \omega - (\omega\cdot\nabla) u = \rot(\omega \times u)$.
One is led to the following balance:
\begin{equation}\label{enstrophyBalance}
\norm{\omega(t)}_{L^{2}}^{2}
 +2\nu\int_0^t \norm{\rot \omega(t')}_{L^{2}}^{2} dt'
 +2\int_0^t \int_{\R^{3}}\det\bigl(u,\rot u,  \Delta u\bigr) dx dt' =
 \norm{\omega_0}_{L^{2}}^{2}.
\end{equation}
Note that 
$\poscal{\omega\times u}{\rot \omega}_{L^{2}}=\int_{\R^{3}}\det\bigl(u,\rot u,  \Delta u\bigr) dx$. This computation
is a typical example involving a \textit{critical determinant}: the average sign of the determinant is responsible for the variations
of the norms measuring the regularity of the flow, here in terms of enstrophy.
When $\nu=0$, \ie for (smooth) 3D Euler flows, the space-time average of $\det\bigl(u,\rot u,  \Delta u\bigr)$ is
the sole geometrical drive of the variations of enstrophy.

The best known a priori upper bounds for enstrophy is a Riccati-type control by Lu \& Doering~\cite{LD2008}:
\begin{equation}
\mathcal{E}'(t) \leq C \mathcal{E}^3(t)
\end{equation}
It is obtained by estimating the critical determinant mentioned above and diverges in finite time.
For advanced numerical experiments on the growth of enstrophy for 3D viscous flows,
see \eg\cite{AP2017}, \cite{KYP2020}, \cite{KP2021} and the numerous references to the numerical literature therein.

An immediate corollary of~\eqref{enstrophyBalance} is a geometric criterion for regularity:
\begin{equation}\label{regC1}
\int_0^T\int_{\R^{3}}\det\bigl(u,\rot u,  \Delta u\bigr) dx dt \geq 0 \qquad\Longrightarrow\qquad
u \in L^\infty([0,T];\dot{H}^1) \cap L^2([0,T];\dot{H}^2).
\end{equation}
For example, one recovers in this manner that all irrotational flows are smooth because the critical determinant
vanishes identically (they are indeed the gradients of solutions of the heat equation).

%%%\medskip
%%%Applying this criterion requires some care because it is unlikely that the cubic property of $u$ would be inherited exactly by
%%%some sequence of approximate solutions. Instead, if $\int_{\R^{3}}\det\bigl(u,\rot u,  \Delta u\bigr) dx \geq0$ for a.e. $t\in[0,T]$, one proceeds as in proof of Theorem~\ref{lem21} by showing that the first time of singularity cannot occur before $T$.
%%%Taking advantage of a sign compensation in time is slightly trickier.
%%%A singularity of a Leray solution~$u$ during the interval $[0,T]$ could indeed render the derivation of~\eqref{enstrophyBalance} void.
%%%However, if the assumption holds along a sequence of smooth approximate solutions, then the sequence
%%%enjoys a uniform bound and $u$ will be regular.

\medskip
As a slightly more involved application, let us investigate the case of 3D fields with 2D symmetry, \ie
\[
v=\begin{pmatrix}v_{1}(x_{1}, x_{2})\\v_{2}(x_{1},x_{2})\\0\end{pmatrix}
\qquad\text{and}\qquad
\omega=\begin{pmatrix}0\\0\\\p_{1}v_{2}-\p_{2}v_{1}\end{pmatrix}.
\]
For such a field, one has
\[
\det(v, \rot v, \Delta v)=-\left\vert\begin{matrix}0&v_{1}&\Delta v_{1}\\0&v_{2}&\Delta v_{2}\\\p_{1}v_{2}-\p_{2}v_{1}&0&0\end{matrix}
\right\vert=(\p_{2}v_{1}-\p_{1}v_{2})\bigl(v_{1}\Delta v_{2}-v_{2}\Delta v_{1}\bigr).
\]
If one introduces the stream function $\psi(x_1,x_2)$ such that $v_{1}=\p_{2}\psi$ and $v_{2}=-\p_{1}\psi$:
\[
\det(v, \rot v, \Delta v) = \Bigl(-(\p_{2}\psi)(\p_{1}\Delta \psi)
+(\p_{1}\psi)(\p_{2}\Delta \psi)\Bigr)\Delta \psi,
\]
which has no particular reason to vanish but leads to a \textbf{global cancellation} for any $t>0$:
\begin{align*}
\int_{\R^3} \det(v, \rot v, \Delta v) &= 
\frac{1}{2}\left( \poscal{\p_{2}\psi}{-\p_{1}(\Delta \psi)^{2}}_{L^2}
+\poscal{\p_{1}\psi}{\p_{2}(\Delta \psi)^{2}}_{L^2} \right)
\\
&=\frac{1}{2}\poscal{\p_{1}\p_{2}\psi - \p_{2}\p_{1}\psi}{(\Delta \psi)^{2}}_{L^2} =0.
\end{align*}
In particular,~\eqref{regC1} implies the global regularity of such solutions, which has been known since Leray~\cite{MR1555394}.
As the balance law~\eqref{enstrophyBalance} also holds for smooth solutions of the Euler equation, the previous
computation implies the conservation of enstrophy for smooth 2D Euler flows.

\begin{rmk}
For a general 3D divergence-free flow $u$, invoking the vector potential $u=\rot \Psi$ and computing the critical
determinant in~\eqref{enstrophyBalance}
brings out \textit{288 terms} involving the product of a first, second and third order derivative of the components of $\Psi$,
with no obvious compensations through space averages. This remark illustrates the huge gap in complexity
between 2D and 3D flows.
\end{rmk}

\subsection{General case}
Let us go back to the Navier-Stokes equation written in the form~\eqref{nsi++}. The weak form
of the nonlinear term is, as mentioned in~\eqref{NL_weak_det}, a determinant:
\begin{equation}
\poscal{\partial_t u}{w}_{L^2} + \nu \poscal{\rot^2 u}{w}_{L^2} + \int_{\R^{3}}\det(\rot u,u,w) dx = 0
\end{equation}
for all divergence-free test fields $w$.
Assuming $u$ is smooth, one can collect various balance laws for Navier-Stokes by choosing $w$ appropriately as a function of $u$.
The two standard choices are either $w=u$, which gives Leray's energy equality for smooth solutions, and $w=\rot u$, which
we explored in~\S\ref{par:conservation} and which relates to the balance of helicity.
Taking $w=\rot^2 u$ leads to~\eqref{enstrophyBalance} and the balance of enstrophy with, this time, a non-trivial  critical determinant.

\bigskip
Leray's energy identity can be extended given a first integral of the flow, \ie $\alpha$ such that $(u\cdot \nabla) \alpha=0$.
Then $w=\alpha u$ is a divergence-free field and we have 
\[
\poscal{\p_{t}u}{\alpha u}_{L^2}+\nu \poscal{\rot^{2}u}{\alpha u}_{L^2}=0,
\]
and thus
\begin{equation}
\poscal{\alpha u}{u}_{L^2}-\int_{0}^{t}\poscal{\dot \alpha u}{u}_{L^2} +2\nu\int_{0}^{t}
\poscal{\rot^{2}v}{\alpha u}_{L^2}=\poscal{\alpha(0) u_0}{u_0}_{L^2}.
\end{equation}
For example, with $\alpha(t)= e^{-2 \lambda t}$, we get a family of conservation laws indexed by $\lambda>0$:
\begin{equation}
e^{-2\lambda t}\norm{u(t)}_{L^2}^{2}+2\lambda \int_{0}^{t}e^{-2\lambda t'}\norm{u(t')}_{L^2}^{2}dt'+
2\nu\int_{0}^{t}e^{-2\lambda t'}\norm{\nabla u (t')}_{L^2}^{2}dt'=\norm{u_0}_{L^2}^{2},
\end{equation}
which is a weighted time-integral (gauge transform) of the classical energy balance that puts $t' \sim 1/(2\lambda)$ into focus.
Similarly, for $w=e^{-2 \lambda t} \rot u$, one gets a variant of~\eqref{conservation_NpNm}:
\begin{equation}
e^{-2 \lambda t}\poscal{u}{\rot u}_{L^2}
+2\lambda \int_0^t e^{-2 \lambda t'} \poscal{u}{\rot u}_{L^2}
+ 2\nu \int_0^t e^{-2 \lambda t'} \poscal{\rot^2 u}{\rot u}_{L^2} = \poscal{u_0}{\rot u_0}_{L^2}.
\end{equation}
Note that $\poscal{u}{\rot u}_{L^2} = \norm{\rot_+^{1/2} u}^2_{L^2}-\norm{\rot_-^{1/2} u}^2_{L^2}$
and $\poscal{\rot^2 u}{\rot u}_{L^2} = \norm{\rot_+^{3/2} u}^2_{L^2}-\norm{\rot_-^{3/2} u}^2_{L^2}$.

\bigskip
Let us now investigate the more interesting case where $w=\rot_\pm u$.
\begin{prop}\label{detC12}
If $u$ is a smooth solution of Navier-Stokes, one has the following balance laws:
\begin{equation}
 \norm{\rot_{\pm}^{1/2} u(t)}^{2}+2\nu\int_{0}^{t}\norm{\rot_{\pm}^{3/2} u}^{2}dt'
 +\int_{0}^{t} \int_{\R^{3}}  \det(\rot u, u, |D|u)dx dt'
 =\norm{\rot_{\pm}^{1/2}u_0}^{2}.
\end{equation}
Note that the critical determinant is identical in both cases, which is a new proof of~\eqref{conservation_NpNm}.
One has also:
\begin{equation}
 \norm{u(t)}_{\dot{H}^{1/2}}^{2}+2\nu\int_{0}^{t}\norm{u(t')}_{\dot{H}^{1/2}}^{2} dt'
 +\int_{0}^{t} \int_{\R^{3}}  \det(\rot u, u, |D|u)dx dt'
 =\norm{u_0}^{2}_{\dot{H}^{1/2}}.
 \end{equation}
\end{prop}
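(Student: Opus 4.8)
The plan is to obtain both identities by the same energy-type argument that produced~\eqref{energy_balance} and~\eqref{conservation_NpNm}: test the Navier--Stokes equation written in $\curl$ form~\eqref{nsi++} against a well-chosen divergence-free multiplier and integrate in time, this time retaining the non-linear contribution. For the first family of balances the multiplier is $w=2\rot_\pm u$; for the $\dot H^{1/2}$ statement it is $w=2|D|u$, equivalently one adds the $+$ and $-$ identities.

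First I would record the algebraic identity that does all the work. Since $u(t,\cdot)$ is divergence-free, $\mathbb{P}u=u$, and Lemmas~\ref{defQ}--\ref{lem13} give $\rot_\pm u=|D|\mathbb{Q}_\pm u=\tfrac12(|D|\mathbb{P}u\pm\rot u)=\tfrac12(|D|u\pm\rot u)$. In particular $w=|D|u\pm\rot u$ lies in $\mathbb{Q}_\pm L^2\subset\mathbb{P}L^2$, so $\mathbb{P}$ may be moved off the non-linearity when pairing with $w$, using self-adjointness of $\mathbb{P}$.

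Then I would treat~\eqref{nsi++} term by term after pairing with $w=2\rot_\pm u$. The time term gives $2\poscal{\partial_t u}{\rot_\pm u}_{L^2}=\frac{d}{dt}\poscal{\rot_\pm u}{u}_{L^2}=\frac{d}{dt}\norm{\rot_\pm^{1/2}u}_{L^2}^2$, using that $\rot_\pm$ is self-adjoint, time-independent and $\geq0$. The viscous term gives $2\nu\poscal{\rot^2 u}{\rot_\pm u}_{L^2}=2\nu\poscal{\rot_\pm^2 u}{\rot_\pm u}_{L^2}=2\nu\norm{\rot_\pm^{3/2}u}_{L^2}^2$, because $\rot^2=\rot_+^2+\rot_-^2$ and $\rot_+\rot_-=\rot_-\rot_+=0$ (Lemma~\ref{lem13}). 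The non-linear term, using~\eqref{NL_weak_det} and the decomposition of $w$, gives $2\poscal{\rot u\times u}{\rot_\pm u}_{L^2}=\int_{\R^3}\det(\rot u,u,|D|u)\,dx\pm\int_{\R^3}\det(\rot u,u,\rot u)\,dx$, and the second determinant vanishes since two of its columns coincide. Integrating over $[0,t]$ yields the first family of balances; since the surviving determinant is independent of the sign, subtracting the $+$ and $-$ versions re-derives~\eqref{conservation_NpNm}. For the $\dot H^{1/2}$ statement the same computation with $w=2|D|u=2(\rot_+ u+\rot_- u)$ works, using the diagonalization of~\S\ref{par:curl_signed}: $\norm{\rot_+^{1/2}u}_{L^2}^2+\norm{\rot_-^{1/2}u}_{L^2}^2=\poscal{|D|\mathbb{P}u}{u}_{L^2}=\norm{u}_{\dot H^{1/2}}^2$ and likewise $\norm{\rot_+^{3/2}u}_{L^2}^2+\norm{\rot_-^{3/2}u}_{L^2}^2=\norm{u}_{\dot H^{3/2}}^2$ for divergence-free $u$.

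I do not expect a genuine obstacle: this is a bookkeeping computation rather than a result with a hard core. The only points that need care are keeping the numerical factors straight (the factor $2$ in the multiplier is what turns $\tfrac12\frac{d}{dt}$ into $\frac{d}{dt}$); checking that every manipulation is licit — self-adjointness of $\rot$ and $|D|$, moving $\mathbb{P}$ across the pairing, the integrations by parts hidden inside each pairing, and the time integration itself — which is exactly where the smoothness hypothesis on $u$ (and enough decay at infinity) is used; and recognising, as the key structural point, that the critical determinant $\int_{\R^3}\det(\rot u,u,|D|u)\,dx$ is the same for both spins, so that the helicity-type conservation~\eqref{conservation_NpNm} says precisely that this single determinant cancels out of the difference $N_+-N_-$.
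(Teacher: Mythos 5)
Your proposal is correct and follows essentially the same route as the paper: the entire content is the observation that $\det(\rot u,u,\rot_{+}u)=\det(\rot u,u,\rot_{-}u)$ because the two multipliers differ by $\rot u$, a repeated column, so that $2\det(\rot u,u,\rot_{\pm}u)=\det(\rot u,u,|D|u)$ — which is exactly your step $2\rot_{\pm}u=|D|u\pm\rot u$ followed by dropping $\det(\rot u,u,\rot u)=0$. As a minor bonus, your computation for the summed ($\dot H^{1/2}$) identity yields the coefficients consistent with Proposition~\ref{detD2th} at $\theta=1/2$, namely $\dot H^{3/2}$ in the viscous term and a factor $2$ on the determinant, indicating that the second display of the Proposition as printed contains typographical slips.
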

\begin{proof}
The only non-trivial point is the critical determinant. One has:
\[
\det(\rot u, u, \rot_{+}u)dx
= \det(\rot u, u, \rot u+\rot_{-}u)  =  \det(\rot u, u, \rot_{-}u)
\]
and thus
\[ 
\det(\rot u, u, |\rot| u)dx = 
\det(\rot u, u, \rot_+ u) + \det(\rot u, u, \rot_- u)
=2\det(\rot u, u, \rot_\pm u).
\]
Finally, since $\val \rot=\val D\mathbb P$, we can replace $|\rot| u$  by $|D|u$. Subtracting the two identities gives~\eqref{conservation_NpNm}, while adding them up provides the last claim.
\cqfd\end{proof}
\begin{rmk}
Thanks to Lemma~\ref{lem13}, one can rewrite this critical determinant as:
\begin{equation}
\det(\rot u, u, |D|u) = \det((\rot_+ - \rot_-) u, u, (\rot_+ + \rot_-) u) = 
-2\det(u, \rot_+ u, \rot_- u).
\end{equation}
This determinant is the geometrical drive for the growth of the $\dot{H}^{1/2}$ norm.
Among possible cancellations, it vanishes for Beltrami waves ($\rot u$ proportional to $u$), for flows spectrally supported on a sphere ($|D|u$ proportional to $u$) and, most importantly, for spin-definite flows ($\rot u$ proportional to $|D|u$).
\end{rmk}

\bigskip
To handle fractional powers, it is simplest to split the spin-definite components to avoid problems with the
lack of self-adjointness. Using~$w=\rot_\pm^{2\theta} u$ for some~$\theta>0$ and the properties established
in~\S\ref{par:curl_signed}, one gets:
\begin{equation}\label{eq:rot_theta}
\norme[L^2]{\rot_\pm^{\theta} u(t)}^2
+ 2\nu \int_0^t \norme[L^2]{\rot_\pm^{\theta+1} u(t')}^2 dt'
+ 2\int_0^t\int_{\R^{3}}\det(\rot u,u,\rot_\pm^{2\theta} u) dx dt' = 
\norme[L^2]{\rot_\pm^{\theta} u_0}^2.
\end{equation}
This time, the cancellation takes the form:
\[
\det(\rot u,u,\rot_+^{2\theta} u) + \det(\rot u,u,\rot_-^{2\theta} u) = \det(\rot u,u,|D|^{2\theta} u).
\]
For integer values of $2\theta$, one has:
\[
\det(\rot u,u,\rot_+^{2\theta} u) - \det(\rot u,u,\rot_-^{2\theta} u) = \det(\rot u,u,\rot^{2\theta} u).
\]
The determinants $\det(\rot u,u,\rot_\pm^{2\theta} u)$ are the geometric drive for the growth of the $\dot{H}^{\theta}$ norm of the spin-definite components of $u$. In particular, we have proven the following statement.
\begin{prop}\label{detD2th}
If $u$ is a smooth solution of Navier-Stokes, one has the following balance laws:
\begin{equation}
\norme[{\dot{H}^{\theta}}]{u(t)}^2
+ 2\nu \int_0^t \norme[{\dot{H}^{\theta+1}}]{u(t')}^2 dt'
+ 2\int_0^t\int_{\R^{3}}\det(\rot u,u,|D|^{2\theta} u) dx dt' = 
\norme[{\dot{H}^{\theta}}]{u_0}^2
\end{equation}
for any $\theta>0$, and the spin-definite variants~\eqref{eq:rot_theta};
when $\theta\in\mathbb{N}$, one can replace $|D|^{2\theta}$ by~$(-\Delta)^\theta$.
For any $n\in\mathbb{N}^\ast$, one has also
\begin{equation}
N^n_+(u,t)-N^n_-(u,t)
+ 2\int_0^t\int_{\R^{3}}\det(\rot u,u,\rot^n u) dx dt' = 
\norme[{\dot{H}^{n/2}}]{u_0^+}^2-\norme[{\dot{H}^{n/2}}]{u_0^-}^2
\end{equation}
where the definition~\eqref{222} is extended by
\begin{equation}
N^{n}_\pm(u,t) =
\norme[{\dot{H}^{n/2}}]{u^\pm (t)}^2
+ 2\nu \int_0^t \norme[{\dot{H}^{n/2+1}}]{u^\pm (t')}^2 dt' 
\end{equation}
and $u_0^\pm = \mathbb{Q}_\pm u_0$.
\end{prop}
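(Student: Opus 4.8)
All three families of identities spring from one computation — testing equation~\eqref{nsi++} against $w=\rot_\pm^{2\theta}u$ — followed by the elementary linear algebra of the two spin-definite blocks, so the plan is to isolate that computation first and then assemble. Fix $\theta>0$ and a smooth solution $u$ of~\eqref{nsi++}. By Remark~\ref{rem13} and functional calculus, $\rot_\pm^{2\theta}=|D|^{2\theta}\mathbb Q_\pm$ is a self-adjoint, nonnegative Fourier multiplier commuting with every $|D|^{s}$, and it satisfies $(\rot_\pm^{\theta})^{2}=\rot_\pm^{2\theta}$ as well as $\rot_+^{2\theta}\rot_-^{2\theta}=0$. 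Pairing~\eqref{nsi++} with $2\rot_\pm^{2\theta}u$ produces three terms: the time term is $\frac{d}{dt}\norme[L^2]{\rot_\pm^{\theta}u}^{2}$ by self-adjointness; the viscous term is $2\nu\norme[L^2]{\rot_\pm^{\theta+1}u}^{2}$, because $\rot^{2}=\rot_+^{2}+\rot_-^{2}$ by Lemma~\ref{lem13} and the cross contribution $\Poscal{\rot_\mp^{2}u}{\rot_\pm^{2\theta}u}$ vanishes by the orthogonality $\rot_+\rot_-=\rot_-\rot_+=0$; and the nonlinear term is $2\int_{\R^{3}}\det(\rot u,u,\rot_\pm^{2\theta}u)\,dx$, since $\rot_\pm^{2\theta}u\in\mathbb P L^{2}$ makes the Leray projection in~\eqref{nsi++} transparent against it and identity~\eqref{NL_weak_det} applies to the divergence-free pair $\bigl(u,\rot_\pm^{2\theta}u\bigr)$. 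Integrating over $[0,t]$ gives exactly~\eqref{eq:rot_theta}, which is also the spin-definite variant asserted in the statement.

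The first displayed identity is obtained by adding the ``$+$'' and ``$-$'' versions of~\eqref{eq:rot_theta}. For the quadratic parts, Plancherel together with $\mathbb P u=u$ gives $\norme[L^2]{\rot_+^{\theta}u}^{2}+\norme[L^2]{\rot_-^{\theta}u}^{2}=\Poscal{|D|^{2\theta}(\mathbb Q_++\mathbb Q_-)u}{u}=\norme[{\dot{H}^{\theta}}]{u}^{2}$, and the same with $\theta$ replaced by $\theta+1$. For the critical determinants, $(\rot_+^{2\theta}+\rot_-^{2\theta})u=|D|^{2\theta}\mathbb P u=|D|^{2\theta}u$, so the two integrands merge into the single one $\det(\rot u,u,|D|^{2\theta}u)$. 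When $\theta\in\N$ the multiplier $|D|^{2\theta}$ is literally $(-\Delta)^{\theta}$ acting componentwise, which is the last sentence of that part; and at $\theta=1$ one recovers the enstrophy balance~\eqref{enstrophyBalance}.

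For the helicity-type family one specializes $\theta=n/2$ with $n\in\N^{\ast}$ and now \emph{subtracts} the ``$-$'' version of~\eqref{eq:rot_theta} from the ``$+$'' one. The quadratic parts reassemble, through the definition~\eqref{222} read at exponent $n/2$ (note $\norme[L^2]{\rot_\pm^{n/2}u}^{2}=\norme[{\dot{H}^{n/2}}]{\mathbb Q_\pm u}^{2}$), into $N^{n}_{+}(u,t)-N^{n}_{-}(u,t)$ on the left and into $\norme[{\dot{H}^{n/2}}]{u_0^{+}}^{2}-\norme[{\dot{H}^{n/2}}]{u_0^{-}}^{2}$ on the right, with $u_0^{\pm}=\mathbb Q_\pm u_0$. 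The determinant terms combine by the algebraic identity $\det(\rot u,u,\rot_+^{n}u)-\det(\rot u,u,\rot_-^{n}u)=\det(\rot u,u,\rot^{n}u)$ recorded just before the statement, a consequence of $\rot=|D|(\mathbb Q_+-\mathbb Q_-)$ and the block orthogonality of Lemma~\ref{lem13}, leaving the single integrand $\det(\rot u,u,\rot^{n}u)$. This is precisely the third identity; at $n=1$ the determinant vanishes, since $\det(\rot u,u,\rot u)=0$, and one recovers~\eqref{conservation_NpNm}.

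The only point calling for genuine care — and there is no real obstacle — is the rigorous justification of these formal steps for smooth solutions: one needs enough spatial decay (or the periodic setting) for $\rot_\pm^{2\theta}u(t)$ to be an admissible test field, for the integrations by parts behind $\Poscal{\rot^{2}u}{w}$ to be licit, and for $t\mapsto\norme[L^2]{\rot_\pm^{\theta}u(t)}^{2}$ to be $C^{1}$ with derivative $2\Poscal{\partial_t u}{\rot_\pm^{2\theta}u}$; all of this is routine in the regularity class in which the balance laws are stated, and the whole content of the statement is algebraic once~\eqref{eq:rot_theta} is in hand. Should one wish to run the $\dot{H}^{\theta}$ identity at a regularity beyond the known short-time smoothing of the data, one would first propagate that regularity forward in time, but the hypothesis that $u$ is smooth makes this step unnecessary here.
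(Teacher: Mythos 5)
Your proposal is correct and follows essentially the same route as the paper: test \eqref{nsi++} against $\rot_\pm^{2\theta}u$ to obtain \eqref{eq:rot_theta}, then add the two signed identities (via $\rot_+^{2\theta}+\rot_-^{2\theta}=|D|^{2\theta}\mathbb{P}$) for the $\dot{H}^{\theta}$ balance and subtract them for the $N^{n}_{\pm}$ balance. One caveat you inherit verbatim from the paper: the identification $\rot_+^{n}-\rot_-^{n}=\rot^{n}$ used in the subtraction step holds only for odd $n$, since for even $n$ the block orthogonality gives $\rot^{n}=\rot_+^{n}+\rot_-^{n}$ and the genuine difference is $\rot_+^{n}-\rot_-^{n}=|D|^{n-1}\rot$, so the third display is literally justified by this argument only for odd $n$.
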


The case $\theta=0$ is of special interest because, as for $\theta=1/2$, both critical determinants co\"incide.
\begin{prop}
If $u$ is a smooth solution of Navier-Stokes, one has the following balance laws:
\begin{equation}
\norme[L^2]{u^\pm(t)}^2 + 2\nu \int_0^t \norme[L^2]{\nabla u^\pm(t')}^2  dt'
\pm 2\int_0^t\int_{\R^{3}}\det(\rot u,u^-, u^+) dx dt' = 
\norme[L^2]{u^\pm_0}^2.
\end{equation}
In particular, the balance between the spin-definite components is ruled by:
\begin{equation}
N^0_+(u,t)-N^0_-(u,t) +4\int_0^t\int_{\R^{3}}\det(\rot u,u^-, u^+) dx dt'  = \norme[L^2]{u_0^+}^2-\norme[L^2]{u_0^-}^2
\end{equation}
\end{prop}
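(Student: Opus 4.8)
The plan is to run the general scheme of this subsection at the endpoint $\theta=0$, \ie to take $w=\rot_\pm^{0}u$. Since $\rot_\pm^{s}=|D|^s\mathbb{Q}_\pm$, the value $s=0$ gives $\rot_\pm^{0}=\mathbb{Q}_\pm$, so the test field is simply $w=u^\pm=\mathbb{Q}_\pm u$, which is divergence-free and hence admissible in the weak form of~\eqref{nsi++}. First I would pair~\eqref{nsi++} with $u^\pm$ and treat the three resulting contributions one at a time.

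For the time-derivative term, using that $\mathbb{Q}_\pm$ is a self-adjoint projection that commutes with $\p_t$, one gets $\poscal{\p_t u}{u^\pm}_{L^2}=\poscal{\mathbb{Q}_\pm\p_t u}{\mathbb{Q}_\pm u}_{L^2}=\tfrac12\tfrac{d}{dt}\norme[L^2]{u^\pm}^2$. For the viscous term, I would use $\rot^2=\rot_+^2+\rot_-^2$ (because $\rot_+\rot_-=\rot_-\rot_+=0$ by Lemma~\ref{lem13}) together with $\rot_\pm^2=|D|^2\mathbb{Q}_\pm$; since $\mathbb{Q}_+\mathbb{Q}_-=0$, $|D|$ commutes with $\mathbb{Q}_\pm$, and $\norme[L^2]{|D|v}=\norme[L^2]{\nabla v}$, this yields $\nu\poscal{\rot^2 u}{u^\pm}_{L^2}=\nu\norme[L^2]{\nabla u^\pm}^2$. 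For the nonlinear term, the fact that $u^\pm$ is divergence-free gives, via~\eqref{NL_weak_det}, $\poscal{\mathbb P(\rot u\times u)}{u^\pm}_{L^2}=\int_{\R^3}\det(\rot u,u,u^\pm)\,dx$; then, writing $u=u^++u^-$ and using that the determinant is alternating (so the $u^\pm$ column contributes nothing), this becomes $\int_{\R^3}\det(\rot u,u^\mp,u^\pm)\,dx$.

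Collecting the three pieces produces $\tfrac12\tfrac{d}{dt}\norme[L^2]{u^\pm}^2+\nu\norme[L^2]{\nabla u^\pm}^2+\int_{\R^3}\det(\rot u,u^\mp,u^\pm)\,dx=0$. Since $\det(\rot u,u^-,u^+)=-\det(\rot u,u^+,u^-)$, the two signs of $\pm$ give the same determinant up to an overall sign; integrating over $[0,t]$ and multiplying by $2$ gives exactly the stated pair of balance laws. Subtracting the $(-)$ identity from the $(+)$ identity and recalling the definition of $N^0_\pm$ introduced just above the statement yields the final identity, the factor $4$ arising because the two determinant contributions add rather than cancel.

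I do not expect a genuine obstacle here: the whole argument is a direct instance of the machinery of \S\ref{par:curl_signed} and of~\eqref{eq:rot_theta}. The only points that need a little care are the sign bookkeeping in the alternating determinant (which column drops out, and the swap $u^-\leftrightarrow u^+$), and making explicit that $\rot_\pm^{0}=\mathbb{Q}_\pm$ so that the $\theta=0$ case is literally the energy identity for the two spin-definite components rather than a degenerate one.
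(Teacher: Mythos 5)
Your proposal is correct and follows essentially the same route as the paper: test the weak form of~\eqref{nsi++} against $w=u^\pm=\mathbb{Q}_\pm u$, use the self-adjoint projection properties of $\mathbb{Q}_\pm$ for the linear terms, and reduce the nonlinear term via $\det(\rot u,u^++u^-,u^\pm)=\pm\det(\rot u,u^-,u^+)$ by dropping the repeated column. The only difference is that you spell out the time-derivative and viscous terms, which the paper leaves implicit; the sign bookkeeping and the factor $4$ in the subtracted identity are handled correctly.
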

\proof
Using $u^\pm$ as a test function, one has
$\det(\rot u,u, u^\pm) = \det(\rot u,u^+ + u^-, u^\pm) = \pm \det(\rot u, u^-, u^+)$.
\cqfd

\subsection{Applications}

Regularity on $[0,T]\times\R^3$ is assured when the following inequality holds:
\begin{equation}
\exists\theta\geq 1/2, \qquad \int_0^T\int_{\R^{3}}\det(\rot u,u,|D|^{2\theta} u) dx dt \geq0.
\end{equation}
Of course, giving sense to the previous integral requires some a priori  knowledge that the solution is smooth.
However, if the inequality is satisfied for some $\theta\geq 1/2$ along a sequence of, \eg[,] Galerkine approximations
that converge to a given Leray solution $u$, then $u$ enjoys a uniform bound in $L^\infty([0,T];\dot{H}^\theta)$ and
therefore, according to~\cite{GKP2016}, is smooth on $[0,T]$.
To avoid making an assumption on approximating sequences, one can require instead the slightly stronger
property on a general Leray solution:
\begin{equation}
\exists\theta\geq 1/2, \quad \operatorname{a.e.} t\in[0,T] \qquad \int_{\R^{3}}\det(\rot u,u,|D|^{2\theta} u) dx \geq0
\end{equation}
with $u_0\in H^\theta$.
Then one can proceed as in the proof of Theorem~\ref{lem21} and show that the first time of singularity cannot occur before $T$.

\subsubsection{Uniqueness criterion based on critical determinants}
In this section, we revisit the weak-strong uniqueness result and investigate how the associated
stability estimate can be expressed in a more geometric way.
We refer the reader to \cite{G2006} and the references therein for an in-depth discussion of weak-strong uniqueness for Navier-Stokes.

\bigskip
Let us consider two Leray solutions $u_j$ ($j=1,2$) of the incompressible Navier-Stokes equation~\eqref{nsi++} and
their difference $\delta=u_1-u_2$. Using the energy inequality for each, one gets
\[
\norm{\delta(t)}_{L^{2}}^{2}+2\nu\int_{0}^{t}\norm{ \nabla \delta}_{L^{2}}^{2}
\leq \norm{u_1(0)}_{L^2}^{2}+\norm{u_2(0)}_{L^2}^{2}
-2\left(\poscal{u_1(t)}{u_2(t)}_{L^2}
+2\nu \int_0^t \poscal{\nabla u_1}{\nabla u_2}_{L^2}\right).
\]
The standard argument in favor of weak-strong uniqueness
consists in observing that each equation tested against (a regularized version of)
the other field ultimately gives:
\[
\poscal{u_1(t)}{u_2(t)}_{L^2}
+2\nu \int_0^t \poscal{\nabla u_1}{\nabla u_2}_{L^2}
=\poscal{u_1(0)}{u_2(0)}_{L^2}-\int_0^t \poscal{(\delta\cdot\nabla) u_1}{\delta}_{L^2},
\]
which implies
\begin{equation}
\norm{\delta(t)}_{L^{2}}^{2}+2\nu\int_{0}^{t}\norm{ \nabla \delta}_{L^{2}}^{2}
\leq \norm{\delta(0)}_{L^2}^{2} + 2\int_0^t \poscal{(\delta\cdot\nabla) u_1}{\delta}_{L^2}
\end{equation}
and, with Gronwall's inequality:
\begin{equation}
\norm{\delta(t)}_{L^{2}}^{2} \leq \norm{\delta(0)}_{L^{2}}^{2} \exp\left(
\int_0^t \norme[L^\infty]{\nabla u_1 (t')}  dt'\right).
\end{equation}
This control is enough to ensure the uniqueness of all Leray solutions stemming from $u_1(0)$ as long as $u_1$ remains smooth.
It remains nonetheless  quite crude. 

\medskip
Instead, using~\eqref{NL_weak_det}, let us rewrite the crucial step in a more geometric way:
\[
\poscal{u_1(t)}{u_2(t)}_{L^2}
+2\nu \int_0^t \poscal{\nabla u_1}{\nabla u_2}_{L^2}
+\int_0^t \det(\rot u_1, u_1, u_2) + \det(\rot u_2, u_2, u_1)
=\poscal{u_1(0)}{u_2(0)}_{L^2}.
\]
Observe that
\[
\det(\rot u_1, u_1, u_2) + \det(\rot u_2, u_2, u_1) =\det(\rot\delta, u_1, u_2)
= (u_1\times u_2) \cdot \rot\delta.
\]
As $\div\delta=0$, one has $\norme[L^2]{\rot\delta} = \norme[L^2]{\nabla\delta}$ and one can completely absorb the offending derivative:
\begin{equation}
\norm{\delta(t)}_{L^{2}}^{2}
\leq \norm{\delta(0)}_{L^2}^{2} 
+ \frac{1}{2\nu} \int_0^t \norme[L^2]{u_1\times u_2}^2.
\end{equation}
In particular, we have the following statement.
\begin{thm}
If $u_1$ and $u_2$ are two Leray solutions such that
\begin{equation}\label{04}
\norm{u_1\times u_2}_{L^2}^2\le \gamma(t) \norm{u_1-u_2}_{L^2}^2
\quad\text{with}\quad \gamma\in L^{1}([0,T])
\end{equation}
then for any $t\in[0,T]$, one has
\begin{equation}\label{geomStability}
\norm{\delta(t)}_{L^{2}}^{2} \leq \norm{\delta(0)}_{L^{2}}^{2} \exp\left(
\int_0^T \gamma(t') dt'\right).
\end{equation}
\end{thm}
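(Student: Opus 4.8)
The plan is to obtain~\eqref{geomStability} by feeding the hypothesis~\eqref{04} into the geometric energy estimate established just above the statement and then invoking Gronwall's lemma. Concretely, I would take as a starting point the inequality
\[
\norm{\delta(t)}_{L^{2}}^{2}
\leq \norm{\delta(0)}_{L^{2}}^{2}
+ \frac{1}{2\nu}\int_0^t \norme[L^2]{u_1\times u_2}^2\,dt',
\]
whose derivation is recalled in the lines preceding the theorem: one subtracts from the sum of the two energy inequalities (for $u_1$ and for $u_2$) the cross identity obtained by testing each copy of~\eqref{nsi++} against a regularized version of the other solution, rewrites the nonlinear contribution via the determinant form~\eqref{NL_weak_det}, uses the antisymmetrization $\det(\rot u_1,u_1,u_2)+\det(\rot u_2,u_2,u_1)=\det(\rot\delta,u_1,u_2)=(u_1\times u_2)\cdot\rot\delta$, and finally absorbs the factor $\norme[L^2]{\rot\delta}=\norme[L^2]{\nabla\delta}$ (using $\div\delta=0$) into the viscous dissipation through Young's inequality $2ab\le \frac{1}{2\nu}a^2+2\nu b^2$.

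With this in hand the proof is short. Inserting~\eqref{04} into the estimate above turns it into the closed integral inequality
\[
\norm{\delta(t)}_{L^{2}}^{2}
\leq \norm{\delta(0)}_{L^{2}}^{2}
+ \frac{1}{2\nu}\int_0^t \gamma(t')\,\norm{\delta(t')}_{L^{2}}^{2}\,dt',
\]
and since $\gamma\geq 0$ lies in $L^1([0,T])$ and $t\mapsto\norm{\delta(t)}_{L^2}^2$ is bounded and measurable, the integral form of Gronwall's lemma yields $\norm{\delta(t)}_{L^{2}}^{2}\le\norm{\delta(0)}_{L^{2}}^{2}\exp\bigl(\frac{1}{2\nu}\int_0^t\gamma(t')\,dt'\bigr)\le\norm{\delta(0)}_{L^{2}}^{2}\exp\bigl(\int_0^T\gamma(t')\,dt'\bigr)$, which is~\eqref{geomStability} once the harmless viscosity factor $1/(2\nu)$ is absorbed into $\gamma$ (equivalently, once~\eqref{04} is read with that normalization).

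The genuinely delicate point is not the Gronwall step — routine once the inequality is in closed form — but everything upstream, namely the rigorous justification of the cross identity for $\poscal{u_1(t)}{u_2(t)}_{L^2}+2\nu\int_0^t\poscal{\nabla u_1}{\nabla u_2}_{L^2}$ at the level of Leray solutions; a priori each $u_j$ only lies in $L^\infty_tL^2_x\cap L^2_t\dot H^1_x$, so testing one equation against the other requires mollification in space and time, a passage to the limit, and control of the resulting commutators, which classically is clean only when one of the two solutions is a strong (e.g.\ Serrin-class) solution. I would also stress that~\eqref{04} is automatically satisfied in that regime: if, say, $u_1$ is smooth then $u_1\times u_2=-u_1\times\delta$ gives $\norme[L^2]{u_1\times u_2}^2\le\norme[L^\infty]{u_1}^2\norm{\delta}_{L^2}^2$, so one may take $\gamma(t)=\norme[L^\infty]{u_1(t)}^2\in L^1([0,T])$ and one recovers the classical weak--strong uniqueness statement — the present contribution being the geometrically transparent form of the Gronwall exponent in terms of the determinantal quantity $\norme[L^2]{u_1\times u_2}^2$.
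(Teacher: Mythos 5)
Your proposal is correct and follows essentially the same route as the paper: the geometric energy estimate $\norm{\delta(t)}_{L^{2}}^{2}\leq \norm{\delta(0)}_{L^2}^{2}+\frac{1}{2\nu}\int_0^t \norm{u_1\times u_2}_{L^2}^2$ obtained via the determinant identity $\det(\rot\delta,u_1,u_2)=(u_1\times u_2)\cdot\rot\delta$ and absorption of $\rot\delta$ into the dissipation, followed by insertion of the hypothesis and Gronwall. Your observations that the factor $1/(2\nu)$ must be absorbed into $\gamma$ for the stated exponent to come out exactly, and that the cross identity at the Leray level is the genuinely delicate step, are both accurate and consistent with how the paper presents the argument.
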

For example, if $u_1\in L^2_t L^\infty_x$ we can apply this result because  $u_1\times u_2 =  - u_1 \times \delta$ and
 we recover a well-known case of weak-strong uniqueness. However,
the geometric assumption~\eqref{04} is a priori weaker if, for example, the two fields tend to line up when one of them grows unbounded.

\subsubsection{A variant of BKM based on critical determinants}
Formally, the standard argument for the Beale-Kato-Majda criterion \cite{MR763762}, \cite{MR1953068}
consists in writing the equation for vorticity~\eqref{eq_vorticity}
in weak form against $\omega$ itself, which gives:
\[
\norm{\omega(t)}_{L^{2}}^{2}
+2\nu\int_{0}^{t}
\norm{\rot\omega}_{L^{2}}^{2}=\norm{\omega_{0}}^{2}
+2\int_{0}^{t} \poscal{(\omega\cdot \nabla)u}{\omega}_{L^2}
\]
and thus, in particular
\[
\norm{\omega(t)}_{L^{2}}^{2}
\leq \norm{\omega_{0}}^{2}
+2\int_{0}^{t} \norme[L^2]{\omega}^2 \norme[L^\infty]{\omega}.
\]
Combined with Gronwall lemma, this ensures that the solution (of either Euler or Navier-Stokes) remains smooth as long as
\begin{equation}\label{bkm}
\int_{0}^{T}\norm{\omega(t)}_{L^{\infty}} dt<+\io.
\end{equation}
Let us present a variant of this computation, inspired by the previous critical determinants.

\medskip
Our starting point is similar, but we write the non-linear term slightly differently:
\begin{equation*}
\norm{\omega(t)}_{L^{2}}^{2}
+2\nu\int_{0}^{t} \norm{\rot\omega}_{L^{2}}^{2}
+2\int_{0}^{t}\poscal{\omega\times u}{\rot\omega}_{L^{2}}
=\norm{\omega_{0}}_{L^2}^{2}.
\end{equation*}
Now, if one splits $\nu=\nu_{1}+\nu_{2}$ with arbitrary values $\nu_{j}>0$, one gets:
\[
\norm{\omega(t)}_{L^{2}}^{2}
+2\nu_{1}\int_{0}^{t}\norm{\rot \omega+\frac{1}{2\nu_{1}} (\omega\times u)}_{L^{2}}^{2} +
2\nu_{2}\int_{0}^{t}
\norm{\rot\omega}_{L^{2}}^{2}
=\frac{1}{2\nu_{1}}
\int_{0}^{t}\norm{\omega\times u}^{2}_{L^{2}} +\norm{\omega_{0}}^{2}_{L^2}.
\]
In particular, one obtains \textbf{an estimate that is now specific to Navier-Stokes}:
\begin{equation}\label{critSpecificNS}
\norm{\omega(t)}_{L^{2}}^{2}+
2\nu_{2}\int_{0}^{t} \norm{\rot\omega}_{L^{2}}^{2}
\le \norm{\omega_{0}}^{2}_{L^2}+\frac{1}{2\nu_{1}}
\int_{0}^{t}\norm{\omega\times u}^{2}_{L^{2}}.
\end{equation}
Consequently, as  $\norm{\omega\times u}^{2}_{L^{2}}\le \norm{\omega}^{2}_{L^{2}}
\norm{u}^{2}_{L^{\io}}$, Gronwall's lemma ensures the regularity of the flow on $[0,T]$ provided that
\begin{equation}
\int_{0}^{T}\norm{u(t)}^{2}_{L^{\infty}} dt<+\io.
\end{equation}
This condition is the endpoint of the Prodi-Serrin $L^{q}_{t}L^{p}_{x}$ family with $\frac{2}{q}+\frac{3}{p}=1$.

\medskip
Let us finally point out that an interesting connection between the Beale-Kato-Majda criterion
and the theory of turbulence was established by Cheskidov \& Shvydkoy \cite{CS2010b}, who showed
that a condition
\begin{equation}\label{CS_BKM_variant}
\int_0^T \norme[B^0_{\infty,\infty}]{\omega_{\leq Q(t)}(t)} dt <\infty
\end{equation}
ensures the regularity of the flow on $[0,T]$. The dynamic wave-number $2^{Q(t)}$
separates high-frequency modes where viscosity prevails over the non-linear term
from the low-frequency modes where the Euler dynamics is dominant. It is defined by
\begin{equation}
Q(t) = \min\left\{ q\in\N \,;\, \forall p>q, \enspace 2^{-p} \norme[L^\infty]{\Delta_p u} < c_0\nu\right\}.
\end{equation}
The constant $c_0>0$ is absolute. The operators $\Delta_p$ are the Littlewood-Paley projection on the $p$-th dyadic shell
and $\omega_{\leq Q}$ denotes the corresponding projection on the spectral ball of radius $2^Q$.
Using this criterion and a relation between the time-average of $2^{Q(t)}$ and Kolmogorov's dissipation wave-number,
the authors of~\cite{CS2010b} provide a strong analytical support to the fact that 
\textbf{most turbulent flows} (\ie even mildly intermittent ones) \textbf{are actually regular solutions} of Navier-Stokes.

\medskip
In retrospect, this last observation makes the denomination of \textit{turbulent solution} given by Leray \cite{MR1555394} to his weak
solutions a now unnecessarily confusing linguistic choice and it may be unwise to propagate it in the modern literature: mathematical singularities, if
they exist, will be violent events that are likely to be of turbulent nature;
however, most turbulent flows of practical interest for engineering purposes are smooth,
albeit less smooth (\eg in terms of analyticity radius) than the laminar flows, and only display quasi-singularities.
Of course, this remark does not intend to denigrate in any way the admirable work of Jean Leray,
who was greatly ahead of his era and whose entire life \cite{M99}, \cite{C2007} 
was a tribute to what  a great mind can achieve in adversity,
when it is moved by an unquenchable curiosity and a strong sense of humanism.

\appendix
\section{Appendix}\label{appendix}

In this appendix, we recall some well known facts that bridge the standard vector calculus with
its geometric foundations. We denote by $\langle\cdot,\cdot\rangle$ the canonical Euclidian scalar product of $\R^3$
and by $(\vv{\mathbf e_1},\vv{\mathbf e_2},\vv{\mathbf e_3})$ the canonical orthonormal basis.
For a comprehensive introduction to geometrical hydrodynamics, we refer the reader to Arnold's works~\cite{MR1345386}, \cite{MR1612569}.

\subsection{Some vector calculus formulas}
Let us start with the defining identity for the vector product in $\R^{3}$.
\begin{claim}
 Let $A,B,C$ be vectors in  $\R^{3}$.
 Then we have 
 \begin{equation}
\poscal{A\times B}{C}_{\R^{3}}=\det(A,B,C)
\end{equation}
In particular if $\mathcal R$ is a  $3\times 3$ matrix,  we have 
\begin{equation}
{}^{t}{\mathcal R}\bigl({\mathcal R}A\times{\mathcal R}B)=(\det \mathcal R) (A\times B).
\end{equation}
\end{claim}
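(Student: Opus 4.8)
The plan is to read the first identity as the coordinate-free \emph{definition} of the scalar triple product, and then to obtain the second line in one step from the multiplicativity of the determinant.

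First I would fix $A,B\in\R^3$ and observe that $C\mapsto \det(A,B,C)$ is a linear form on $\R^3$, hence of the form $C\mapsto \poscal{v}{C}_{\R^3}$ for a unique vector $v$. Testing against the canonical basis gives $v_i=\det(A,B,\vv{\mathbf e_i})$; expanding this $3\times3$ determinant along its last column yields $v_1=A_2B_3-A_3B_2$ together with its cyclic permutations, which is precisely $A\times B$. Hence $\poscal{A\times B}{C}_{\R^3}=\det(A,B,C)$ for every $C$.

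For the second identity I would apply the first one to the triple $(\mathcal R A,\mathcal R B,\mathcal R C)$: for all $C\in\R^3$,
\[
\poscal{{}^{t}\mathcal R\bigl(\mathcal R A\times\mathcal R B\bigr)}{C}_{\R^3}
=\poscal{\mathcal R A\times\mathcal R B}{\mathcal R C}_{\R^3}
=\det(\mathcal R A,\mathcal R B,\mathcal R C).
\]
Writing $M$ for the matrix whose columns are $A,B,C$, one has $\det(\mathcal R A,\mathcal R B,\mathcal R C)=\det(\mathcal R M)=(\det\mathcal R)(\det M)=(\det\mathcal R)\,\poscal{A\times B}{C}_{\R^3}$. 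Since $C$ is arbitrary, the vectors ${}^{t}\mathcal R(\mathcal R A\times\mathcal R B)$ and $(\det\mathcal R)(A\times B)$ must coincide.

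There is essentially no obstacle here: the proof uses only that a linear form on $\R^3$ is the inner product against a fixed vector, the elementary column expansion of a $3\times3$ determinant, and the multiplicativity $\det(\mathcal R M)=\det\mathcal R\,\det M$. The one point deserving a little care is to keep columns and rows straight, so that the adjunction $\poscal{\mathcal R x}{y}=\poscal{x}{{}^{t}\mathcal R y}$ is applied in the correct slot of the determinant.
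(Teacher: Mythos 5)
Your proof is correct. For the first identity you and the paper make dual reductions: the paper uses bilinearity and antisymmetry in $(A,B)$ to collapse everything to the single case $A=\vv{\mathbf e_{1}}$, $B=\vv{\mathbf e_{2}}$, whereas you use linearity in $C$ to represent the form $C\mapsto\det(A,B,C)$ by a vector and then read off its coordinates $v_i=\det(A,B,\vv{\mathbf e_i})$ by cofactor expansion. Both are one-line verifications once set up; the paper's reduction is the one it reuses verbatim in the next Claim (for $\det(A\times B,X,Y)$ and the triple cross-product), so it buys some uniformity, while yours has the small advantage of exhibiting $A\times B$ as \emph{the} Riesz representative of $\det(A,B,\cdot)$, which is exactly the viewpoint the appendix adopts later when it defines $u\times v=\iota_v\iota_u\omega_0$. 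For the second identity the paper gives no argument at all (it is stated as an immediate consequence), and your derivation via $\poscal{{}^{t}\mathcal R\,x}{C}=\poscal{x}{\mathcal R C}$ together with $\det(\mathcal R M)=\det\mathcal R\det M$ is the intended one and is carried out correctly, including the point you flag about applying the adjunction in the third slot only.
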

\begin{proof}Both sides are bilinear antisymmetric in $A,B$ thus one can reduce the identity to the sole
case~$A=\vv{\mathbf e_{1}}$ and~$B=\vv{\mathbf e_{2}}$, \ie
 $$
c_{3}=
\left\vert\begin{matrix}
 1&0&c_{1}\\0&1&c_{2}\\0&0&c_{3}
\end{matrix}\right\vert,
 $$
 which is obviously true.
\cqfd\end{proof}

\begin{claim}
Let $A,B,C,X,Y$ be vectors in $\R^{3}$. Then we have:
\begin{equation}
\det(A\times B, X, Y)=\poscal{A}{X}\poscal{B}{Y}-\poscal{B}{X}\poscal{A}{Y}
\end{equation}
and the triple cross-product formula:
 \begin{equation}\label{tvp}
(A\times B)\times C=\poscal{C}{A}B-\poscal{C}{B}A. 
\end{equation}
\end{claim}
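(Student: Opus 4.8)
The plan is to prove the two identities together, deriving the triple cross-product formula~\eqref{tvp} from the determinant identity. I would handle the determinant identity first, by the same reduction to basis vectors used in the preceding Claim. Both sides of $\det(A\times B,X,Y)=\poscal{A}{X}\poscal{B}{Y}-\poscal{B}{X}\poscal{A}{Y}$ are $4$-linear in $(A,B,X,Y)$ and change sign under the exchanges $A\leftrightarrow B$ and $X\leftrightarrow Y$ (on the left because $A\times B=-B\times A$ and the determinant is alternating; on the right by inspection). It therefore suffices to verify the identity when each of the four vectors is a canonical basis vector, and the two antisymmetries let one assume $A=\vv{\mathbf e_a}$, $B=\vv{\mathbf e_b}$ with $a<b$ and $X=\vv{\mathbf e_x}$, $Y=\vv{\mathbf e_y}$ with $x<y$, every other case reducing to $0=0$. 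Writing $\vv{\mathbf e_a}\times\vv{\mathbf e_b}=\sigma_{ab}\vv{\mathbf e_m}$ with $m$ the third index and $\sigma_{ab}\in\{\pm1\}$ the Levi-Civita sign, the left-hand side is $\sigma_{ab}\det(\vv{\mathbf e_m},\vv{\mathbf e_x},\vv{\mathbf e_y})$, which is nonzero exactly when $(x,y)=(a,b)$ and then equals $\sigma_{ab}^2=1$; the right-hand side equals $[x=a][y=b]-[x=b][y=a]$, whose second term vanishes because $x<y$ precludes $x=b>a=y$, so it too equals $[x=a][y=b]$. The two sides agree.

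To obtain~\eqref{tvp}, I would combine this with the preceding Claim (namely $\poscal{P\times Q}{R}=\det(P,Q,R)$): for every $Y\in\R^3$,
\[
\poscal{(A\times B)\times C}{Y}=\det(A\times B,C,Y)=\poscal{A}{C}\poscal{B}{Y}-\poscal{B}{C}\poscal{A}{Y}=\poscal{\poscal{C}{A}B-\poscal{C}{B}A}{Y},
\]
and since $Y$ is arbitrary the vectors $(A\times B)\times C$ and $\poscal{C}{A}B-\poscal{C}{B}A$ coincide. One could just as well run this in reverse, proving~\eqref{tvp} directly by the same basis reduction applied to $(A\times B)\times C$ and then recovering the determinant identity by pairing with $Y$; the two routes are equivalent.

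I do not anticipate a genuine obstacle, as everything here is elementary multilinear algebra. The only points requiring a little care are the bookkeeping of the signs $\sigma_{ab}$ in the basis reduction — which the Levi-Civita notation makes transparent — and the remark that a purely coordinate-free argument for~\eqref{tvp} (using that $(A\times B)\times C$ is orthogonal to $A\times B$, hence lies in $\operatorname{span}(A,B)$, and is orthogonal to $C$) still leaves the two scalar coefficients to be determined, so the reduction to basis vectors remains the most economical route.
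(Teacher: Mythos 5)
Your proof is correct and follows essentially the same route as the paper: multilinearity plus antisymmetry to reduce to canonical basis vectors, then a direct check. The only (harmless) divergence is that you deduce~\eqref{tvp} from the determinant identity by pairing with an arbitrary $Y$ and invoking the preceding Claim, whereas the paper verifies both identities independently by the same basis reduction (keeping $X,Y,C$ general and only specializing $A=\vv{\mathbf e_{1}}$, $B=\vv{\mathbf e_{2}}$); your fuller reduction of all four vectors with the Levi-Civita sign bookkeeping is, if anything, slightly more explicit than the paper's.
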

\begin{proof}
For each identity, both sides are bilinear antisymmetric in $A,B$
The formulas reduce respectively to 
\[
 \left\vert
\begin{matrix}
 0&x_{1}&y_{1}\\0&x_{2}&y_{2}\\1&x_{3}&y_{3}
\end{matrix}
\right\vert
=x_{1}y_{2}-x_{2}y_{1}
\qquad\text{and}\qquad
\mattre{0}{0}{1}\times \mattre{c_{1}}{c_{2}}{c_{3}}= \mattre{-c_{2}}{c_{1}}{0},
\]
which are obviously true.
\cqfd\end{proof}
\begin{rmk}Equation~\eqref{tvp} implies the Jacobi identity
\begin{equation}\label{jacobi}
(A\times B)\times C+(B\times C)\times A+(C\times A)\times B=0,
\end{equation}
since the left-hand side of \eqref{jacobi} is also
$\underbrace{\langle{C},{A}\rangle} B
\aunderbrace[l1r]{-\langle{C},{B}\rangle A}
+\langle{A},{B}\rangle C\underbrace{-\langle{A},{C}\rangle B  }
+\aunderbrace[l1r]{\langle{B},{C}\rangle A}-\langle{B},{A}\rangle C  =0$. 
\end{rmk}

\subsection{Some differential calculus formulas}

An orientation of $\R^{3}$ is a choice of a non-trivial $\omega_{0}$ in the 3rd exterior power $\Lambda^3 \R^3$, \ie
a non-degenerate alternating trilinear form on $\R^3$.
\begin{dfn}
Let $w$ be a one-form in $\R^{3}$. We define the vector field $\curl w$ by the identity
 \begin{equation}\label{curl01}
 \iota_{(\curl w)} \omega_{0} = 
%% (\curl w) \intprod \omega_{0} =
dw,
\end{equation}
where $\iota$ stands for the interior product.
\end{dfn}
\begin{rmk}
For $\omega_{0}=dx_{1}\wedge dx_{2}\wedge dx_{3}$ the interior product reads
\begin{equation}
%% X\intprod \omega_0 = 
\iota_X(\omega_0) = X_1 dx_2 \wedge dx_3
-X_2 dx_1 \wedge dx_3+X_3 dx_1 \wedge dx_2 
\end{equation}
and with $w=\sum w_{j} dx_{j}$ we recover the usual formula for the curl.
\end{rmk}
In particular, for a function $\alpha$, identifying a vector field $u$ to a one-form we find:
\begin{equation}
\curl (\alpha u)=\alpha\curl u+\nabla \alpha\times u.
\end{equation}
Next we investigate the $\curl$ of a general advection term and how these operators (do not) commute.
\begin{lemma}
 Let $u \in W^{1,p}_{\text{loc}}$ and  $v \in W^{2,p'}_{\text{loc}}$ be two vector fields on  $\R^{3}$
 for some $p\in [1,+\io]$. We have 
\begin{equation}\label{01}
\curl \bigl((u\cdot \nabla)v\bigr)=(u\cdot \nabla)\curl v
- ((\curl v)\cdot \nabla) u+(\dive u)(\curl v)+\sum_{1\le j\le 3}( \nabla u_{j}\times \nabla v_{j}).
\end{equation}
\end{lemma}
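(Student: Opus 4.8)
The plan is to avoid the brute-force expansion in coordinates, which for a general $v$ produces a huge number of terms with no visible structure, and instead to reduce everything to two facts already recorded in this appendix: the Leibniz rule $\curl(\alpha w)=\alpha\curl w+\nabla\alpha\times w$ for a scalar $\alpha$ and a vector field $w$, and the triple cross-product formula~\eqref{tvp}.

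First I would write $(u\cdot\nabla)v=\sum_{1\le j\le 3}u_j\,\partial_j v$ and apply $\curl$ to each summand with the scalar $\alpha=u_j$ and the vector field $w=\partial_j v$. Since $\curl(\partial_j v)=\partial_j(\curl v)$, the Leibniz rule gives, in the distributional sense,
\[
\curl\bigl((u\cdot\nabla)v\bigr)=(u\cdot\nabla)\curl v+\sum_{1\le j\le 3}\nabla u_j\times\partial_j v ,
\]
every product being locally integrable by H\"older under the stated hypotheses on $u,v$. The first term on the right is already the first term of~\eqref{01}, so it only remains to identify the sum $\sum_j\nabla u_j\times\partial_j v$.

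Next I would record the pointwise identity $\partial_j v=\nabla v_j+(\curl v)\times\vv{\mathbf e_j}$, which is immediate from the very definition of $\curl$ (equivalently, a one-line check using $\varepsilon_{abc}\varepsilon_{apq}=\delta_{bp}\delta_{cq}-\delta_{bq}\delta_{cp}$). Substituting it,
\[
\sum_{j}\nabla u_j\times\partial_j v=\sum_{j}\nabla u_j\times\nabla v_j+\sum_{j}\nabla u_j\times\bigl((\curl v)\times\vv{\mathbf e_j}\bigr),
\]
and the first sum on the right is precisely the last term of~\eqref{01}. For the second sum, I would apply~\eqref{tvp} to each term (writing $\nabla u_j\times(B\times C)=-(B\times C)\times\nabla u_j$ to match the orientation of~\eqref{tvp}), obtaining
\[
\nabla u_j\times\bigl((\curl v)\times\vv{\mathbf e_j}\bigr)=\poscal{\vv{\mathbf e_j}}{\nabla u_j}\,\curl v-\poscal{\curl v}{\nabla u_j}\,\vv{\mathbf e_j}=(\partial_j u_j)\,\curl v-\bigl((\curl v)\cdot\nabla u_j\bigr)\vv{\mathbf e_j}.
\]
Summing over $j$ and recognising $\sum_j\partial_j u_j=\dive u$ and $\sum_j\bigl((\curl v)\cdot\nabla u_j\bigr)\vv{\mathbf e_j}=\bigl((\curl v)\cdot\nabla\bigr)u$ (component by component) yields $(\dive u)\,\curl v-\bigl((\curl v)\cdot\nabla\bigr)u$. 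Assembling the three contributions reproduces~\eqref{01}.

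The computation has no genuine analytic obstacle; the only points that need care are bookkeeping ones: the sign incurred when permuting the factors of $\nabla u_j\times((\curl v)\times\vv{\mathbf e_j})$ so that~\eqref{tvp} applies, and the identification of the vector $\sum_j\bigl((\curl v)\cdot\nabla u_j\bigr)\vv{\mathbf e_j}$ with the advection term $\bigl((\curl v)\cdot\nabla\bigr)u$. The regularity assumptions $u\in W^{1,p}_{\mathrm{loc}}$, $v\in W^{2,p'}_{\mathrm{loc}}$ are exactly what makes every bilinear product above an element of $L^1_{\mathrm{loc}}$, so it suffices to justify the steps for smooth $u,v$ and then invoke density, or equally well to read each identity directly in $\mathcal D'(\R^3)$.
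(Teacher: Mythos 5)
Your proof is correct, but it takes a genuinely different route from the paper's. The paper deliberately avoids coordinates: it reads $(u\cdot\nabla)v$ as $\mathcal L_{u}(v)-v_{j}\,du_{j}$ for the one-form $v=v_{j}dx_{j}$, applies Cartan's formula together with the commutation of $\mathcal L_{u}$ with $d$, and then identifies the terms of \eqref{01} from $\mathcal L_{u}(\omega_{0})=(\dive u)\,\omega_{0}$, from $\iota_{[u,\curl v]}\omega_{0}$ (which produces the pair $(u\cdot\nabla)\curl v-((\curl v)\cdot\nabla)u$), and from $du_{j}\wedge dv_{j}=\iota_{(\nabla u_{j}\times\nabla v_{j})}\omega_{0}$. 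You stay entirely inside vector calculus: the Leibniz rule $\curl(u_{j}\partial_{j}v)=u_{j}\partial_{j}\curl v+\nabla u_{j}\times\partial_{j}v$ isolates the defect $\sum_{j}\nabla u_{j}\times\partial_{j}v$, and the splitting $\partial_{j}v=\nabla v_{j}+(\curl v)\times\vv{\mathbf e_{j}}$ (the antisymmetric part of the Jacobian of $v$ is its curl) combined with the triple-product formula \eqref{tvp} converts that defect into exactly the remaining three terms. I checked the sign incurred in $\nabla u_{j}\times((\curl v)\times\vv{\mathbf e_{j}})=(\partial_{j}u_{j})\curl v-((\curl v)\cdot\nabla u_{j})\vv{\mathbf e_{j}}$ and the identification $\sum_{j}((\curl v)\cdot\nabla u_{j})\vv{\mathbf e_{j}}=((\curl v)\cdot\nabla)u$; both are right. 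Your argument is more elementary and shows that the ``direct'' computation the paper dismisses as nightmarish becomes quite tame once the Jacobian of $v$ is factored through $\curl v$; what the paper's exterior-calculus proof buys is a structural explanation of each term (divergence of the volume form, Lie bracket, wedge of differentials) that transfers to other geometric settings. The one point to keep honest in your version is the distributional justification at the endpoints $p\in\{1,\infty\}$, where smooth fields are not norm-dense and one should mollify and pass to the limit in the $L^{p}\times L^{p'}$ pairing; the same caveat applies, implicitly, to the paper's formal computation.
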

\begin{proof}
We use a geometric approach because any direct attempt leads to nightmarish computations.
We consider $u$ as a vector and $v$ as a 1-form and use Einstein summation convention freely:
\[
u=u_{j}\frac{\p}{\p x_{j}}
\qquad\text{and}\qquad
v=v_{j}dx_{j}.
\]
With  $\omega_{0}=dx_{1}\wedge dx_{2}\wedge dx_{3}$, recall that $  \iota_{(\curl v)} \omega_{0}=dv$
\ie $\curl v$  is a vector  and  $dv$ is a 2-form.
The Lie derivative $\mathcal L_{u}$ is defined by Elie Cartan's Formula:
\begin{equation}\label{}
\mathcal L_{u}(\omega) = \iota_u d\omega + d(\iota_u \omega).
%% u \intprod d\omega+d(u \intprod \omega).
\end{equation}
The convective term can be expressed as a 1-form in the following way:
\[
(u\cdot \nabla)v=\mathcal L_{u}(v_{j}) dx_{j}=\mathcal L_{u}(v)-v_{j}\mathcal L_{u}(dx_{j})
=
\mathcal L_{u}(v)-v_{j}d( \iota_u dx_j)=\mathcal L_{u}(v)-v_{j}du_{j}.
\]
As the Lie derivative commutes with exterior differentiation, one gets:
\[
d\bigl((u\cdot \nabla)v\bigr)=\mathcal L_{u}(dv)+du_{j}\wedge dv_{j}.
\]
Proceeding by identification, one gets
\begin{align*}
\iota_{\curl\left((u\cdot \nabla)v\right)}\omega_{0}
&=
d\bigl((u\cdot \nabla)v\bigr)
=\mathcal L_{u}(\iota_{(\curl v)}\omega_{0})+du_{j}\wedge dv_{j}
\\
&=\iota_{(\curl v)}\mathcal L_{u}(\omega_{0})
+\iota_{\mathcal L_{u}(\curl v)} \omega_{0}
+du_{j}\wedge dv_{j}
\\
&= (\text{div}u) \iota_{\curl v}\omega_0 + \iota_{[u,\curl v]} \omega_0 + du_{j}\wedge dv_{j},
\end{align*}
providing  \eqref{01} since 
$du_{j}\wedge dv_{j}= \iota_{(\nabla u_{j}\times \nabla v_{j})} \omega_{0}$.
\cqfd\end{proof}

\bigskip\noindent
The geometrical reason that  gives the convection term its cross-product structure (see identity~\eqref{curlInProd} when $u=v$) is the following.
\begin{lemma}
 Let $u, v$ be vector fields in $\R^{3}$. Then we have 
 \begin{equation}\label{appendix_curlucrossv}
(u\cdot \nabla)v+(v\cdot \nabla )u=\nabla(u\cdot v)-u\times \curl v-v\times \curl u.
\end{equation}
\end{lemma}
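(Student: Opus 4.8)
The plan is to deduce~\eqref{appendix_curlucrossv} from the already-proven identity~\eqref{curlInProd} by a polarization argument. Rewriting~\eqref{curlInProd} as
\[
(u\cdot\nabla)u = \frac12\nabla\val u^2 - u\times\curl u,
\]
I would first note that, regarded as functions of the pair of vector fields $(u,v)$, both sides of~\eqref{appendix_curlucrossv} are symmetric and bilinear: the left-hand side manifestly so, and on the right-hand side $\nabla(u\cdot v)$ is a symmetric bilinear expression while $u\times\curl v + v\times\curl u$ is symmetric by construction and bilinear because $\curl$ is linear. A symmetric bilinear map is determined by its diagonal values through $B(u,v)=\frac12\bigl(Q(u+v)-Q(u)-Q(v)\bigr)$, so it suffices to see that~\eqref{appendix_curlucrossv} is precisely the polarization of the displayed diagonal identity.

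Concretely, I would apply the displayed formula with $u+v$ substituted for $u$ and expand everything: the left-hand side becomes $(u\cdot\nabla)u+(u\cdot\nabla)v+(v\cdot\nabla)u+(v\cdot\nabla)v$; on the right, $\val{u+v}^2=\val u^2+2\,u\cdot v+\val v^2$ and $(u+v)\times\curl(u+v)$ expands by bilinearity into $u\times\curl u+v\times\curl v+u\times\curl v+v\times\curl u$. Subtracting from the resulting equation the two diagonal identities, one for $u$ and one for $v$, cancels the terms $\frac12\nabla\val u^2$, $\frac12\nabla\val v^2$, $u\times\curl u$ and $v\times\curl v$, and what remains is exactly~\eqref{appendix_curlucrossv}.

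I do not expect any genuine difficulty here once~\eqref{curlInProd} is available; the only thing to watch is the routine bookkeeping of the expansion, in particular making sure the two diagonal contributions cancel cleanly against $(u\cdot\nabla)u$ and $(v\cdot\nabla)v$. If one instead wanted a self-contained derivation in the coordinate-free spirit of this appendix --- which is arguably the structural reason behind both~\eqref{curlInProd} and~\eqref{appendix_curlucrossv} --- one could start from the Koszul formula for the Levi-Civita connection of the flat metric on $\R^3$ and read off the identity by pairing against an arbitrary constant vector field; but the polarization route above is by far the shortest given what has already been established.
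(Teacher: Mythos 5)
Your polarization argument is correct: both sides of~\eqref{appendix_curlucrossv} are symmetric bilinear in $(u,v)$, the diagonal case $u=v$ is exactly~\eqref{curlInProd} (which the main text proves by a direct coordinate computation and circular permutation), and substituting $u+v$ and subtracting the two diagonal identities yields the claim with no leftover terms. This is, however, a genuinely different route from the paper's. The appendix proof is self-contained and coordinate-free: it identifies $u,v$ with one-forms, writes the convective terms via the Lie derivative and Cartan's formula $\mathcal L_{u}=\iota_u d+d\iota_u$, and reads off the identity from $d\tilde u=\iota_{\curl u}\omega_0$ and the definition $u\times v=\iota_v\iota_u\omega_0$ --- precisely because the stated purpose of the appendix is to exhibit the geometric (Koszul-formula) origin of the cross-product structure, of which~\eqref{curlInProd} is presented as a consequence. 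Your derivation is shorter and entirely elementary, but it runs the logical arrow the other way: it imports its justification from the coordinate proof of~\eqref{curlInProd} in the main text, so while it is not circular, it does not supply the ``profound geometric reason'' that the appendix is meant to record. As a verification of the formula it is perfectly adequate; as a replacement for the appendix proof it would lose the structural content, which you yourself note by pointing to the Koszul formula as the self-contained alternative.
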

\begin{proof}
We introduce $\tilde u=u_{j}dx_{j}$, $\tilde v=v_{j}dx_{j}$ the two one-forms associated to $u$ and $v$
and proceed as in the proof of the previous Lemma:
\begin{align*}
(u\cdot\nabla) \tilde v+(v\cdot\nabla)\tilde u 
&=\mathcal L_{u}(v_{j})dx_{j}+\mathcal L_{v}(u_{j})dx_{j}\\
&=\mathcal L_{u}(\tilde v)-v_{j}\mathcal L_{u}(dx_{j})
+\mathcal L_{v}(\tilde u)-u_{j}\mathcal L_{v}(dx_{j})
\\
&= \iota_u  d\tilde v +\aunderbrace[l1r]{d(\iota_u  \tilde v)-v_{j}du_{j}}
+\iota_v  d\tilde u +d(\iota_v  \tilde u)
\aunderbrace[l1r]{-u_{j}dv_{j}}
\\
&=d(\iota_v  \tilde u)
+ \iota_u  \iota_{\curl v} \omega_0
+ \iota_v   \iota_{\curl u} \omega_0
\end{align*}
In the last expression, we used~\eqref{curl01} to expand $d\tilde u$ and $d\tilde v$.
The three underlined terms cancel each other out because $\iota_u  \tilde v = u_j v_j$.
Recall that the cross-product $u\times v$ is defined as a 1-form by the identity
\begin{equation}
(u\times v)\cdot w=
\omega_{0}(u,v,w)\quad \ie \quad u\times v=
\iota_v \iota_u \omega_{0}.
\end{equation}
We thus get 
\[
(u\cdot\nabla) \tilde v+(v\cdot\nabla)\tilde u=\nabla(u\cdot v)+\curl v\times u+\curl u\times v,
\]
which is the sought result.
\cqfd\end{proof}

\small

\end{document}